\documentclass{amsart}
\usepackage{amssymb,latexsym,amsmath,amsfonts,amsthm,amscd,graphicx,url,color,hyperref,tikz}

\theoremstyle{plain}
\newtheorem*{mainthm}{Main Theorem}
\newtheorem{thm}{Theorem}[section]
\newtheorem{prop}[thm]{Proposition}
\newtheorem{cor}[thm]{Corollary}
\newtheorem{lemma}[thm]{Lemma}

\theoremstyle{definition}
\newtheorem{definition}[thm]{Definition}
\newtheorem{remark}[thm]{Remark}

\newtheorem{examples}[thm]{Examples}
\newtheorem{example}[thm]{Example}
\newtheorem{notation}[thm]{Notation}

\newcommand{\BiProj}{\mathrm{BiProj}}
\newcommand{\Tor}{\mathrm{Tor}}
\newcommand{\HS}{\mathrm{HS}}

\newcommand{\RI}{\mathcal{R}(I)}
\newcommand{\FIf}{\mathbb{K}[\mathbf{f}]}
\newcommand{\FIg}{\mathbb{K}[\mathbf{g}]}

\newcommand{\SR}{S_\mathcal{R}}
\newcommand{\SF}{S_\mathcal{F}}
\newcommand{\PiR}{\Pi_\mathcal{R}}
\newcommand{\PiF}{\Pi_\mathcal{F}}
\newcommand{\mI}{\mathcal{I}_\Delta}
\newcommand{\mJ}{\mathcal{J}}
\newcommand{\mK}{\mathcal{K}}
\newcommand{\mL}{\mathcal{L}}
\newcommand{\LL}{L_{\alpha,\beta,\gamma}}
\newcommand{\MM}{M_{\alpha,\beta,\gamma}}
\newcommand{\PP}{P_{\alpha,\beta,\gamma,\delta}}
\newcommand{\QQ}{Q_{\alpha,\beta,\gamma,\delta}}

\newcommand{\LM}{\mathrm{LM}}
\newcommand{\iin}{\mathrm{in}}
\newcommand{\mdeg}{\mathrm{mdeg}}
\newcommand{\sdeg}{\mathrm{sdeg}}
\newcommand{\ee}{\mathbf{e}}
\renewcommand{\S}{\mathrm{S}}
\newcommand{\ch}{\mathrm{char}}
\newcommand{\lcm}{\mathrm{lcm}}

\newcommand{\mat}{\mathbf{M}}
\newcommand{\matX}{\mathbf{X}}
\newcommand{\al}{\alpha}
\newcommand{\be}{\beta}
\newcommand{\ga}{\gamma}
\newcommand{\de}{\delta}
\newcommand{\ep}{\varepsilon}
\newcommand{\oa}{\overline{\alpha}}
\newcommand{\ob}{\overline{\beta}}
\newcommand{\og}{\overline{\gamma}}
\newcommand{\od}{\overline{\delta}}
\renewcommand{\oe}{\overline{\varepsilon}}

\newcommand{\Card}{\mathrm{Card}}
\newcommand{\length}{\mathrm{length}}
\newcommand{\fI}{\mathfrak{I}}
\newcommand{\fw}{\mathfrak{w}}
\newcommand{\fb}{\mathfrak{b}}

\newcommand{\ba}{\mathbf{a}}
\newcommand{\bb}{\mathbf{b}}

\newcommand{\bff}{\mathbf{f}}
\newcommand{\bg}{\mathbf{g}}

\newcommand{\bi}{\mathbf{i}}
\newcommand{\bj}{\mathbf{j}}
\newcommand{\bh}{\mathbf{h}}

\newcommand{\Supp}{\mathrm{Supp}}

\setcounter{MaxMatrixCols}{20}

\makeatletter
\newcommand{\xdashrightarrow}[2][]{\ext@arrow 0359\rightarrowfill@@{#1}{#2}}
\def\rightarrowfill@@{\arrowfill@@\relax\relbar\rightarrow}
\def\arrowfill@@#1#2#3#4{%
  $\m@th\thickmuskip0mu\medmuskip\thickmuskip\thinmuskip\thickmuskip
   \relax#4#1
   \xleaders\hbox{$#4#2$}\hfill
   #3$%
}
\makeatother

\bibliographystyle{amsplain}

\title{Blowup algebras of rational normal scrolls}
\author{Alessio Sammartano}
\address{Department of Mathematics, University of Notre Dame, 255 Hurley, Notre Dame, IN 46556, USA}
\email{asammart@nd.edu}

\subjclass[2010]{Primary: 13A30, 13C40; Secondary:   05E45, 13D02, 13P10, 14M12, 14J40}
\keywords{Rees ring; special fiber; determinantal ideal; variety of minimal degree; Gr\"obner basis; Koszul algebra;  simplicial complex; non-crossing sets; Catalan numbers} 

\begin{document}

\begin{abstract}
We determine the  equations of the blowup of $\mathbb{P}^{n}$
along a $d$-fold rational normal scroll  $\mathcal{S}$,
and we prove that the Rees ring and special fiber ring of $\mathcal{S}\subseteq\mathbb{P}^{n}$ are  Koszul algebras.
\end{abstract}

\maketitle

\section*{Introduction}

Let $R=\mathbb{K}[x_0,\ldots,x_n]$ be a polynomial ring over a field, 
$\bff=\{f_0, ..., f_s\}\subseteq R$  a set of forms of the same degree,
 and  $I = (\bff)\subseteq R$ the ideal generated by them.
 We consider the algebras
$
\RI =  R[f_0t, \ldots, f_st]\subseteq R[t],
$
known as the   \emph{Rees ring} of $I$,
and $\mathbb{K}[\bff] = \mathbb{K}[f_0, \ldots, f_s] \subseteq R$,
known as the \emph{special fiber ring} of $I$.
They are also known as \emph{blowup algebras} of $I$, 
since $\BiProj (\RI)$ is the  blowup of $\mathbb{P}^n$ along the subscheme defined by $I$.
Blowup algebras are central objects in the study of  rational maps \cite{DHS,KPU,RS,S},
  Castelnuovo-Mumford regularity   \cite{BCV,Ch,CHT,R}, geometric modeling \cite{Bu,C},
  and several other areas such as 
  linkage, residual intersections, multiplicity theory,
  integral dependence,
singularities, and  combinatorics.

A classical problem in algebra and geometry is the determination of the defining relations of  $\FIf$ and $\RI$.
It amounts to finding the implicit equations of graphs and images of rational maps in projective space,
and is therefore called the \emph{implicitization} \emph{problem}.
Unfortunately, this problem is known to be quite hard, 
and has been solved in a very short list of cases.
In the well known case of maximal minors of a generic matrix,
$\FIf$ is   the coordinate ring of a Grassmann variety and  is defined by the Pl\"ucker relations.
However, the relations among non-maximal minors  are still unknown \cite{BCV2}.
For symmetric matrices, 
a set of generators for the defining ideal of $\FIf$ when $\bff$ consists of all principal minors  is conjectured in \cite{HS},
 and established  in \cite{O} only up to radical.
The problem for Rees rings $\RI$ is even harder, and remains unsolved in all but a handful of cases, including
linearly presented ideals $I$ that are Cohen-Macaulay of codimension two \cite{MU} or Gorenstein of codimension three \cite{KPU2}.
It is even open for three-generated ideals  $I\subseteq\mathbb{K}[x_0,x_1]$ \cite{Bu}.

We focus on rational normal scrolls,
which are prominent  algebraic varieties, both in classical results and more recent developments \cite{BSV,CJ,EH,KPU1}.
Let $n_1, \ldots, n_d$ be positive integers and $c = \sum n_i$.
Choose rational normal curves $\mathcal{C}_i \subseteq \mathbb{P}^{c+d-1}$ of degree $n_i$ with complementary linear spans,
 and  isomorphisms $\varphi_i : \mathbb{P}^1 \rightarrow \mathcal{C}_i$ for each $i=1,\ldots, d$. 
The corresponding rational normal scroll  is
$$
\mathcal{S}_{n_1, \ldots, n_d} = \bigcup_{p \in \mathbb{P}^1} \overline{\varphi_1(p), \varphi_2(p), \ldots, \varphi_d(p)} \subseteq \mathbb{P}^{c+d-1}
$$
and is uniquely determined by $n_1, \ldots, n_d$ up to projective equivalence.
In suitable coordinates,
the  ideal of  $\mathcal{S}_{n_1, \ldots, n_d}\subseteq \mathbb{P}^{c+d-1}$ is generated by the minors of the matrix
$$
\left(
\begin{matrix}
x_{1,0}\! &\! x_{1,1}  \!&\! \cdots \!&\! x_{1,n_1-1} \\
x_{1,1} \!&\! x_{1,2} \! &\! \cdots \!&\! x_{1,n_1}
\end{matrix}
\right|
\left.
\begin{matrix}
x_{2,0}\! &\! x_{2,1}  \!&\! \cdots \!&\! x_{2,n_2-1} \\
x_{2,1} \!& \! x_{2,2} \! &\! \cdots \!&\! x_{2,n_2}
\end{matrix}
\right|
\left.
\begin{matrix}
 \cdots \, \\
  \cdots \,
\end{matrix}
\right|
\left.
\begin{matrix}
x_{d,0}\! & \!x_{d,1} \! &\! \cdots \!&\! x_{d,n_d-1} \\
x_{d,1} \!&\! x_{d,2}\!  & \!\cdots\! &\! x_{d,n_d}
\end{matrix}
\right).
$$

The implicitization problem 
was solved by Conca, Herzog, and Valla \cite{CHV} under the  assumption that the scroll is  \emph{balanced},
 i.e.  $| n_i - n_j | \leq 1$ for all $i,j$.
The authors construct degenerations  of $\RI$ and $\FIf$ to toric rings,
and obtain the  equations of the blowup algebras by lifting  the toric ideals.
They deduce that $\RI$ and $\FIf$ are Koszul algebras,
and by  \cite{B} it follows that  $I$ has linear powers, that is, the Castelnuovo-Mumford regularity of the powers of $I$ is the  least  possible. 
It has been an open question for a long time whether these strong results hold for all rational normal scrolls, 
in particular whether all scrolls have linear powers and whether there exists a Gr\"obner basis  of quadrics for the ideal of relations among the minors.
Recently, 
Bruns, Conca, and Varbaro \cite{BCV} proved  that all rational normal scrolls have linear powers.
Their method  is different from the one of \cite{CHV} though, and  the  equations of the blowup  remained unknown.

In this paper we solve the implicitization problem for rational normal scrolls in full generality, cf. the Main Theorem in Section \ref{SectionRelations}.
We determine the  equations of the blowup of $\mathbb{P}^{c+d-1}$ along  $\mathcal{S}_{n_1,\ldots,n_d}$ and of the special fiber 
in their natural embeddings in  $\mathbb{P}^{c+d-1} \times  \mathbb{P}^{{c-1 \choose 2}-1}$ and $ \mathbb{P}^{{c-1 \choose 2}-1}$ respectively.
We exhibit a new class of quadratic relations,
  arising from the vanishing of certain  determinants,
   which,
together with  the classical Pl\"ucker relations and the  relations of the symmetric algebra of $I$, suffice to generate the ideals of relations.
As a consequence, the blowup of $\mathbb{P}^n$ along any  variety of minimal degree is cut out  by quadric hypersurfaces.

In order to prove that these quadratic relations generate all the relations of the blowup algebras,
we introduce in Section \ref{SectionTermOrder} a new  construction of term orders.
In fact, 
one of the technical reasons why this problem has remained elusive in the last two decades is the failure of 
 traditional Gr\"obner  techniques, 
including those inspired by the theory of algebras with straightening laws.
Our term order  allows to construct   a well-behaved  subcomplex of the non-crossing complex
whose combinatorial patterns are governed by the Catalan trapezoids,
and which yields the expected Hilbert function for  the quadratic relations of $\FIf$,
cf. Section \ref{SectionFiber}.
Exploiting the results on $\FIf$ and the fiber type property of  $I$ \cite{BCV},
in Section \ref{SectionRees} 
we deduce the defining equations of $\RI$ and  show  that they form a squarefree quadratic Gr\"obner basis.
We adopt a mixed strategy to prove this result:
we bypass the explicit computation of the majority of S-pairs by reducing to a finite set of Hilbert series equations, which are verified with a computer-assisted calculation,
whereas we exploit some   syzygies of the defining ideal of $\RI$ to prove that the remaining S-pairs reduce to 0.
In particular, we prove that $\RI$ and $\FIf$ are Koszul algebras.

\section{Relations among minors}\label{SectionRelations}

In this section we use the standard presentation of the ideal of a rational normal scroll to  describe the equations of its blowup algebras  in detail, and state our main results.
We refer to \cite{BV,MS} for generalities on determinantal ideals.

From now on we assume, without loss of generality, that $n_1\leq n_2 \leq \cdots \leq n_d$. 
For each $i=1,\ldots,d$ let $X_i = \{x_{i,0}, \ldots, x_{i,n_i}\}$ be disjoint sets of variables and $R = \mathbb{K}[x_{i,j}]$ 
 a polynomial ring in the union of the $ X_i$ over an arbitrary field $\mathbb{K}$.
 Consider the matrix 
$$
\matX =
\left(
\begin{matrix}
x_{1,0} \! & \! x_{1,1}\!  & \!\cdots \!& \! x_{1,n_1-1} \\
x_{1,1} \! & \! x_{1,2}  \!&\! \cdots \!&\! x_{1,n_1}
\end{matrix}
\right|
\left.
\begin{matrix}
x_{2,0}\! & \! x_{2,1}\!  &\! \cdots \!&\! x_{2,n_2-1} \\
x_{2,1} \! &\!  x_{2,2}  \!& \!\cdots \!& \! x_{2,n_2}
\end{matrix}
\right|
\left.
\begin{matrix}
\, \cdots \, \\
 \, \cdots \,
\end{matrix}
\right|
\left.
\begin{matrix}
x_{d,0} \!&\! x_{d,1} \! &\! \cdots \!& \!x_{d,n_d-1} \\
x_{d,1} \!&\! x_{d,2} \! & \!\cdots \!&\! x_{d,n_d}
\end{matrix}
\right)
$$
and denote by $\xi_{i,j}$ its $(i,j)$-entry.
The  homogeneous ideal of  $\mathcal{S}_{n_1,\ldots,n_d}$ is $I= (\bff)\subseteq R$ where 
$\bff = \{f_{\al,\be}\,|\, 1 \leq \al<\be \leq c\}$, $f_{\al,\be}=\xi_{1,\al}\xi_{2,\be}-\xi_{1,\be}\xi_{2,\al}$, and $c = \sum n_i$.

Let $t$ be a new variable.
The Rees ring  $\RI =  R[It]\subseteq R[t]$ is bigraded  by letting $\deg x_{i,j} = (1,0)$ and $\deg t = (-2,1)$.
In this way $\RI$ is a standard bigraded $\mathbb{K}$-algebra in the sense that it is generated in bidegrees $(1,0)$ and $(0,1)$.
The special fiber ring $ \RI \otimes_R \mathbb{K} \cong \FIf  \subseteq R$ can be identified with the subring of $\RI$ concentrated in bidegrees $(0,\ast)$, 
and is a standard graded $\mathbb{K}$-algebra.
Introduce new  variables  $\{Y_{\al,\be} \, |\, 1\leq \al < \be \leq c\}$
and define a bihomogenoeus presentation
$$
\Psi_\mathcal{R} :  R[Y_{\al,\be}] \twoheadrightarrow \RI,  \qquad Y_{\al,\be}\mapsto tf_{\al,\be},\quad x_{i,j}\mapsto x_{i,j},
$$
and the induced homogeneous presentation
$$
\Psi_\mathcal{F} : \mathbb{K}[Y_{\al,\be}] \twoheadrightarrow \FIf, \qquad Y_{\al,\be}\mapsto f_{\al,\be}.
$$
The ideal $\ker \Psi_\mathcal{R}$ is bigraded, while $\ker \Psi_\mathcal{F}$ is the graded ideal generated by the elements of $\ker \Psi_\mathcal{R}$ of bidegree $(0,\ast)$, in other words
$\ker \Psi_\mathcal{F}=\ker \Psi_\mathcal{R} \cap \mathbb{K}[Y_{\al,\be}]$.

\begin{notation}
For a positive integer $n$ we set $[n]=\{1,\ldots, n\}$.
Given $\Gamma \subseteq [c]$ and  $k\in \mathbb{N}$
we denote by ${ \Gamma \choose k}$ the set of strictly increasing $k$-tuples  of elements of $\Gamma$.
\end{notation}

Some relations of the Rees ring derive from the syzygies of $I$.
The resolution of the ideal of maximal minors of the matrix $\matX$ is an Eagon-Nortchott complex.
In our  case, we can describe the first syzygies as follows:
for any $(\al,\be,\ga)\in {[c] \choose 3}$ the following determinants vanish
$$
\left|
\begin{matrix}
\xi_{1,\al} & \xi_{1,\be} & \xi_{1,\ga}  \\
\xi_{1,\al} & \xi_{1,\be} & \xi_{1,\ga}  \\
\xi_{2,\al} & \xi_{2,\be} & \xi_{2,\ga}  
\end{matrix}
\right|
=
\left|
\begin{matrix}
\xi_{1,\al} & \xi_{1,\be} & \xi_{1,\ga}  \\
\xi_{2,\al} & \xi_{2,\be} & \xi_{2,\ga}  \\
\xi_{2,\al} & \xi_{2,\be} & \xi_{2,\ga}  
\end{matrix}
\right|
= 0.
$$
Expanding the  determinants along  the first and third row respectively  gives rise to 
\begin{equation}\label{LinearRelationsX}
\xi_{1,\al}Y_{\be,\ga}-\xi_{1,\be}Y_{\al,\ga}+\xi_{1,\ga}Y_{\al,\be}, \,\,
\xi_{2,\al}Y_{\be,\ga}-\xi_{2,\be}Y_{\al,\ga}+\xi_{2,\ga}Y_{\al,\be} \in \ker \Psi_\mathcal{R}.
\end{equation}

The Pl\"ucker relations are 
a well-known  set of relations among maximal minors.  
For a $2\times c$ matrix they take the form 
\begin{equation}\label{PluckerRelationsX}
Y_{\al,\be}Y_{\ga,\de} - Y_{\al,\ga}Y_{\be,\de} + Y_{\al,\de}Y_{\be,\ga}\in \ker \Psi_\mathcal{R}, \qquad (\al,\be,\ga,\de)\in {[c] \choose 4}.
\end{equation}

Next, we describe a new set of relations among the minors of $\matX$.
For any four columns avoiding the terminal column of each catalecticant block, 
that is, for any
$(\al,\be,\ga,\de)\in { \Gamma\choose 4}$ with $\Gamma = [c]\setminus  \{n_1, n_1+n_2, \ldots, \sum n_i\}$,
the following  determinant vanishes 
$$
\left|
\begin{matrix}
\xi_{1,\al} & \xi_{1,\be} & \xi_{1,\ga} & \xi_{1,\de} \\
\xi_{2,\al} & \xi_{2,\be} & \xi_{2,\ga} & \xi_{2,\de} \\
\xi_{1,\al+1} & \xi_{1,\be+1} & \xi_{1,\ga+1} & \xi_{1,\de+1} \\
\xi_{2,\al+1} & \xi_{2,\be+1} & \xi_{2,\ga+1} & \xi_{2,\de+1}
\end{matrix}
\right|
= 0.
$$
Expanding the determinant along the first two rows
we obtain the quadratic relation 
\begin{equation}\label{NonPluckerRelationsX}
Y_{\al,\be}Y_{\ga+1,\de+1} - Y_{\al,\ga}Y_{\be+1,\de+1} + Y_{\al,\de}Y_{\be+1,\ga+1}
\end{equation}
\begin{equation*}
+Y_{\be,\ga}Y_{\al+1,\de+1} - Y_{\be,\de}Y_{\al+1,\ga+1} + Y_{\ga,\de}Y_{\al+1,\be+1} \in \ker \Psi_\mathcal{R}.
\end{equation*}

We will prove that these polynomials suffice to generate  $\ker \Psi_\mathcal{R}$ and $\ker \Psi_\mathcal{F}$.

\begin{mainthm}\label{MainTheorem}
The defining ideal 
of $\RI$ is minimally generated by the polynomials \eqref{LinearRelationsX}, \eqref{PluckerRelationsX}, and \eqref{NonPluckerRelationsX},
whereas
the defining ideal  
of $\FIf$ is minimally generated by the polynomials \eqref{PluckerRelationsX} and \eqref{NonPluckerRelationsX}.
Moreover, these generating sets are  Gr\"obner bases with respect to  suitable term orders.
\end{mainthm}

\section{Term order}\label{SectionTermOrder}

In this section we introduce a more involved presentation of  the ideal $I$ and use it to construct a term order for the defining ideal of $\RI$.
Both the new presentation and the term order play important roles in the proofs of the main results in Sections \ref{SectionFiber} and \ref{SectionRees}.
To the best of our knowledge,  this presentation of the rational normal scrolls and the term order on the algebraic relations among the minors have not been considered before, and may be of use in  other problems on determinantal varieties.

We consider another  matrix $\mat$ obtained by rearranging the columns of  $\matX$.
Let  $i(\ell)$ denote the least integer  $i$ such that $n_i\geq \ell$.
The first $c-d$ columns of $\mat$ are
$$ 
\left.
\begin{matrix}
x_{i(2),0}\! &\! x_{i(2)+1,0}\!  &\! \cdots \!&\! x_{d,0} \!&\! x_{i(3),1} \!&\! x_{i(3)+1,1}  \!&\! \cdots \!&\! x_{d,1} \!&\! x_{i(4),2} \!&\! \cdots \!&\! \cdots \!&\! x_{d,n_{d}-2}\\
x_{i(2),1}\! &\! x_{i(2)+1,1}\!  &\! \cdots \!&\! x_{d,1} \!&\! x_{i(3),2} \!&\! x_{i(3)+1,2} \!&\! \cdots \!&\! x_{d,2} \!&\! x_{i(4),3} \!&\! \cdots \!&\! \cdots \!&\! x_{d,n_{d}-1}
\end{matrix}
\right..
$$
In other words, 
we start with the first column of the $i$-th catalecticant block of $\matX$ for each $i$ increasingly in  $i$, 
then the second column, and so on until we have used all columns except the last one for each block;
when a block runs out of columns  we simply skip it. 
The last $d$ columns of $\mat$ are
$$ 
\left.
\begin{matrix}
x_{d,n_d-1} & x_{d-1,n_{d-1}-1}  & \cdots & x_{2,n_2-1} & x_{1,n_1-1}\\
x_{d,n_d} & x_{d-1,n_{d-1}}  & \cdots & x_{2,n_2} & x_{1,n_1} 
\end{matrix}
\right.
$$
i.e.,
they consist of the last column of the $i$-th block of $\matX$ for each $i$, 
but this time ordered decreasingly in  $i$.

\begin{examples}
We illustrate the construction of the matrix $\mat$ with several examples. Let $\mathbf{n}=(n_1, \ldots, n_d)$.
\begin{align*}
\!\mathbf{n}\!&=\!(4,4) \!
&&
\!\mat \!=\!
\left(
\begin{matrix}
x_{1,0}\! &\! x_{2,0} \! &\! x_{1,1}\! &\! x_{2,1} \! &\! x_{1,2} \!&\! x_{2,2}  \!&\! x_{2,3} \!& \! x_{1,3}   \\
x_{1,1}\! &\! x_{2,1}  \!&\! x_{1,2} \!&\! x_{2,2}  \!&\! x_{1,3} \!&\! x_{2,3}  \!&\! x_{2,4} \!&\!  x_{1,4}
\end{matrix}
\right),
\\
\!\mathbf{n}\!&=\!(3,3,4) \!
&&
\!\mat \!=\!
\left(
\begin{matrix}
x_{1,0}\! &\! x_{2,0} \! &\! x_{3,0} \!&\! x_{1,1} \! &\! x_{2,1} \!&\! x_{3,1}  \!&\! x_{3,2} \!&\! x_{3,3} \! &\! x_{2,2}\! &\! x_{1,2}   \\
x_{1,1}\! &\! x_{2,1}  \!&\! x_{3,1} \!&\! x_{1,2}  \!&\! x_{2,2} \!&\! x_{3,2}  \!&\! x_{3,3} \!&\! x_{3,4} \! &\! x_{2,3}\! &\! x_{1,3}
\end{matrix}
\right),
\\
\!\mathbf{n}\!&=\! (1,1,2,3,4) \!
&&
\!\mat \!= \! \left(
\begin{matrix}
x_{3,0}\! &\! x_{4,0}  \!&\! x_{5,0}  \! &\! x_{4,1} \!&\! x_{5,1} \! &\! x_{5,2} \!&\! x_{5,3} \! &\! x_{4,2} \!&\! x_{3,1} \!&\! x_{2,0} \!&\! x_{1,0}   \\
x_{3,1}\! &\! x_{4,1} \! &\! x_{5,1}   \!&\! x_{4,2} \!&\! x_{5,2} \! &\! x_{5,3} \!&\! x_{5,4} \! &\! x_{4,3} \!&\! x_{3,2} \!&\! x_{2,1} \!&\! x_{1,1}
\end{matrix}
\right),
\\
\!\mathbf{n}\!&=\! (1,1,\ldots,1) \!
&&
\!\mat\! =\!\left(
\begin{matrix}
x_{d,0} \!&\! x_{d-1,0} \!&\!x_{d-2,0} \!&\! \cdots \!&\! x_{2,0} \!&\! x_{1,0} \\
x_{d,1} \!&\! x_{d-1,1} \!&\!x_{d-2,1} \!&\! \cdots \!&\! x_{2,1} \!&\! x_{1,1}
\end{matrix}
\right),
\\
\!\mathbf{n}\!&=\! (n_1) \!
&&
\!\mat \!=\!
\left(
\begin{matrix}
x_{1,0} \!&\! x_{1,1} \! &\! x_{1,2} \! & \!\cdots \!&\! x_{1,n_1-1} \\
x_{1,1} \!&\! x_{1,2} \! &\! x_{1,3} \! &\! \cdots \!&\! x_{1,n_1} 
\end{matrix}
\right).
\end{align*}
\end{examples}

Denote by $\mu_{i,j}$ the $(i,j)$-entry of $\mat$ and by 
$g_{\al,\be} = \mu_{1,\al}\mu_{2,\be}-\mu_{1,\be}\mu_{2,\al}$ its $2\times 2$ minors, with $\al<\be$.
Thus  $I=(\bg)\subseteq R$ where 
$\bg = \big\{g_{\al,\be} \, |\, (\al,\be)\in {[c] \choose 2}\big\}$.
We let $\big\{T_{\al,\be} \, |\,  (\al,\be)\in {[c] \choose 2}\big\}$ be new variables and
define  (bi)homogeneous presentations for the Rees ring and the special fiber ring of $I$
\begin{align*}
\PiR &: \SR := R[T_{\al,\be}] \twoheadrightarrow \RI, &&T_{\al,\be}\mapsto tg_{\al,\be}, \qquad x_{i,j}\mapsto x_{i,j},\\
\PiF &: \SF := \mathbb{K}[T_{\al,\be}] \twoheadrightarrow \FIg,  &&T_{\al,\be}\mapsto g_{\al,\be}.
\end{align*}
Then $\SR$ is a standard bigraded polynomial ring by setting $\deg x_{i,j} = (1,0)$ and $\deg T_{\al,\be} = (0,1)$.
We denote the defining ideal of $\RI$ by $\mJ = \ker \PiR$ and that of $\FIg$ by $\mK = \ker \PiF$.
We also consider the defining ideal $\mL$ of the symmetric algebra of $I$, 
that is, the sub-ideal of $\mJ$ generated by the elements of bidegree $(\ast, 1)$.

\begin{notation}\label{NotationIndicesVariables}
We  adopt the  conventions $T_{\al,\be}:=-T_{\be,\al}$ if $\al>\be$, and $T_{\al,\al}:=0$.
\end{notation}

Since the matrices $\matX$  and $\mat$ differ  by a permutation,
the isomorphism $\Theta$ between the two presentation rings of $\RI$ 
such that 
 $\Psi_\mathcal{R}=\PiR \circ \Theta  $ is defined  
 by a relabeling of the variables with some multiplication by $-1$;
 likewise for $\FIg$.
Specifically, let $\tau$ be the permutation of $\{1,\ldots,c\}$ that we apply to the columns of $\matX$ to obtain $\mat$.
The isomorphism $\Theta$ is given by the correspondence
$ Y_{\al,\be}\mapsto  T_{\tau(\al),\tau(\be)}$
for all $\al<\be$;
note that there is a $-1$ whenever $\tau(\al)>\tau(\be)$. 

Now we translate the polynomial relations \eqref{LinearRelationsX}, \eqref{PluckerRelationsX}, \eqref{NonPluckerRelationsX} to this new presentation.
The isomorphism $\Theta$ preserves the form of  
the  syzygies and the Pl\"ucker equations,
i.e.
the ideal  $\mL$ is generated by the polynomials 
\begin{eqnarray}
\LL &:= \mu_{1,\al}T_{\be,\ga}-\mu_{1,\be}T_{\al,\ga}+\mu_{1,\ga}T_{\al,\be},\label{equationL}\\
\MM &:= \mu_{2,\al}T_{\be,\ga}-\mu_{2,\be}T_{\al,\ga}+\mu_{2,\ga}T_{\al,\be},\label{equationM}
\end{eqnarray}
for    $ (\al,\be,\ga)\in{[c] \choose 3}$,
while the Pl\"ucker relations are
\begin{equation}\label{equationP}
\PP :=
T_{\al,\be}T_{\ga,\de} - T_{\al,\ga}T_{\be,\de} + T_{\al,\de}T_{\be,\ga}
\end{equation}
for    $ (\al,\be,\ga,\de)\in{[c] \choose 4}$.
However, we need a notation for  relations \eqref{NonPluckerRelationsX}.

\begin{notation}\label{NotationNextIndex}
For $\al\leq c-d$
define $\oa := \tau(\tau^{-1}(\al)+1)$, that is, the next column in $\mat$ involving  the same set of variables $X_i$ as $\al$.
\end{notation}
\noindent
The relations \eqref{NonPluckerRelationsX}  correspond then to 
\begin{equation}\label{equationQ}
\QQ :=
T_{\al,\be}T_{\og,\od} - T_{\al,\ga}T_{\ob,\od} + T_{\al,\de}T_{\ob,\og}
+T_{\be,\ga}T_{\oa,\od} - T_{\be,\de}T_{\oa,\og} + T_{\ga,\de}T_{\oa,\ob}
\end{equation}
and the condition on the four indices becomes
$ (\al,\be,\ga,\de)\in{[c-d]\choose 4}$.

\begin{example}
Let $\mathbf{n}=(1,2,2,3)$. The corresponding matrix is
$$
\mat = \left(
\begin{matrix}
x_{2,0} & x_{3,0}  & x_{4,0}  &x_{4,1} & x_{4,2}  & x_{3,1} & x_{2,1} & x_{1,0}   \\
x_{2,1} & x_{3,1}  & x_{4,1} &x_{4,2} & x_{4,3} & x_{3,2} & x_{2,2} & x_{1,1}
\end{matrix}
\right).
$$
There is exactly one relation \eqref{equationQ}, namely 
\begin{align*}
Q_{1,2,3,4} &= 
T_{1,2}T_{\overline{3},\overline{4}} - T_{1,3}T_{\overline{2},\overline{4}} + T_{1,4}T_{\overline{2},\overline{3}}
+T_{2,3}T_{\overline{1},\overline{4}}- T_{2,4}T_{\overline{1},\overline{3}} + T_{3,4}T_{\overline{1},\overline{2}}\\
&=
T_{1,2}T_{4,5} + T_{1,3}T_{5,6} - T_{1,4}T_{4,6}
-T_{2,3}T_{5,7}+ T_{2,4}T_{4,7} - T_{3,4}T_{6,7}.
\end{align*}
\end{example}

\begin{remark}\label{RemarkProofMainTheorem}
Since the isomorphism $\Theta$ is defined by a relabeling of  variables and changes of sign, it allows transferring not just  relations but term orders too. 
We will prove the Main Theorem using the presentations $\PiR,\PiF$ and the equations \eqref{equationL}, \eqref{equationM}, \eqref{equationP}, \eqref{equationQ}.
\end{remark}

Next, we  define the term order on  the presentation rings $\SR$ and $\SF$.
The construction is obtained by refining a term order using ordered monoids,
a generalization of the refinement  by a non-negative weight $\omega$ in \cite{St}.

\begin{definition}\label{DefinitionTermOrder}
We establish the following total order on the variables of $\SR$:
\begin{itemize}

\item 
 $ T_{\al,\be}\succ T_{\ga,\de}$ if $\al<\ga$ or $\al=\ga, \be<\de $;

\item $x_{i,j}\succ x_{k,\ell}$ if $j< n_i, \ell<n_k$  and either $j<\ell$ or $j=\ell,i<k $;

\item $x_{i,j}\succ x_{k,n_k}$ if  $ j<n_i$ or $j = n_i, i>k$;

\item 
 $ x_{i,j}\succ T_{\al,\be}$ for all $i,j,\al,\be $.

\end{itemize}
Then, consider the induced lexicographic term order on $\SR$.
We  refine this term order using the $\mathbb{Z}$-grading on $\SR$ defined by
$
\sdeg(T_{\al,\be}) = c-\al, \, \sdeg(x_{i,j})=0
$
for all $\al,\be,i,j$.
We further refine by a multigrading $\mdeg(\cdot)$ defined as follows.
Consider the semigroup $(\mathbb{N}^{c+d},+)$ with canonical basis $\ee_1 > \cdots > \ee_{c+d}$ and totally ordered by the lexicographic order. 
Set
$$
\mdeg(T_{\al,\be}) = \ee_{\al+d} + \ee_{\be+d}, \quad
\mdeg(\mu_{2,\ga}) = \ee_{\ga+d}, \quad
\mdeg(x_{i,0}) = \ee_{i} 
$$
The resulting term order is our desired $\prec$. 
On the subring $\SF\subseteq \SR$ we consider the restriction of this term order, which we also denote by $\prec$.
\end{definition}

\begin{remark}\label{RemarkFirstRow}
The total order on  the  variables $x_{i,j}$ is obtained by starting with  $x_{1,0}\succ x_{2,0}\succ \cdots \succ x_{d,0}$ and then appending the remaining variables with the order they appear on the second row of $\mat$. 
On the other hand, on the first row of $\mat$
we have
$
\mu_{1,1}\succ \mu_{1,2}\succ \cdots \succ \mu_{1,c-d} \succ \mu_{1,c-d+1} \prec  \mu_{1,c-d+2} \prec \cdots \prec \mu_{1,c}.
$
\end{remark}

\begin{remark}\label{RemarkOrderVariablesMultidegree}
For any $\al, \be \in [c]$ we have $\mu_{1,\al} \succ \mu_{1,\be}$ if and only if $\mdeg(\mu_{1,\al}) > \mdeg(\mu_{1,\be})$ in the lexicographic order on $\mathbb{N}^{c+d}$.
\end{remark}

Since $\prec$ is the only term order we consider in this paper, we denote leading monomials simply by $\LM(\cdot)$.

\begin{prop}\label{PropLeadingTerms}
The polynomials \eqref{equationM}, \eqref{equationP}, \eqref{equationQ} have leading monomials
$$
\LM(\PP) = T_{\al,\ga}T_{\be,\de}, 
\quad 
\LM(\QQ) = T_{\al,\be}T_{\og,\od},
\quad 
\LM(\MM) = \mu_{2,\be}T_{\al,\ga}.
$$
The leading monomial $ \LM(\LL)$  of \eqref{equationL} is the monomial
 containing the  $x_{i,j}$ with lowest $j$ and then lowest $i$.
\end{prop}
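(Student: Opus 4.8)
The plan is to compute each leading monomial directly from Definition \ref{DefinitionTermOrder}, working through the three refinement layers ($\sdeg$, then $\mdeg$, then the lexicographic order on variables) in that order, and stopping as soon as one layer separates the monomials in question. For the Pl\"ucker relation $\PP = T_{\al,\be}T_{\ga,\de} - T_{\al,\ga}T_{\be,\de} + T_{\al,\de}T_{\be,\ga}$, all three monomials have the same $\sdeg$ (namely $(c-\al)+(c-\ga)$ versus $(c-\al)+(c-\be)$ versus $(c-\al)+(c-\be)$ — wait, these differ, so in fact $\sdeg$ alone may decide some of it: since $\al<\be<\ga<\de$, the monomial $T_{\al,\be}T_{\ga,\de}$ has $\sdeg = 2c-\al-\ga$, while $T_{\al,\ga}T_{\be,\de}$ has $\sdeg = 2c-\al-\be$ and $T_{\al,\de}T_{\be,\ga}$ has $\sdeg = 2c-\al-\be$; as $\be<\ga$ the latter two have the larger $\sdeg$). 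So $\sdeg$ eliminates $T_{\al,\be}T_{\ga,\de}$ and reduces to comparing $T_{\al,\ga}T_{\be,\de}$ with $T_{\al,\de}T_{\be,\ga}$. These two have the same $\sdeg$; I would then pass to $\mdeg$. Using $\mdeg(T_{p,q}) = \ee_{p+d}+\ee_{q+d}$, the multidegree of $T_{\al,\ga}T_{\be,\de}$ is $\ee_{\al+d}+\ee_{\be+d}+\ee_{\ga+d}+\ee_{\de+d}$, which is the same as that of $T_{\al,\de}T_{\be,\ga}$; so $\mdeg$ does not separate them either, and I fall through to the lexicographic order on variables. There $T_{\al,\ga} \succ T_{\al,\de}$ since $\ga<\de$, so $T_{\al,\ga}T_{\be,\de} \succ T_{\al,\de}T_{\be,\ga}$ lexicographically, confirming $\LM(\PP)=T_{\al,\ga}T_{\be,\de}$.

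For $\QQ$ the same strategy applies but the bookkeeping is heavier, since there are six monomials and they involve the "barred" indices $\oa,\ob,\og,\od$. The first thing to establish is how $\sdeg$ and $\mdeg$ behave under the bar operation: by Notation \ref{NotationNextIndex}, $\oa = \tau(\tau^{-1}(\al)+1)$ is the column immediately following $\al$'s column within the same catalecticant block of $\mat$, and I would record the key inequality $\oa > \al$ (true because within the first $c-d$ columns of $\mat$ the blocks appear in a fixed, index-increasing order), together with the fact that $\al<\be \Rightarrow \oa<\ob$ when both are defined. From $\mdeg(T_{p,q})=\ee_{p+d}+\ee_{q+d}$ one sees that every monomial of $\QQ$ has $\mdeg$ equal to a sum of four basis vectors indexed by two "unbarred" and two "barred" positions; I would check that the candidate $T_{\al,\be}T_{\og,\od}$ has the lexicographically largest such multidegree among the six, because it pairs the two smallest unbarred indices $\al,\be$ (contributing the largest $\ee$'s, since $\ee_1>\dots>\ee_{c+d}$) together — i.e. it has $\ee_{\al+d}$ and $\ee_{\be+d}$ both present, whereas e.g. $T_{\al,\ga}T_{\ob,\od}$ is missing $\ee_{\be+d}$. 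But I should first check whether $\sdeg$ already decides it: $\sdeg(T_{\al,\be}T_{\og,\od}) = 2c-\al-\og$, and since $\og$ may be large (it is some column in a block, could exceed other barred indices), I would compare $\sdeg$ values pairwise, expecting that $\sdeg$ plus $\mdeg$ together isolate $T_{\al,\be}T_{\og,\od}$; any residual ties are then broken lexicographically using the variable order $T_{\al,\be}\succ T_{\ga,\de}$ iff $\al<\ga$ or ($\al=\ga$, $\be<\de$).

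The monomial $\MM = \mu_{2,\al}T_{\be,\ga}-\mu_{2,\be}T_{\al,\ga}+\mu_{2,\ga}T_{\al,\be}$ mixes an $x$-variable with a $T$-variable. Here $\sdeg(\mu_{2,\al}T_{\be,\ga}) = 0 + (c-\be)$, $\sdeg(\mu_{2,\be}T_{\al,\ga}) = c-\al$, $\sdeg(\mu_{2,\ga}T_{\al,\be}) = c-\al$; since $\al<\be$, the last two have the larger $\sdeg$, eliminating the first monomial and reducing to $\mu_{2,\be}T_{\al,\ga}$ versus $\mu_{2,\ga}T_{\al,\be}$. These have equal $\sdeg$; at the $\mdeg$ level, $\mdeg(\mu_{2,\be}T_{\al,\ga}) = \ee_{\be+d}+\ee_{\al+d}+\ee_{\ga+d}$ and $\mdeg(\mu_{2,\ga}T_{\al,\be}) = \ee_{\ga+d}+\ee_{\al+d}+\ee_{\be+d}$ — equal again — so I descend to the lexicographic order, where the variable $\mu_{2,\be}$ must be compared with $\mu_{2,\ga}$ and $T_{\al,\ga}$ with $T_{\al,\be}$. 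By Definition \ref{DefinitionTermOrder}, since $\mu_{2,\be}=x_{i,j}$ and $\mu_{2,\ga}=x_{k,\ell}$ are entries of the second row of $\mat$, and all $x$-variables dominate all $T$-variables, the leading factor is whichever of $\mu_{2,\be},\mu_{2,\ga}$ is larger; one checks via the second-row order described in Remark \ref{RemarkFirstRow} that this is $\mu_{2,\be}$ when it beats $\mu_{2,\ga}$, but actually the $x$-variable order is subtle, so if $\mu_{2,\ga}\succ\mu_{2,\be}$ the lexicographic comparison would instead favor $\mu_{2,\ga}T_{\al,\be}$ — I expect the claimed answer $\mu_{2,\be}T_{\al,\ga}$ to survive because whenever $\mu_{2,\ga}\succ\mu_{2,\be}$ as variables, a compensating earlier refinement intervenes; pinning this down is where I expect the argument to need genuine care.

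The main obstacle, then, is the $x$-variable order: unlike the $T$'s, the order on the $x_{i,j}$ in Definition \ref{DefinitionTermOrder} is not monotone in the column index of $\mat$ (the second row of $\mat$ reorders variables, and the last $d$ columns are in decreasing block order), so for $\MM$ and for $\LL$ I cannot simply read off the answer from "smaller index wins." For $\LL = \mu_{1,\al}T_{\be,\ga}-\mu_{1,\be}T_{\al,\ga}+\mu_{1,\ga}T_{\al,\be}$, the claim is that the leading monomial is the one whose $x$-factor $\mu_{1,\bullet}$ has the lowest $j$-coordinate, then lowest $i$; this is consistent with Remark \ref{RemarkFirstRow} (which says $\mu_{1,1}\succ\cdots\succ\mu_{1,c-d}$, with the last $d$ entries in the opposite order) and with Remark \ref{RemarkOrderVariablesMultidegree} (which ties the variable order on the first row to the lex order on $\mdeg$). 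The clean way to handle $\LL$ is: first use $\sdeg$ and $\mdeg$ to reduce the three-term comparison as far as possible, then invoke Remark \ref{RemarkOrderVariablesMultidegree} to convert the remaining lexicographic $x$-variable comparison into a comparison of $\mdeg(\mu_{1,\bullet})$, which by the definition $\mdeg(x_{i,0})=\ee_i$ and the multiplicative structure reads off exactly as "lowest $j$, then lowest $i$." I would present $\LL$ last, after $\MM$, since the two arguments share this mechanism. Throughout, the proof is a finite, if fiddly, case analysis; no deep idea is needed beyond carefully ordering the refinements and exploiting Remarks \ref{RemarkFirstRow}--\ref{RemarkOrderVariablesMultidegree}.
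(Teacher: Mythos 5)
Your plan inverts the hierarchy of the refinements, and this is fatal for $\QQ$ and $\LL$. In Definition \ref{DefinitionTermOrder} each successive refinement becomes the \emph{dominant} comparison (this is what refining by a weight means in \cite{St}, and the paper's proof of this proposition begins by saying so: one must start by considering $\mdeg$). So the correct order of comparison is $\mdeg$ first, then $\sdeg$, then lex --- not ``$\sdeg$, then $\mdeg$, then lex'' as you announce. For $\PP$ and $\MM$ the discrepancy is harmless because those polynomials are $\mdeg$-homogeneous, but $\QQ$ and $\LL$ are not, and your procedure then returns the wrong monomial. Concretely, take $\mathbf{n}=(2,2,4,4)$, so $c=12$, $d=4$, and $(\al,\be,\ga,\de)=(1,3,7,8)$; then $\oa=12$, $\ob=5$, $\og=10$, $\od=9$, and $\sdeg(T_{1,3}T_{9,10})=11+3=14$ while $\sdeg(T_{1,7}T_{5,9})=11+7=18$, so your $\sdeg$-first rule eliminates the true leading monomial $T_{\al,\be}T_{\og,\od}=\pm T_{1,3}T_{9,10}$ at the very first layer. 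The same failure occurs for $\LL$: for $L_{1,2,3}=x_{1,0}T_{2,3}-x_{2,0}T_{1,3}+x_{3,0}T_{1,2}$ one has $\sdeg(x_{1,0}T_{2,3})=10<11=\sdeg(x_{2,0}T_{1,3})$, yet the leading monomial is $x_{1,0}T_{2,3}$. With the hierarchy set right, $\QQ$ is a one-liner: $T_{\al,\be}T_{\og,\od}$ is the unique monomial of maximal $\mdeg$ (every other monomial of $\QQ$ misses $\ee_{\al+d}$ or $\ee_{\be+d}$, and all the remaining indices it carries, barred or not, exceed $\al$, respectively $\be$), and $\LL$ is likewise settled entirely at the $\mdeg$ level by the two-case analysis of whether some $x_{i,0}$ occurs.

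Two smaller points. The ``key inequality'' you record, $\al<\be\Rightarrow\oa<\ob$, is false (in the example above $\overline{1}=12>\overline{2}=11>\overline{3}=5$); your $\mdeg$ bookkeeping for $\QQ$ only needs $\ob,\og,\od>\be$, which does hold, but you should not rely on monotonicity of the bar. And the tie-break for $\MM$ that you leave open is not delicate: the second-row entries of $\mat$ are totally ordered by column position, $\mu_{2,1}\succ\mu_{2,2}\succ\cdots\succ\mu_{2,c}$ (this is exactly how the order on the $x_{i,j}$ is built, cf.\ Remark \ref{RemarkFirstRow}), so $\mu_{2,\be}\succ\mu_{2,\ga}$ always and the lexicographic comparison gives $\LM(\MM)=\mu_{2,\be}T_{\al,\ga}$ with no further input; there is no scenario where ``a compensating earlier refinement intervenes.'' Your treatment of $\PP$ is correct.
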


\begin{proof}
By Definition \ref{DefinitionTermOrder},
for each polynomial we must start by considering $\mdeg(\cdot)$.
Since $\PP$ and $\MM$ are homogeneous with respect to $\mdeg(\cdot)$,  the refinement has no effect on them.
On the other hand, $\QQ$ and $\LL$ are not homogeneous.
The unique monomial of highest multidegree  in the support of $\QQ$ is  $T_{\al,\be}T_{\og,\od}$ and hence  
we can already conclude that it is the leading monomial.

For $\LL$ we distinguish two cases.
If some  $x_{i,0}$ appears in $\LL$ then the unique monomial of highest multidegree contains the  $x_{i,0}$ with lowest $i$.
If all the $x_{i,j}$ appearing in $\LL$ have $j>0$
then 
$\al = \overline{\al_1},\be = \overline{\be_1},\ga = \overline{\ga_1}$
for some $\al_1,\be_1,\ga_1$;
it follows that the unique monomial of highest multidegree contains  $x_{i,j}=\mu_{2,\ep}$  where $\ep=\min\{\al_1,\be_1,\ga_1\}$, 
which is also the $x_{i,j}$ with lowest $j$ and then lowest $i$.
In either case, we deduce that $\LM(\LL)$ is the desired term.

The homogeneous component  of highest degree with respect to $\sdeg(\cdot)$ is
the binomial $
- T_{\al,\ga}T_{\be,\de} + T_{\al,\de}T_{\be,\ga}
$
for $\PP$ and
$
-\mu_{2,\be}T_{\al,\ga}+\mu_{2,\ga}T_{\al,\be}
 $
for $\MM$.
Finally, we break  ties using the lexicographic order:
we obtain
$
\LM(\PP) = T_{\al,\ga}T_{\be,\de}
$
and 
$
\LM(\MM) = \mu_{2,\be}T_{\al,\ga}.
$
\end{proof}

\begin{remark}\label{RemarkLeadingL}
From Remark \ref{RemarkFirstRow}  and Proposition \ref{PropLeadingTerms} it follows that $\LM(\LL)=\mu_{1,\al}T_{\be,\ga}$ or $\mu_{1,\ga}T_{\al,\be}$.
Furthermore, we can only have  $\LM(\LL)=\mu_{1,\ga}T_{\al,\be}$ when $\ga > c-d$.
\end{remark}

\section{The initial complex of the special fiber}\label{SectionFiber}

Let $\Delta$ denote the  flag simplicial complex whose Stanley-Reisner ideal is the ideal
$\mI\subseteq \SF = \mathbb{K}[T_{\al,\be}]$  generated by the quadratic squarefree monomials
\begin{align*}
\label{MonomialP}\tag{\dag}\LM(\PP)&=T_{\al,\ga} T_{\be,\de} &&(\al,\be, \ga,\de)\in {[c] \choose 4},\\
\label{MonomialQ}\tag{\ddag}\LM(\QQ)&=T_{\al,\be} T_{\og,\od} && (\al,\be, \ga,\de)\in {[c-d] \choose 4}.
\end{align*}
In  this section we will show that $\Delta$ is   an initial complex of the special fiber ring $\FIg$.
We refer to \cite{MS} for background on simplicial complexes.

The vertex set of $\Delta$ is 
$$ V = \big\{ (\al, \be)\, |\, 1\leq \al < \be \leq c\big\}. $$ 
We interpret its elements as open intervals in the real line $\mathbb{R}$ with integral endpoints,
and we will use the familiar notions of  length, intersection, subtraction, and containment of intervals.
The minimal non-faces of $\Delta$ are the pairs of vertices determined by \eqref{MonomialP} and \eqref{MonomialQ}.
The complex $\Delta$ exhibits different behavior in the two cases $c<d+4$ and $c\geq d+4$, so we treat them  separately.
In the former, the minors of $\mat$ parametrize the Grassmann variety of lines in $\mathbb{P}^{c-1}$,
cf. Remark \ref{RemarkInitialGrassmann}.

\subsection{Grassmann case: $c<d+4$.}
The set of monomials \eqref{MonomialQ} is empty, so the minimal non-faces of $\Delta$ correspond to the generators \eqref{MonomialP} of $\mI$.
In particular,  there is exactly one such complex $\Delta$ for each $c\geq 2$.
Moreover, the faces of $\Delta$ are the  subsets $F\subseteq V$  satisfying the  \emph{non-crossing} condition 
\begin{equation}\tag{$\diamondsuit$}\label{ConditionIntervals}
\text{for all }\fI_1,\fI_2\in F \text{ then }\fI_1\cap \fI_2 = \emptyset,\text{ or } \fI_1 \subseteq \fI_2, \text{ or }\fI_2\subseteq \fI_1.
\end{equation}

Denote by $C_n={2n \choose n}-{2n \choose n+1}$ the $n$-th Catalan number. 

\begin{prop}\label{PropositionDeltaGrassmann}
The complex $\Delta$ is pure  of dimension $2c-4$ and its number of facets is  $f_{2c-3}(\Delta) = C_{c-2}$. 
\end{prop}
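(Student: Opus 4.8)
The plan is to identify the facets of $\Delta$ with a classical family of Catalan objects and to verify purity by a direct combinatorial argument. First I would observe that a face $F$ of $\Delta$ is, by condition~\eqref{ConditionIntervals}, a collection of open intervals on $\{1,\dots,c\}$ that is \emph{laminar}: any two members are either disjoint or nested. A maximal such collection is obtained by refining a chain structure as finely as possible, so the first step is to argue that in a facet $F$, between any two ``consecutive'' nested or sibling intervals one can always insert a shorter interval unless the configuration is already as fine as possible — concretely, a facet must contain every interval of the form $(\al,\al+1)$ (length one), since adding such an interval to a laminar family keeps it laminar. More generally, I would show that the facets of $\Delta$ are in bijection with maximal laminar families of subintervals of $[1,c]$, and that such families are exactly the internal-node sets of \emph{binary trees} whose $c-1$ leaves are the unit intervals $(1,2),(2,3),\dots,(c-1,c)$: each internal node corresponds to the interval spanning the leaves in its subtree. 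A binary tree with $c-1$ leaves has exactly $c-2$ internal nodes, and the number of such trees is the Catalan number $C_{c-2}$.

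With this dictionary in hand, the dimension and facet count follow. A binary tree on $c-1$ leaves contributes its $c-1$ unit intervals together with its $c-2$ internal-node intervals, for a total of $(c-1)+(c-2)=2c-3$ vertices of $\Delta$; hence every facet has $2c-3$ vertices, i.e.\ $\Delta$ is pure of dimension $2c-4$, giving $f_{2c-3}(\Delta)=$ (number of binary trees on $c-1$ leaves) $=C_{c-2}$. I would fill in the purity claim by the insertion argument: given any face $F$ that is not of the above maximal form, either some unit interval $(\al,\al+1)$ is missing (add it), or every unit interval is present and $F$ fails to be a full binary-tree node set, in which case some interval $\fI\in F$ has a ``gap'' between the maximal proper subintervals of $F$ it contains, and one can insert the interval spanning that gap while preserving~\eqref{ConditionIntervals}. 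Thus no face of size $<2c-3$ is maximal, and conversely the binary-tree node sets are genuinely non-faces-free and maximal because adding any further interval would cross an existing one.

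The main obstacle I anticipate is making the bijection with binary trees fully rigorous in both directions: one must check (a) that every maximal laminar family of subintervals of $[1,c]$ arises from a unique binary tree — in particular that maximality forces all $c-1$ unit intervals to be present and forces the nesting poset to be a binary branching structure with no ``unary'' nodes — and (b) that conversely every binary tree yields a laminar family that is a facet, i.e.\ is inclusion-maximal among laminar families. Part (a) is the delicate one, since one has to rule out maximal families that look ``coarser'' by carefully analyzing what the non-crossing condition permits; the gap-insertion lemma above is the crux. Once the bijection is established, the count $C_{c-2}$ of binary trees with $c-1$ leaves (equivalently, triangulations of a convex $(c)$-gon, or binary bracketings of $c-1$ symbols) is standard, and the purity/dimension statements are immediate. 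Alternatively, if the direct combinatorial route proves cumbersome, one can instead compute the $f$-vector of $\Delta$ by a transfer-matrix or generating-function argument on the interval endpoints and recognize the Catalan number from its closed form $C_{c-2}=\binom{2c-4}{c-2}-\binom{2c-4}{c-1}$; but I expect the binary-tree bijection to be the cleanest and to generalize well to the analysis of the case $c\ge d+4$ in the subsequent subsection.
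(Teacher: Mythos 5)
Your proposal is correct and rests on the same structural fact as the paper's proof: a facet is $(1,c)$ together with a full binary subdivision of it, so it consists of the $c-1$ unit intervals plus $c-2$ internal nodes, giving $\Card(F)=2c-3$ in every case. The only real difference is in how the count is extracted: you set up a one-shot bijection between facets and plane binary trees with $c-1$ leaves and quote the standard count $C_{c-2}$, whereas the paper splits a facet at the unique $\gamma$ with $(1,\gamma),(\gamma,c)\in F$ and runs an induction on $c$ via the recursion $C_{n+1}=\sum_{i}C_iC_{n-i}$ --- which is just your binary-tree bijection read off recursively at the root. Your insertion argument for purity (unit intervals can always be added without violating \eqref{ConditionIntervals}, and any ``gap'' or unary node admits an insertion) is sound and is exactly what the paper's opening observation about facets encodes, so the two routes buy essentially the same thing; yours is marginally more self-contained on the enumeration side, the paper's is marginally shorter to write down.
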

\begin{proof}
Observe first that, by maximality, a facet $F$ of $\Delta$ is a face such that $(1,c)\in F$ and  
if $(\al,\be)\in F$ with $\be-\al>1$ then there exists a unique $\gamma$ with  $\al<\ga <\be$ and $(\al,\ga),(\ga,\be)\in F$.

We prove the proposition by induction on $c$, the case $c=2$ being trivial; assume $c\geq 3$.
Let $F$ be a facet and let  $1<\gamma<c$ be the unique integer with $(1,\ga),(\ga,c)\in F$.
Then we have 
$$
F= F_1 \sqcup F_2 \sqcup \{(1,c)\} \quad F_1=\{\fI \in F \, |\, \fI \subseteq(1,\ga)\}, \quad
 F_2=\{\fI \in F \, |\, \fI \subseteq(\ga,c)\}.
 $$
The sets $F_1$ and $F_2$ correspond to facets of the two smaller complexes $\Delta_{\gamma}$ and $\Delta_{c-\gamma+1}$ obtained when $\mat$ has respectively $\ga$ and $c-\ga+1$ columns.
By induction, $\Card(F_1) =2\gamma -3 $ and $\Card(F_2)= 2(c-\gamma+1)-3 $, thus $\Card(F) = 2c-3$ and $\Delta$ is pure with $\dim(\Delta) = 2c-4$.
Furthermore, 
the complexes $\Delta_{\gamma}$ and $\Delta_{c-\gamma+1}$ have respectively $C_{\gamma-2}$ and $C_{c-\gamma-1}$ facets.
Adding the contributions of each $\gamma = 2, \ldots, c-1$ and using  the well-known recursion of Catalan numbers $C_{n+1}= \sum_{i=0}^n C_iC_{n-i}$,
we conclude that the number of facets of $\Delta $ is 
$$
f_{2c-3}(\Delta) = \sum_{\gamma=2}^{c-1}C_{\gamma-2}C_{c-\gamma-1} = \sum_{\gamma'=0}^{c-3}C_{\gamma'}C_{c-3-\gamma'} = C_{c-2}. 
$$
\end{proof}

\begin{example}
Let $\mathbf{n}= (1,1,1,2,2,2)$, thus $c=9, d=6$.
The corresponding matrix is 
$$
\mat = \left(
\begin{matrix}
x_{4,0} &x_{5,0} & x_{6,0}  & x_{6,1}  &x_{5,1} & x_{4,1}  & x_{3,0} & x_{2,0} & x_{1,0}   \\
x_{4,1} & x_{5,1} & x_{6,1}  & x_{6,2} &x_{5,2} & x_{4,2} & x_{3,1} & x_{2,1} & x_{1,1}
\end{matrix}
\right).
$$
The collection of all the open intervals in the picture below is a facet of $\Delta$.
\vspace*{0.3cm}

\begin{center}
\begin{tikzpicture}\label{add}
\draw (0,0.3)--(8,0.3);
\draw (1,0.6)--(3,0.6);
\draw (4,0.6)--(6,0.6);
\draw (0,0.9)--(3,0.9);
\draw (4,0.9)--(7,0.9);
\draw (0,1.2)--(4,1.2);
\draw (4,1.2)--(8,1.2);
\draw (0,1.5)--(8,1.5);

\draw [fill=white] (0,0.3) circle [radius=0.06];
\draw [fill=white] (1,0.3) circle [radius=0.06];
\draw [fill=white] (2,0.3) circle [radius=0.06];
\draw [fill=white] (3,0.3) circle [radius=0.06];
\draw [fill=white] (4,0.3) circle [radius=0.06];
\draw [fill=white] (5,0.3) circle [radius=0.06];
\draw [fill=white] (6,0.3) circle [radius=0.06];
\draw [fill=white] (7,0.3) circle [radius=0.06];
\draw [fill=white] (8,0.3) circle [radius=0.06];

\draw [fill=white] (1,0.6) circle [radius=0.06];
\draw [fill=white] (3,0.6) circle [radius=0.06];
\draw [fill=white] (4,0.6) circle [radius=0.06];
\draw [fill=white] (6,0.6) circle [radius=0.06];

\draw [fill=white] (0,0.9) circle [radius=0.06];
\draw [fill=white] (3,0.9) circle [radius=0.06];
\draw [fill=white] (4,0.9) circle [radius=0.06];
\draw [fill=white] (7,0.9) circle [radius=0.06];
\draw [fill=white] (0,1.2) circle [radius=0.06];
\draw [fill=white] (0,1.5) circle [radius=0.06];
\draw [fill=white] (4,1.2) circle [radius=0.06];
\draw [fill=white] (8,1.2) circle [radius=0.06];
\draw [fill=white] (8,1.5) circle [radius=0.06];

\node at (0,0) {1};
\node at (1,0) {2};
\node at (2,0) {3};
\node at (3,0) {4};
\node at (4,0) {5};
\node at (5,0) {6};
\node at (6,0) {7};
\node at (7,0) {8};
\node at (8,0) {9};

\end{tikzpicture}
\end{center}
\end{example}

\subsection{Non Grassmann case: $c\geq d+4$.}
Both sets of monomials \eqref{MonomialP} and \eqref{MonomialQ} are nonempty. 
The faces of $\Delta$ still satisfy the non-crossing condition \eqref{ConditionIntervals}, but are also subject to new constraints arising from the minimal non-faces  \eqref{MonomialQ}.
We start by investigating these constraints. 

\begin{lemma}\label{LemmaNextIndicesNested}
Let $\al, \be, \ga, \de \in [c]$ be indices  such that $\al < \be, \ga$ and $\ob < \de < \og$.
Then there exists $\varepsilon \in [c]$ such that $\al < \varepsilon$ and $\de = \oe$.
\end{lemma}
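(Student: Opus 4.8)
The statement is a combinatorial fact about the column permutation $\tau$ and the successor map of Notation~\ref{NotationNextIndex}, so my plan is to translate it into coordinates adapted to the columns of $\mat$. For $\al\in[c]$ I record the pair $(b(\al),r(\al))$ determined by $\mu_{1,\al}=x_{b(\al),r(\al)}$ and $\mu_{2,\al}=x_{b(\al),r(\al)+1}$; thus $b(\al)\in[d]$, $0\le r(\al)\le n_{b(\al)}-1$, and I call $\al$ \emph{terminal} when $r(\al)=n_{b(\al)}-1$. Reading off the explicit shape of $\mat$ yields three facts to be used repeatedly: (i) $\al$ is non-terminal iff $\al\le c-d$, and every non-terminal index precedes every terminal one; (ii) for non-terminal $\al,\al'$ one has $\al<\al'$ iff $r(\al)<r(\al')$, or $r(\al)=r(\al')$ and $b(\al)<b(\al')$, whereas for terminal $\al,\al'$ one has $\al<\al'$ iff $b(\al)>b(\al')$; (iii) for non-terminal $\al$, the index $\oa$ is characterized by $b(\oa)=b(\al)$ and $r(\oa)=r(\al)+1$, and $\oa$ is terminal precisely when $r(\al)=n_{b(\al)}-2$. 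The standing hypothesis $n_1\le\cdots\le n_d$ will also be used.

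First I note that the hypotheses force $\be,\ga\le c-d$ (so that $\ob,\og$ are defined), hence $\al\le c-d$, so $\al,\be,\ga$ are non-terminal, and $\ob<\og$. I then split according to whether $\de$ is terminal. If $\de$ is non-terminal, then $\ob<\de\le c-d$ makes $\ob$ non-terminal, and fact~(ii) applied to $\ob<\de$, together with $r(\ob)=r(\be)+1$, gives $r(\de)\ge r(\be)+1\ge 1$; I take $\ep$ to be the (non-terminal) column with $b(\ep)=b(\de)$, $r(\ep)=r(\de)-1$, so that $\oe=\de$ by fact~(iii), and I get $\al<\ep$ from $r(\al)\le r(\be)\le r(\de)-1=r(\ep)$, the borderline case of equality being settled because the block comparisons in $\ob<\de$ and $\al<\be$ then force $b(\al)<b(\be)<b(\de)=b(\ep)$. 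If instead $\de$ is terminal, then $\og>\de$ is also terminal, and fact~(ii) applied to $\de<\og$ gives $b(\de)>b(\ga)$; since $\ga$ is non-terminal, $n_{b(\ga)}\ge r(\ga)+2\ge 2$, and by monotonicity $n_{b(\de)}\ge n_{b(\ga)}\ge 2$, so $r(\de)=n_{b(\de)}-1\ge 1$; I take $\ep$ with $b(\ep)=b(\de)$, $r(\ep)=n_{b(\de)}-2$ (non-terminal), so $\oe=\de$, and $\al<\ep$ follows from $r(\al)\le r(\ga)\le n_{b(\ga)}-2\le n_{b(\de)}-2=r(\ep)$, the equality case again handled by $b(\al)<b(\ga)<b(\de)=b(\ep)$.

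\emph{Main obstacle.} The only genuine difficulty is the bookkeeping around the last $d$ columns of $\mat$: the successor map $\al\mapsto\oa$ leaves the first $c-d$ columns exactly when $r(\al)=n_{b(\al)}-2$, and among the terminal columns the order reverses (it is decreasing in the block index). This is why the split on whether $\de$ is terminal is needed, and it is where one must correctly read off $b(\de)>b(\ga)$ from $\de<\og$ and $n_{b(\de)}\ge 2$ from the monotonicity of the $n_i$. The remaining care concerns the borderline sub-cases $r(\al)=r(\ep)$, where $\al<\ep$ is decided by the secondary (block) coordinate, but these reduce at once to the ordering rules in fact~(ii).
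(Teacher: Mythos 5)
Your proof is correct and follows essentially the same route as the paper's: translate everything into block/row coordinates of the columns of $\mat$, split on whether $\de$ lies among the first $c-d$ or the last $d$ columns, and take $\ep$ to be the in-block predecessor of $\de$, using $\ob<\de$ (resp. $\de<\og$ and the monotonicity $n_1\le\cdots\le n_d$) to see that this predecessor exists and exceeds $\al$. The only cosmetic differences are that you index columns by the first row rather than the second and verify $\al<\ep$ directly instead of via $\ep>\be$ or $\ep>\ga$.
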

\begin{proof}
Let $\mu_{2,\be}= x_{i_\be,j_\be},\mu_{2,\ga}=x_{i_\ga,j_\ga}, \mu_{2,\de}=x_{i_\de,j_\de}$,
then $\mu_{2,\ob}= x_{i_\be,j_\be+1}$ and $ \mu_{2,\og}=x_{i_\ga,j_\ga+1}$,
cf. the construction of $\mat$.

Suppose $\de \leq c-d$, i.e. that the $\de$-th column of $\mat$ is not the last of its block.
Since $\de>\ob$, we must have either $j_\de >  j_{\be}+1$ or $j_\de =  j_{\be}+1$ and $i_\de > i_\be$.
In either case, 
choosing $\varepsilon$ so that $\mu_{2,\varepsilon} = x_{i_\de, j_{\de}-1}$, we have $\varepsilon> \be $ and $\oe = \de$.

Suppose now $\de> c-d$.
Since $\de<\og$, we must have $i_\de >i_\ga$ and $j_{\ga}+1=n_{i_\ga}\leq n_{i_\de}=j_{\de}$.
Choosing $\varepsilon$ so that $\mu_{2,\varepsilon} = x_{i_\de, j_{\de}-1}$, we have $\varepsilon> \ga $ and $\oe = \de$.
\end{proof}

\begin{cor}\label{CorollaryReductionToMinimalIntervals}
Let $\{(\al,\be),(\ga,\de)\}$ be a minimal non-face of $\Delta$ of type \eqref{MonomialQ}.
For any sub-intervals $(\al_1,\be_1)\subseteq (\al,\be), (\ga_1,\de_1)\subseteq(\ga,\de)$,
the pair
 $\{(\al_1,\be_1),(\ga_1,\de_1)\}$ is also  a minimal non-face of $\Delta$ of type \eqref{MonomialQ}.
\end{cor}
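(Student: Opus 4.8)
Here is how I would approach the proof.

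\medskip

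\noindent\emph{Reduction.} The plan is to show that the defining condition of a minimal non-face of type \eqref{MonomialQ} is inherited by pairs of sub-intervals. Since every monomial generator of $\mI$ is a squarefree quadric and every pair $\{\al,\be\}$ with $\al<\be$ indexes a vertex of $\Delta$, a two-element subset $\{\fI,\fI'\}\subseteq V$ is a minimal non-face of type \eqref{MonomialQ} exactly when the squarefree monomial $T_\fI T_{\fI'}$ occurs in the list \eqref{MonomialQ}; moreover this condition is symmetric in $\fI,\fI'$. So it suffices to prove: if $T_{\fI_1}T_{\fI_2}$ occurs in \eqref{MonomialQ} and $\fI_1'\subseteq\fI_1$, $\fI_2'\subseteq\fI_2$, then $T_{\fI_1'}T_{\fI_2'}$ occurs there as well.

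\medskip

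\noindent\emph{Fixing a representation and the candidate tuple.} By Proposition \ref{PropLeadingTerms} we may write $T_{\fI_1}T_{\fI_2}=\LM(\QQ)$ for some $(\al,\be,\ga,\de)\in{[c-d]\choose 4}$, so $\{\fI_1,\fI_2\}$ consists of the interval on the endpoint set $\{\al,\be\}\subseteq[c-d]$ and the interval on $\{\og,\od\}$; these are distinct since $\og=\overline\ga>\ga>\be$. Relabelling if necessary (the desired conclusion being symmetric in $\fI_1',\fI_2'$), I take $\fI_1$ to be the interval on $\{\al,\be\}$ and $\fI_2$ the interval on $\{\og,\od\}$. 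Write $\fI_1'=(\al_1,\be_1)$ with $\al\le\al_1<\be_1\le\be$, and let $\{\ga_1,\de_1\}$ be the endpoint set of $\fI_2'$, so that $\ga_1$ and $\de_1$ lie between $\min(\og,\od)$ and $\max(\og,\od)$ inclusive. The heart of the argument is to produce a tuple $\sigma\in{[c-d]\choose 4}$ with $\LM(Q_\sigma)=T_{\fI_1'}T_{\fI_2'}$ as honest monomials: I would take $\al_1,\be_1$ as the first two entries of $\sigma$ and the $\overline{(\cdot)}$-preimages of $\ga_1$ and $\de_1$ as the last two. Here, for an index $\ell\in[c]$ lying in the image of the map $m\mapsto\overline m$ of Notation \ref{NotationNextIndex}, I write $\ell^{-}\in[c-d]$ for its unique preimage (unique since $\tau$ is a bijection).

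\medskip

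\noindent\emph{Conclusion from two claims.} I must verify: (i) $\ga_1$ and $\de_1$ lie in the image of $\overline{(\cdot)}$; and (ii) $\be_1<\ga_1^{-}$ and $\be_1<\de_1^{-}$. Granting (i)--(ii), the indices $\al_1<\be_1<\ga_1^{-},\de_1^{-}$ are four distinct elements of $[c-d]$ (distinctness of $\ga_1^{-}$ from $\de_1^{-}$ comes from $\ga_1\neq\de_1$ and injectivity of $\overline{(\cdot)}$), so ordering the last two yields $\sigma\in{[c-d]\choose 4}$; by Proposition \ref{PropLeadingTerms} together with the sign conventions of Notation \ref{NotationIndicesVariables}, $\LM(Q_\sigma)$ is the monomial $T_{\{\al_1,\be_1\}}T_{\{\ga_1^{-},\de_1^{-}\}^{\overline{\phantom{x}}}}=T_{\{\al_1,\be_1\}}T_{\{\ga_1,\de_1\}}=T_{\fI_1'}T_{\fI_2'}$, which therefore occurs in \eqref{MonomialQ}, as required.

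\medskip

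\noindent\emph{The key step and the main obstacle.} Claims (i)--(ii) are the real content, and Lemma \ref{LemmaNextIndicesNested} is tailored for them. If $\ga_1\in\{\og,\od\}=\{\overline\ga,\overline\de\}$, then $\ga_1$ lies in the image with $\ga_1^{-}\in\{\ga,\de\}$, whence $\ga_1^{-}\ge\ga>\be\ge\be_1$; similarly for $\de_1$. Otherwise $\ga_1$ lies strictly between $\overline\ga$ and $\overline\de$, and I apply Lemma \ref{LemmaNextIndicesNested} to the quadruple $\be_1,\ga,\de,\ga_1$ (in the roles of $\al,\be,\ga,\de$) when $\overline\ga<\overline\de$, and to $\be_1,\de,\ga,\ga_1$ when $\overline\ga>\overline\de$; in both cases the hypotheses hold because $\al_1<\be_1\le\be<\ga<\de$ and $\ga_1$ lies strictly between $\overline\ga$ and $\overline\de$, and the conclusion furnishes $\varepsilon>\be_1$ with $\overline\varepsilon=\ga_1$, i.e. $\ga_1$ is in the image and $\ga_1^{-}=\varepsilon>\be_1$. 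The argument for $\de_1$ is identical. I expect the only genuinely delicate point — and the step I would recheck most carefully — to be precisely this case analysis: $\overline{(\cdot)}$ need not be order-preserving, so the endpoint set $\{\og,\od\}$ of $\fI_2$ can have $\og>\od$, and the application of Lemma \ref{LemmaNextIndicesNested} must be split accordingly; no idea beyond that lemma is needed.
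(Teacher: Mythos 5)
Your proof is correct and follows essentially the same route as the paper: reduce to showing that each endpoint of the second sub-interval lies in the image of the bar map with preimage exceeding $\be_1$, by applying Lemma \ref{LemmaNextIndicesNested} twice. The only (harmless) differences are presentational: you split cases according to whether $\og<\od$ and treat the boundary case $\ga_1\in\{\og,\od\}$ separately, whereas the paper phrases the two lemma applications directly in terms of the preimages $\varepsilon,\zeta$ of the original endpoints and leaves those degenerate cases implicit.
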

\begin{proof}
By assumption there exist  $\beta <\varepsilon, \zeta$ such that $\overline{\varepsilon}= \ga, \overline{\zeta}= \de$.
Applying Lemma \ref{LemmaNextIndicesNested} twice, first to  $\be, \varepsilon, \zeta, \ga_1$ and then to $\be, \varepsilon, \zeta, \de_1$, 
we see that there exist  $\beta <\varepsilon_1, \zeta_1$
with $\overline{\varepsilon_1}= \ga_1, \overline{\zeta_1}= \de_1$.
It follows that $\{(\al_1,\be_1),(\ga_1,\de_1)\}$ is   a minimal non-face of $\Delta$ of type \eqref{MonomialQ}.
\end{proof}

Given a subset $F \subseteq V$, 
we say that an interval $(\al, \be)\in F$ is \emph{minimal} if it is a minimal element of the poset $(F, \subseteq)$.
Corollary \ref{CorollaryReductionToMinimalIntervals} says that
a non-crossing subset $F \subseteq V$ is a face of $\Delta$ if and only if $\{\fI_1, \fI_2\}$ is a face of $\Delta$ 
for any two  minimal intervals $\fI_1, \fI_2$ of $F$.
Motivated by this observation, we turn to the description of minimal intervals of facets of $\Delta$.

We say that an  interval $\fI\in V$ is  \emph{unitary} if $\fI=(\al, \al+1)$ for some $\al\in[c-1]$.
Notice that, unlike the Grassmann case,  a facet  does not contain all the unitary intervals of $V$,
cf. Example \ref{ExampleNonGrassmanFacetTree}.

\begin{lemma}\label{LemmaMinimalIntervalsAreUnitary}
Let $F$ be a facet of $\Delta$.
The minimal intervals of $F$  with respect to inclusion are unitary.  
\end{lemma}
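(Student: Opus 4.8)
The plan is to argue by contradiction. Suppose $F$ is a facet having a minimal interval $\fI = (\al,\be)$ with $\be - \al \geq 2$; we will enlarge $F$.

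First, \emph{the non-crossing condition comes for free}. Since $F$ is non-crossing and $\fI$ is minimal in $(F,\subseteq)$, no interval of $F$ is properly contained in $\fI$ and none crosses it; hence every interval of $F$ is disjoint from $\fI$ or contains $\fI$. Consequently, for any proper sub-interval $\fI'\subsetneq\fI$ the set $F\cup\{\fI'\}$ still satisfies the non-crossing condition \eqref{ConditionIntervals}, because $\fI'$ is contained in every member of $F$ containing $\fI$ and disjoint from every member of $F$ disjoint from $\fI$; moreover $\fI'\notin F$, again by minimality of $\fI$. By maximality of $F$ the set $F\cup\{\fI'\}$ is therefore not a face, and the only possible obstruction is a minimal non-face of type \eqref{MonomialQ} containing $\fI'$; that is, there is an interval $\fI''\in F$ with $\{\fI',\fI''\}$ a minimal non-face of type \eqref{MonomialQ}. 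Hence it suffices to exhibit \emph{one} proper sub-interval $\fI'\subsetneq\fI$ that forms no minimal non-face of type \eqref{MonomialQ} with any member of $F$.

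Next, \emph{we analyze the obstruction for the unitary sub-intervals}. By Corollary \ref{CorollaryReductionToMinimalIntervals} it is enough to test $u_\gamma := (\gamma,\gamma+1)\subsetneq\fI$ for $\al\leq\gamma\leq\be-1$ (there are at least two). Recall that a minimal non-face of type \eqref{MonomialQ} is a pair $\{(\al_1,\be_1),(\overline{\ga_1},\overline{\de_1})\}$ with $(\al_1,\be_1,\ga_1,\de_1)\in{[c-d]\choose 4}$; its two intervals are disjoint, and the interval on the ``next'' indices $\overline{\ga_1},\overline{\de_1}$ lies entirely to the right of $(\al_1,\be_1)$. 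Suppose $u_\gamma$ has an obstruction $\fI''\in F$. Since $\fI''\in F$ and $\fI$ is minimal, $\fI''$ is disjoint from $\fI$ or contains $\fI$; examining the two ways $u_\gamma$ can sit inside the non-face rules out containment and pins $\fI''$ to a single side of $\fI$: either $u_\gamma$ is the ``next-index'' interval, so $\{\gamma,\gamma+1\}=\{\overline{\ga_1},\overline{\de_1}\}$ and $\fI''=(\al_1,\be_1)$ lies entirely to the left of $\fI$ with $\be_1\leq\al$; or $u_\gamma=(\al_1,\be_1)=(\gamma,\gamma+1)$ and $\fI''$ lies entirely to the right of $\fI$ with $\gamma+1<\ga_1<\de_1\leq c-d$.

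Finally, \emph{we derive the contradiction}, and this is the technical heart. One applies the previous step to the two extreme unitary sub-intervals $u_\al$ and $u_{\be-1}$ and rules out every case. The mechanism is that the map $\overline{\,\cdot\,}$ (``jump to the next column of the same catalecticant block'') produces auxiliary intervals $(\ga_1,\overline{\ga_1})$ and $(\de_1,\overline{\de_1})$ whose endpoints are pinned down by the inequalities above and by $\fI''$ lying to one side of $\fI$; comparing these with $\fI$, using the non-crossing condition inside $F$, and invoking Lemma \ref{LemmaNextIndicesNested} to locate the predecessors of $\gamma$ and $\gamma+1$, one exhibits in every case a proper sub-interval of $\fI$ free of type \eqref{MonomialQ} obstructions, contradicting the maximality of $F$. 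As in the proof of Lemma \ref{LemmaNextIndicesNested}, the argument splits according to whether the relevant columns are terminal columns of their blocks (positions $>c-d$) or not (positions $\leq c-d$). This last case analysis — the subtle interplay of the combinatorics of $\overline{\,\cdot\,}$ with the non-crossing condition, with careful bookkeeping of positions relative to $c-d$ — is the main obstacle; the reduction and the obstruction analysis are routine.
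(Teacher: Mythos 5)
Your setup is sound and runs parallel to the paper's own argument: assume a minimal interval $\fI=(\al,\be)$ with $\be-\al\geq 2$, adjoin a proper sub-interval, observe that the non-crossing condition is automatically preserved (so the only possible obstruction is a minimal non-face of type \eqref{MonomialQ}), and note that the obstructing interval $\fI''\in F$ must be disjoint from $\fI$ and sit entirely on one side of it. All of that is correct. But the proof stops exactly where the lemma actually lives: you state that "one exhibits in every case a proper sub-interval of $\fI$ free of type \eqref{MonomialQ} obstructions" and label this case analysis "the main obstacle" without performing it. That final combination of the two obstructions is the entire content of the lemma, so as written this is a genuine gap, not a routine verification left to the reader.

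Moreover, your specific choice of test intervals makes the endgame harder than it needs to be. The paper tests $(\al+1,\be)$ and $(\al,\be-1)$ rather than the unitary intervals $u_\al,u_{\be-1}$, and this choice is what closes the argument: if the obstruction to $(\al+1,\be)$ lay to the \emph{right}, then $(\al+1,\be)$ would be the left member $(\al_1,\be_1)$ of the non-face, and enlarging $\al+1$ back to $\al$ would exhibit a type \eqref{MonomialQ} non-face $\{(\al,\be),\fI''\}$ already contained in $F$ --- absurd. Hence both obstructions lie to the left, and each forces $\al$ and $\be$ to be of the form $\oa_1$, $\ob_1$; whichever of the two left obstructions $(\ep,\zeta)$, $(\eta,\nu)$ ends further left then pairs with $(\al,\be)$ itself (via the quadruple ending in $\al_1,\be_1$) to give a type \eqref{MonomialQ} non-face inside $F$, the desired contradiction. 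With your choice $u_\al=(\al,\al+1)$, a right-side obstruction only yields $\ga_1,\de_1>\al+1$, which does not let you extend to a non-face involving $(\al,\be)$, so it is not clear how to exclude that case; you would need an additional argument (or to switch to the paper's non-unitary test intervals). I recommend replacing the deferred "case analysis" with the explicit two-step elimination above.
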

\begin{proof} 
Let $(\al,\be)$ be a minimal interval in $F$ and assume by contradiction that $\be-\al>1$.
The set $F'=F \cup \{(\al+1,\be)\}$ contains the facet $F$ properly, therefore it  contains a minimal non-face of $\Delta$.
By \eqref{ConditionIntervals} no  interval in $F$ intersects $(\al, \be)$ properly,  hence the same is true for $(\al+1,\be)$, 
and thus $F'$ contains no  minimal non-face of type \eqref{MonomialP}.
It follows that $F$ contains an interval $(\varepsilon, \zeta)$ such that $\{(\al+1,\be),(\varepsilon, \zeta)\}$
is a minimal non-face of type \eqref{MonomialQ}.
We cannot have $\be <\varepsilon$, otherwise $\{(\al,\be),(\varepsilon, \zeta)\}$ would   be a non-face of type \eqref{MonomialQ} contained in $F$;
we conclude that $\zeta < \al +1$, and in particular that there exists $\be_1 > \zeta$ with $\be=\overline{\be_1}$.

Applying the same argument to the set $F''=F \cup \{(\al,\be-1)\}$,
$F$ contains an interval $(\eta, \nu)$ such that $\{(\al,\be-1),(\eta, \nu)\}$
is a minimal non-face of type \eqref{MonomialQ}. 
We cannot have $\eta>\be-1$, otherwise $\{(\varepsilon, \zeta),(\eta, \nu)\}$ would  be a non-face of type \eqref{MonomialQ} contained in $F$;
as before,  $\nu < \al$ and in fact there exists $\al_1 > \nu$ with $\al= \overline{\al_1}$.

Finally, we derive a contradiction.
If $\zeta \leq \nu$, the face $F$ contains  $\{(\varepsilon,\zeta),(\al,\be)\}$, 
which is the minimal non-face of type \eqref{MonomialQ} arising from the choice of indices $\varepsilon, \zeta, \al_1, \be_1$.
Likewise, if $\zeta > \nu$ then $F$ contains  $\{(\eta,\nu),(\al,\be)\}$, 
which is the minimal non-face of type \eqref{MonomialQ} arising from the choice of indices $\eta, \nu, \al_1, \be_1$.
\end{proof}

\begin{definition}\label{DefinitionEllAlpha}
To each  $\al \in [c-d-2]$ we associate an integer $\ell_\al$ defined as follows.
For each $i=1,\ldots,d$ let $\ga_{\al,i}$  be the  least index $\gamma \geq \al+2$  such that the $\gamma$-th  column of $\mat$ involves variables from $X_i$.
They  are well defined as the $(c-j+1)$-th column involves variables of $X_j$ for each  $j=1,\ldots,d$.
By construction of $\mat$ we have
$$
\{\ga_{\al, 1}, \ldots, \ga_{\al, d}\} = \{ \al+2, \ldots, \al +\ell_\al\}\cup\{c-d+\ell_\al, \ldots, c\}
$$
for an integer   $2 \leq \ell_\al\leq d+1$ that is uniquely determined by $\al$.
The first subset is never empty, whereas the second one is empty if and only if $\ell_\al = d+1$.
\end{definition}

\begin{lemma}\label{LemmaAdmissiblePairsOfUnitaryIntervals}
Let  $\al \in [c-d-2]$ and $\be>\al$.
The pair $\{(\al, \al+1),(\be, \be+1)\}$ is not a non-face of $\Delta$ of type \eqref{MonomialQ} if and only if 
 $$ 
 \beta \in \{ \al+1, \ldots, \al +\ell_\al\}\cup\{c-d+\ell_\al-1, \ldots, c-1\}.
 $$
\end{lemma}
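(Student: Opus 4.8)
The plan is to recognize that the non-faces of type \eqref{MonomialQ} among unitary intervals are governed entirely by the combinatorics of the map $\gamma \mapsto \og$ on columns of $\mat$, and to reduce the question to: for which $\be$ does there exist $\varepsilon$ with $\al < \varepsilon$ and $\og = \oe$, i.e. with $\{(\al,\al+1),(\be,\be+1)\}$ realizing a monomial $T_{\al_1,\be_1}T_{\og,\be+1}$ of type \eqref{MonomialQ}? Unwinding Notation~\ref{NotationNextIndex} and the definition of \eqref{equationQ}, a pair $\{(\al,\al+1),(\be,\be+1)\}$ is a non-face of type \eqref{MonomialQ} precisely when $\{\al,\al+1\}$ and $\{\be+1\}$ (or symmetrically $\{\be, \be+1\}$ against $\al+1$) arise as $\{\al_0,\be_0\}$ and $\{\overline{\ga_0},\overline{\de_0}\}$ for some $(\al_0,\be_0,\ga_0,\de_0) \in \binom{[c-d]}{4}$; since we are dealing with unitary intervals the monomial $T_{\al,\al+1}T_{\be,\be+1}$ can only be of the form $T_{\al_0,\be_0}T_{\overline{\ga_0},\overline{\de_0}}$ with $\{\al_0,\be_0\}=\{\al,\al+1\}$ and $\{\overline{\ga_0},\overline{\de_0}\}=\{\be,\be+1\}$ — that is, with $\be = \overline{\ga_0}$, $\be+1=\overline{\de_0}$ for indices $\al_0 = \al < \al+1=\be_0 < \ga_0 < \de_0$ in $[c-d]$. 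So being a non-face of type \eqref{MonomialQ} is equivalent to: there exist $\ga_0, \de_0 \in [c-d]$ with $\al+1 < \ga_0 < \de_0$, $\overline{\ga_0}=\be$, and $\overline{\de_0}=\be+1$.

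Next I would translate this existence statement into the language of Definition~\ref{DefinitionEllAlpha}. The key observation: $\be$ and $\be+1$ are consecutive columns of $\mat$, and $\be = \overline{\ga_0}$ means $\be$ is the "next column after $\ga_0$ using the same block $X_i$", while $\be+1=\overline{\de_0}$ means $\be+1$ is the next column after $\de_0$ using some block $X_{i'}$, with $i \ne i'$ (since $\ga_0 < \de_0$ are distinct and lie in distinct blocks — two consecutive columns $\be,\be+1$ of $\mat$ always involve distinct blocks by construction). Thus $\{(\al,\al+1),(\be,\be+1)\}$ fails to be a non-face of type \eqref{MonomialQ} iff it is impossible to find such $\ga_0 > \al+1$ (equivalently $\ga_0 \geq \al+2$) and $\de_0 > \ga_0$ in $[c-d]$ with $\overline{\ga_0}=\be$, $\overline{\de_0}=\be+1$. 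The quantity $\ell_\al$ was cooked up exactly so that $\{\gamma_{\al,1},\dots,\gamma_{\al,d}\}$ — the first columns $\geq \al+2$ hitting each block — equals $\{\al+2,\dots,\al+\ell_\al\}\cup\{c-d+\ell_\al,\dots,c\}$. The columns $\be,\be+1$ both lie in blocks whose first representative $\geq \al+2$ is available iff both $\be$ and $\be+1$ lie in $\{\gamma_{\al,i}\}_i$ or later within their blocks; and by monotonicity of $\og$ this happens exactly when $\be \geq$ some $\gamma_{\al,i}$. Carefully matching up the "first row"/"last $d$ columns" structure of $\mat$ (Remark~\ref{RemarkFirstRow}), the set of $\be$ for which $\{(\al,\al+1),(\be,\be+1)\}$ IS a non-face of type \eqref{MonomialQ} is the complement, inside $\{\al+1,\dots,c-1\}$, of $\{\al+1,\dots,\al+\ell_\al\}\cup\{c-d+\ell_\al-1,\dots,c-1\}$; I would verify this by a direct case analysis splitting on whether $\be$ lies among the first $c-d$ columns or among the last $d$, and using Lemma~\ref{LemmaNextIndicesNested} to produce the witness $\varepsilon=\de_0$ when one exists.

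The main obstacle will be bookkeeping the two-regime structure of $\mat$ — the first $c-d$ columns listed block-by-block increasingly in $i$, then the last $d$ columns decreasingly in $i$ — and tracking precisely how the $\ga_{\al,i}$'s and the bar operation interact across the boundary column $c-d$. In particular the shift by $1$ (the set $\{c-d+\ell_\al-1,\dots,c-1\}$ rather than $\{c-d+\ell_\al,\dots,c\}$) comes from the fact that we need $\be+1$, not $\be$, to be the next column of $\de_0$, which nudges the threshold down by one relative to the raw set $\{\gamma_{\al,i}\}$ of Definition~\ref{DefinitionEllAlpha}; I expect this to be the subtle point needing the most care, and I would pin it down by tracking a concrete small example (e.g.\ $\mathbf{n}=(1,1,2,3,4)$ from the Examples) alongside the general argument to avoid an off-by-one error.
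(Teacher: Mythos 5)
Your overall route is the same as the paper's: reduce the question to the existence of columns $\ga,\de\geq\al+2$ in $[c-d]$ whose successors under the bar operation are $\be$ and $\be+1$, and then translate that existence statement via the set $\{\ga_{\al,1},\ldots,\ga_{\al,d}\}$ of Definition \ref{DefinitionEllAlpha}. However, your key reduction contains a genuine error: you insist that the witnesses satisfy $\ga_0<\de_0$ together with $\overline{\ga_0}=\be$ and $\overline{\de_0}=\be+1$, and you later justify the bookkeeping ``by monotonicity of $\og$'' and by the claim that two consecutive columns of $\mat$ always involve distinct blocks. The bar operation is \emph{not} order-preserving (this is precisely the phenomenon that Lemma \ref{LemmaNextIndicesNested} exists to handle), and consecutive columns of $\mat$ can lie in the same block. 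Concretely, for $\mathbf{n}=(2,2,4,4)$ one has $c=12$, $d=4$, $\overline{7}=10$ and $\overline{8}=9$; taking $\al=1$, $\be=9$, the pair $\{(1,2),(9,10)\}$ is the minimal non-face coming from $\LM(Q_{1,2,7,8})=T_{1,2}T_{\overline{7},\overline{8}}=-T_{1,2}T_{9,10}$, yet there is no increasing pair $\ga_0<\de_0$ with $\overline{\ga_0}=9$ and $\overline{\de_0}=10$ (the unique preimages are $8$ and $7$, in the wrong order). Your criterion would classify this pair as a face, contradicting the lemma ($\ell_1=3$ here, and $9\notin\{2,3,4\}\cup\{10,11\}$). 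Moreover, columns $8$ and $9$ of this $\mat$ both involve the block $X_4$, so the parenthetical ``distinct blocks'' claim fails too.

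The repair is to drop the ordering constraint entirely. Since $\overline{\ga}>\ga$ for every $\ga\in[c-d]$, the only way the unordered pair $\{(\al,\al+1),(\be,\be+1)\}$ can be a non-face of type \eqref{MonomialQ} is with $\{\al_0,\be_0\}=\{\al,\al+1\}$ and $\{\overline{\ga_0},\overline{\de_0}\}=\{\be,\be+1\}$; and any two distinct elements of $[c-d]$ exceeding $\al+1$ can be sorted into a valid $4$-tuple together with $\al,\al+1$, no matter how the bar operation permutes their images. Hence the correct criterion is simply that each of $\be$ and $\be+1$ admits a predecessor $\geq\al+2$ in its own block, equivalently that neither $\be$ nor $\be+1$ equals any $\ga_{\al,i}$; from there the translation through Definition \ref{DefinitionEllAlpha}, including the shift by one that you correctly flagged as the delicate point, goes through as in the paper.
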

\begin{proof}
The pair $\{(\al, \al+1),(\be, \be+1)\}$ is  a non-face of type \eqref{MonomialQ}  if and only if there exist
$\ga, \de$  such that $\al+1<\ga,\de<\be$, $\og = \be, \od = \be+1$. 
This is equivalent to $\be, \be+1 \ne \ga_{\al,i}$ for every $i = 1, \ldots, d$,
and by Definition \ref{DefinitionEllAlpha} this happens if and only if $\al + \ell_\al < \be $ and $\be+1 < c-d+\ell_\al$,
yielding the desired conclusion.
\end{proof}

The next proposition gives   a complete description of the facets of $\Delta$.

\begin{prop}\label{PropositionDescriptionFacets}
A subset  $F \subset V$ is a facet of $\Delta$ if and only the  if 
 the Hasse diagram $\mathcal{T}$ of the poset $(F,\subseteq)$ satisfies the following conditions:
\begin{itemize}

\item[(i)] $\mathcal{T}$ is a rooted binary tree   with root $(1,c)$;

\item[(ii)] there exists $\al \in [c-d-2]$ such that the leaves of  $\mathcal{T}$ 
are the intervals
$\left\{ (\beta, \beta+1) \, | \, \beta \in \{\al, \ldots, \al + \ell_{\al}\} \cup \{c-d+\ell_{\al}-1, \ldots, c-1\} \right\}$;

\item[(iii)] if  $\fI\in F$ is a node of  $\mathcal{T}$ with one child $\fI_1$, then 
$\length(\fI) = \length(\fI_1) +1$
 and the unique unitary interval in $\fI\setminus \fI_1$ does not belong to $F$;

\item[(iv)] if  $\fI\in F$ is a node of  $\mathcal{T}$ with two children  $\fI_1,  \fI_2$, 
then $\fI_1\cap  \fI_2= \emptyset$ and $ \length(\fI_1) +\length(\fI_2)=\length(\fI) $.
\end{itemize}
In this case we have  $\Card(F) = c+d$.

\end{prop}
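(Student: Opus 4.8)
The plan is to characterize the facets of $\Delta$ by a direct combinatorial analysis, proving both implications of the equivalence together with the count $\Card(F)=c+d$. Throughout, call two intervals \emph{compatible} if they do not form a non-face of type \eqref{MonomialQ}. First I would record the skeleton of a facet $F$: every vertex $(\al,\be)$ satisfies $(\al,\be)\subseteq(1,c)$, and since $1,c\notin[c-d]$ the interval $(1,c)$ crosses nothing and lies in no non-face, so it belongs to every facet and is the maximum of $(F,\subseteq)$. As $F$ is non-crossing, the members of $F$ containing a fixed interval form a chain, so $(F,\subseteq)$ is a forest; having a maximum, its Hasse diagram $\mathcal{T}$ is a rooted tree with root $(1,c)$. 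Its leaves are the minimal elements of $(F,\subseteq)$, which are unitary by Lemma \ref{LemmaMinimalIntervalsAreUnitary}; and by the maximality of $F$ together with Corollary \ref{CorollaryReductionToMinimalIntervals} (a non-crossing set is a face precisely when its minimal intervals are pairwise faces), the set of leaves is a \emph{maximal} family of pairwise compatible unitary intervals.

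The heart of the matter, which I expect to be the main obstacle, is the classification of such maximal families: they are exactly $\big\{(\be,\be+1)\mid\be\in S_\al\big\}$ with $S_\al=\{\al,\dots,\al+\ell_\al\}\cup\{c-d+\ell_\al-1,\dots,c-1\}$, as $\al$ runs over $[c-d-2]$. By Lemma \ref{LemmaAdmissiblePairsOfUnitaryIntervals}, for $\al\le c-d-2$ the interval $(\al,\al+1)$ is compatible with $(\be,\be+1)$ exactly when $\be\in\{\al+1,\dots,\al+\ell_\al\}\cup\{c-d+\ell_\al-1,\dots,c-1\}$, while every index $\ge c-d-1$ is compatible with all larger indices. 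Proving that each $S_\al$ is itself compatible and maximal reduces to a monotonicity analysis of the integers $\ell_\al$ (equivalently of $\al+\ell_\al$) as $\al$ varies, which is read off from the column-by-column construction of $\mat$ behind Definition \ref{DefinitionEllAlpha}; conversely, given a maximal compatible family $L$ with $\al=\min L$, one gets $\al\le c-d-2$ (otherwise $L\subseteq\{c-d-1,\dots,c-1\}\subsetneq S_{c-d-2}$, contradicting maximality) and then $L\subseteq\big\{(\be,\be+1)\mid\be\in S_\al\big\}$ from compatibility with $(\al,\al+1)$, whence $L=S_\al$. The difficulty lies entirely in converting the intricate description of $\mat$ into clean statements about the $\ell_\al$; the rest is bookkeeping. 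Applied to the leaves of $\mathcal{T}$, this yields condition (ii).

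Next I would analyze the internal nodes using the following device: if $\fI'$ is an interval with $\fI_0\subseteq\fI'\subsetneq\fI$ for some $\fI_0,\fI\in F$ and $\fI'$ crosses nothing in $F$, then $\fI'\in F$ — otherwise maximality of $F$ gives a non-face $\{\fI',\mathcal{K}\}$ of type \eqref{MonomialQ} with $\mathcal{K}\in F$, and since $\fI_0\subseteq\fI'$, Corollary \ref{CorollaryReductionToMinimalIntervals} makes $\{\fI_0,\mathcal{K}\}$ a non-face of type \eqref{MonomialQ}, contradicting $\fI_0,\mathcal{K}\in F$. (In each application $\fI'$ is squeezed between a node and one of its descendants, so it crosses nothing in $F$ automatically.) Taking $\fI'$ to span two consecutive children of a node and lie properly inside it forces $\fI'\in F$, impossible strictly between a child and its parent, so no node has more than two children, which is (i). Taking $\fI'$ to be a node shortened by one unitary interval at an end shows, for a one-child node $\fI$ with child $\fI_1$, that $\length(\fI)=\length(\fI_1)+1$ and that the unitary interval $\fI\setminus\fI_1$ does not lie in $F$ (else it would be a second child), which is (iii); and for a two-child node, that the two children are flush with the ends of $\fI$ with no gap between them, i.e. $\fI_1\cap\fI_2=\emptyset$ and $\length(\fI_1)+\length(\fI_2)=\length(\fI)$, which is (iv). For the cardinality, $S_\al$ has $(\ell_\al+1)+(d-\ell_\al+1)=d+2$ elements, so $\mathcal{T}$ has $d+2$ leaves and hence $d+1$ two-child nodes; telescoping (iii) and (iv) from the leaves up shows that the root, of length $c-1$, satisfies $c-1=(d+2)+(\text{number of one-child nodes})$, so there are $c-d-3$ one-child nodes and $\Card(F)=(d+2)+(d+1)+(c-d-3)=c+d$.

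Finally, for the converse, any $F$ whose Hasse diagram is a tree $\mathcal{T}$ satisfying (i)--(iv) is a facet: it is non-crossing because any two of its intervals lie in disjoint subtrees below their least common ancestor by (iv); it is a face by Corollary \ref{CorollaryReductionToMinimalIntervals}, since its minimal intervals form the compatible family attached to $S_\al$; and it is maximal, because adjoining a unitary interval would enlarge the maximal family $S_\al$, while for a non-unitary $\fI\notin F$ one takes the smallest $\mathcal{K}\in F$ with $\fI\subseteq\mathcal{K}$ and uses (iii)--(iv) for $\mathcal{K}$ to force $\fI=\mathcal{K}\in F$, a contradiction. This yields the stated characterization and the equality $\Card(F)=c+d$.
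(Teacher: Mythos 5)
Your proposal is correct and follows essentially the same route as the paper: the same tree structure derived from the non-crossing condition, the same use of Lemma \ref{LemmaMinimalIntervalsAreUnitary}, Lemma \ref{LemmaAdmissiblePairsOfUnitaryIntervals} and Corollary \ref{CorollaryReductionToMinimalIntervals}, the same ``adjoin an intermediate interval and contradict maximality'' device for (i), (iii), (iv), and the same leaf/node count. The only differences are cosmetic: you prove maximality in the converse direction by hand rather than via the cardinality identity $\Card(F)=c+d$ as the paper does, and the pairwise compatibility of the index set $S_\al$, which you defer to the (true) monotonicity of $\al\mapsto\ell_\al$ and $\al\mapsto\al+\ell_\al$, is likewise left implicit in the paper's appeal to Lemma \ref{LemmaAdmissiblePairsOfUnitaryIntervals}.
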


\begin{example}\label{ExampleNonGrassmanFacetTree}
Let $\mathbf{n}= (2,2,4,4)$, thus $c=12, d=4$ and 
$$
\mat = \left(
\begin{matrix}
x_{1,0} &x_{2,0} & x_{3,0}  &x_{4,0} &x_{3,1} & x_{4,1}  & x_{3,2}  &x_{4,2} & x_{4,3}  & x_{3,3} & x_{2,1} & x_{1,1}   \\
x_{1,1} &x_{2,1} & x_{3,1}  &x_{4,1} & x_{3,2} & x_{4,2}  & x_{3,3} &x_{4,3} & x_{4,4} & x_{3,4} & x_{2,2} & x_{1,2}
\end{matrix}
\right).
$$
We choose $\al=2$ and determine
$$
\ga_{2,1} = 12, \quad \ga_{2,2}=11, \quad \ga_{2,3}=5, \quad \ga_{2,4}=4,\quad \ell_2=3.
$$
The collection of all the open intervals in the picture below is a facet of $\Delta$.
\vspace*{0.3cm}

\begin{center}
\begin{tikzpicture}\label{add}
\draw (1,0.3)--(5,0.3);
\draw (9,0.3)--(11,0.3);

\draw [fill=white] (0,0.3) circle [radius=0.06];
\draw [fill=white] (1,0.3) circle [radius=0.06];
\draw [fill=white] (2,0.3) circle [radius=0.06];
\draw [fill=white] (3,0.3) circle [radius=0.06];
\draw [fill=white] (4,0.3) circle [radius=0.06];
\draw [fill=white] (5,0.3) circle [radius=0.06];
\draw [fill=white] (6,0.3) circle [radius=0.06];
\draw [fill=white] (7,0.3) circle [radius=0.06];
\draw [fill=white] (8,0.3) circle [radius=0.06];
\draw [fill=white] (9,0.3) circle [radius=0.06];
\draw [fill=white] (10,0.3) circle [radius=0.06];
\draw [fill=white] (11,0.3) circle [radius=0.06];

\draw (0,0.6)--(2,0.6);
\draw (4,0.6)--(6,0.6);
\draw (8,0.6)--(10,0.6);

\draw [fill=white] (0,0.6) circle [radius=0.06];
\draw [fill=white] (2,0.6) circle [radius=0.06];
\draw [fill=white] (4,0.6) circle [radius=0.06];
\draw [fill=white] (6,0.6) circle [radius=0.06];
\draw [fill=white] (8,0.6) circle [radius=0.06];
\draw [fill=white] (10,0.6) circle [radius=0.06];

\draw (0,0.9)--(3,0.9);
\draw (4,0.9)--(7,0.9);
\draw (7,0.9)--(10,0.9);

\draw [fill=white] (0,0.9) circle [radius=0.06];
\draw [fill=white] (3,0.9) circle [radius=0.06];
\draw [fill=white] (4,0.9) circle [radius=0.06];
\draw [fill=white] (7,0.9) circle [radius=0.06];
\draw [fill=white] (10,0.9) circle [radius=0.06];

\draw (0,1.2)--(4,1.2);
\draw (7,1.2)--(11,1.2);

\draw [fill=white] (0,1.2) circle [radius=0.06];
\draw [fill=white] (4,1.2) circle [radius=0.06];
\draw [fill=white] (7,1.2) circle [radius=0.06];
\draw [fill=white] (11,1.2) circle [radius=0.06];

\draw (4,1.5)--(11,1.5);
\draw (0,1.8)--(11,1.8);

\draw [fill=white] (4,1.5) circle [radius=0.06];
\draw [fill=white] (11,1.5) circle [radius=0.06];
\draw [fill=white] (0,1.8) circle [radius=0.06];
\draw [fill=white] (11,1.8) circle [radius=0.06];

\node at (0,0) {1};
\node at (1,0) {2};
\node at (2,0) {3};
\node at (3,0) {4};
\node at (4,0) {5};
\node at (5,0) {6};
\node at (6,0) {7};
\node at (7,0) {8};
\node at (8,0) {9};
\node at (9,0) {10};
\node at (10,0) {11};
\node at (11,0) {12};

\end{tikzpicture}
\end{center}
The Hasse diagram of the  poset is

\begin{center}
\begin{tikzpicture}\label{add}

\draw [fill=black] (0,0) circle [radius=0.06];
\node at (0,0.3) {(1,12)};

\draw [fill=black] (-2,-0.5) circle [radius=0.06];
\node at (-2.6,-0.4) {(1,5)};
\draw (-2,-0.5)--(0,0);

\draw [fill=black] (3,-0.5) circle [radius=0.06];
\node at (3.6,-0.4) {(5,12)};
\draw (3,-0.5)--(0,0);

\draw [fill=black] (-1,-1) circle [radius=0.06];
\node at (-1,-1.3) {(4,5)};
\draw (-1,-1)--(-2,-0.5);

\draw [fill=black] (-3,-1) circle [radius=0.06];
\node at (-3.6,-0.9) {(1,4)};
\draw (-3,-1)--(-2,-0.5);

\draw [fill=black] (-4,-1.5) circle [radius=0.06];
\node at (-4.5,-1.5) {(1,3)};
\draw (-4,-1.5)--(-3,-1);

\draw [fill=black] (-2,-1.5) circle [radius=0.06];
\node at (-2,-1.8) {(3,4)};
\draw (-2,-1.5)--(-3,-1);

\draw [fill=black] (-4,-2) circle [radius=0.06];
\node at (-4,-2.3) {(2,3)};
\draw (-4,-2)--(-4,-1.5);

\draw [fill=black] (1,-1) circle [radius=0.06];
\node at (0.5,-0.9) {(5,8)};
\draw (1,-1)--(3,-0.5);

\draw [fill=black] (1,-1.5) circle [radius=0.06];
\node at (0.5,-1.6) {(5,7)};

\draw [fill=black] (1,-2) circle [radius=0.06];
\node at (1,-2.3) {(5,6)};
\draw (1,-2)--(1,-1);

\draw [fill=black] (4,-1) circle [radius=0.06];
\node at (4.6,-0.9) {(8,12)};
\draw (4,-1)--(3,-0.5);

\draw [fill=black] (3,-1.5) circle [radius=0.06];
\node at (2.4,-1.4) {(8,11)};
\draw (4,-1)--(3,-1.5);

\draw [fill=black] (5,-1.5) circle [radius=0.06];
\node at (5,-1.8) {(11,12)};
\draw (4,-1)--(5,-1.5);

\draw [fill=black] (3,-2) circle [radius=0.06];
\node at (2.4,-2.1) {(9,11)};

\draw [fill=black] (3,-2.5) circle [radius=0.06];
\node at (3,-2.8) {(10,11)};
\draw (3,-2.5)--(3,-1.5);

\end{tikzpicture}
\end{center}

\end{example}

\begin{proof}[Proof of Proposition \ref{PropositionDescriptionFacets}]
Assume that $F$ is a facet of $\Delta$.
Then $F$ contains no minimal non-face of type \eqref{MonomialP} or \eqref{MonomialQ}, and it is maximal with respect with this property.

We begin by showing (i).
In order to see that the Hasse diagram $\mathcal{T}$ of $(F,\subseteq)$ is a tree, 
it suffices to observe that for every interval $\fI\in F$ there exists at most one $\fI_1\in F$ with $\fI\subsetneq \fI_1$ and minimal with respect to this property:
if by contradiction there existed two such intervals in $ F$, the facet $F$ would violate the non-crossing condition \eqref{ConditionIntervals}.
Every facet has $(1,c)$ as unique maximal element, since $(1,c)$ does not belong to any non-face, thus $(1,c)$ is the root of $\mathcal{T}$.
Finally, to see that $\mathcal{T}$ is a binary tree, assume by contradiction that it contains a node $(\al,\be)$ with at least three children $(\al_1, \be_1), (\al_2, \be_2),(\al_3, \be_3)$, with
$\al_1 < \be_1 \leq \al_2 < \be_2 \leq\al_3 < \be_3 $.
Let $F'= F\cup\{(\al_1,\be_2)\}$, then $F'$ satisfies \eqref{ConditionIntervals}, so it does not contain a minimal non-face of type \eqref{MonomialP}.
By Corollary \ref{CorollaryReductionToMinimalIntervals}, $F'$ does not contain a minimal non-face of type \eqref{MonomialQ}, contradicting the maximality of the facet $F$.

Next, we prove (ii).
Observe that in general, for any subset $G\subseteq V$ satisfying \eqref{ConditionIntervals}
and any $\beta \in [c-1]$, $G\cup \{(\be, \be+1)\}$ also satisfies \eqref{ConditionIntervals}.
By Lemma \ref{LemmaMinimalIntervalsAreUnitary} the minimal intervals of $F$ are unitary,  let $(\al, \al+1)$ be the one with the least left endpoint $\al$.
We claim that $\al \leq c-d-2$; assume by contradiction that $\al > c-d-2$.
The pair $\{(c-d-2,c-d-1),(\be, \be+1)\}$ is not of type  \eqref{MonomialQ} for any $\be >c-d-2$ by Lemma \ref{LemmaAdmissiblePairsOfUnitaryIntervals};
by Corollary \ref{CorollaryReductionToMinimalIntervals} and Lemma \ref{LemmaMinimalIntervalsAreUnitary}, this implies that $F \cup\{(c-d-2,c-d-1)\}$ is also a face of $\Delta$, contradicting the minimality of $\al$ and yielding  $\al \leq c-d-2$ as claimed.
With the same argument and Lemma \ref{LemmaAdmissiblePairsOfUnitaryIntervals} 
we see that  $(\beta, \beta+1)\in F$ if and only if  $ \beta \in \{\al, \ldots, \al + \ell_{\al}\} \cup \{c-d+\ell_{\al}-1, \ldots, c-1\}$.

Next, we show (iii).
Let   $\fI\in F$ be a node with one child $\fI_1$, and assume there exists an interval $\fI_2\in V$   with $\fI_1\subsetneq \fI_2 \subsetneq \fI$.
Then $F' = F \cup \{ \fI_2\}$ satisfies \eqref{ConditionIntervals} and does not contain a minimal non-face of type \eqref{MonomialQ} by Corollary \ref{CorollaryReductionToMinimalIntervals}, so $F'$ is a face of $\Delta$, contradicting the maximality of the facet $F$. 
Thus such $\fI_2$ cannot exist, and $\length(\fI) = \length(\fI_1) +1$.
By assumption  $\fI_1$ is the only child of $\fI$, so the unique unitary interval in $\fI\setminus \fI_1$ does not belong to $F$.

Finally, we show (iv).
Let $(\al,\be)$ be a node with two children  $(\al_1, \be_1), (\al_2, \be_2)$.
The two intervals are disjoint by \eqref{ConditionIntervals}. 
 Assume by contradiction that $\al<\al_1$, or $ \be_1<\al_2$, or $\be_2 < \be$.
By the same argument used in (i) and (iii), the set $F'$ obtained by adjoining respectively $(\al, \be_1), (\be_1, \be_2)$, or $(\al_2, \be)$ to $F$ is still a face of $\Delta$,  contradiction.

Now we show the opposite direction. 
Assume that $F\subset V$ satisfies properties (i)--(iv), we are going to prove that $F$ is a facet of $\Delta$.

First, we show that $F$ contains no  non-face of type \eqref{MonomialP}, or, equivalently, that $F$ satisfies \eqref{ConditionIntervals}.
Assume by contradiction that $\fI_1 \cap \fI_2 \ne \emptyset$ for two incomparable intervals $\fI_1, \fI_2\in F$.
Since $\mathcal{T}$ is a binary tree there exists a unique smallest interval $\fI\in F$  such that $\fI_1\cup \fI_2 \subseteq \fI$, and this node must have two children 
$\fI_3, \fI_4\in F$ with $\fI_1\subseteq  \fI_3$, $\fI_2\subseteq  \fI_4$.
Clearly $ \fI_1 \cap \fI_2  \subseteq   \fI_3\cap  \fI_4$, so  (iv) yields a contradiction.

Next, we show that $F$ contains no  non-face of type \eqref{MonomialQ}. 
By Corollary \ref{CorollaryReductionToMinimalIntervals} it suffices to show that
$\{\fI_1, \fI_2\}$ is not of type \eqref{MonomialQ} for any two minimal intervals $\fI_1, \fI_2\in F$;
this follows from (ii) and Lemma \ref{LemmaAdmissiblePairsOfUnitaryIntervals}.
Thus $F$ is a face of $\Delta$.

Before proving that  $F$ is a facet, we verify that $\Card(F) = c+d$.
Since $\mathcal{T}$ is a binary tree with  $d+2$ leaves by (ii), 
it has $d+1$ nodes with two children.
By (iii) the nodes with one child correspond to the unitary intervals outside $F$,
 therefore there are $(c-1)-(d+2)$  such nodes.
 The desired formula follows.

Finally, let $F'$ be a facet of $\Delta$ with $F\subseteq F'$.
By the previous paragraph we have $\Card(F)= c+d$; however, 
by the proof of the first direction, we also have $\Card(F')=c+d$, thus $F=F'$ is a facet of $\Delta$. 
\end{proof}

\begin{cor}\label{CorollaryDeltaNonGrassmann}
The complex $\Delta$ is pure  of dimension $c+d-1$.
\end{cor}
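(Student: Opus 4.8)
The plan is to deduce Corollary \ref{CorollaryDeltaNonGrassmann} directly from the structural description of facets in Proposition \ref{PropositionDescriptionFacets}. First I would recall that a simplicial complex is pure of dimension $m$ precisely when every facet has exactly $m+1$ vertices, so it suffices to show that every facet $F$ of $\Delta$ satisfies $\Card(F) = c+d$; then $\dim(\Delta) = \Card(F) - 1 = c+d-1$.

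The key step is exactly the cardinality count that already appears inside the proof of Proposition \ref{PropositionDescriptionFacets}, which I would now isolate and state as the engine of the corollary. Given a facet $F$, condition (i) says the Hasse diagram $\mathcal{T}$ of $(F,\subseteq)$ is a rooted binary tree, and condition (ii) says its leaves are the unitary intervals indexed by $\{\al,\ldots,\al+\ell_\al\}\cup\{c-d+\ell_\al-1,\ldots,c-1\}$; counting these indices gives $(\ell_\al+1) + (d - \ell_\al + 1) = d+2$ leaves. A rooted binary tree with $d+2$ leaves has exactly $d+1$ internal nodes with two children. By condition (iii), the nodes with exactly one child are in bijection with the unitary intervals $(\beta,\beta+1)$ that are \emph{not} in $F$; since there are $c-1$ unitary intervals in $V$ and $d+2$ of them are in $F$, there are $(c-1)-(d+2)$ such one-child nodes. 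Adding up, $\Card(F) = (d+2) + (d+1) + (c-1-d-2) = c+d$, as wanted.

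Since this count is valid for every facet, $\Delta$ is pure of dimension $c+d-1$. The only care needed is that the corollary is stated under the hypothesis $c \geq d+4$ of the current subsection, which is exactly the regime where Proposition \ref{PropositionDescriptionFacets} applies, and where $c-1-d-2 = c-d-3 \geq 1$ so the formula is consistent; I would note this but it requires no real work. Honestly there is no substantial obstacle here: the corollary is a one-line consequence once Proposition \ref{PropositionDescriptionFacets} (in particular the already-proven fact that $\Card(F) = c+d$ for every facet) is in hand, and the proof amounts to invoking that proposition together with the definition of purity.

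\begin{proof}
By Proposition \ref{PropositionDescriptionFacets}, every facet $F$ of $\Delta$ satisfies $\Card(F) = c+d$, hence has dimension $\Card(F) - 1 = c+d-1$. Therefore $\Delta$ is pure of dimension $c+d-1$.
\end{proof}
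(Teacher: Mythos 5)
Your proof is correct and is exactly the paper's (implicit) argument: the corollary is stated without a separate proof precisely because the cardinality count $\Card(F)=c+d$ is already established inside the proof of Proposition \ref{PropositionDescriptionFacets}, and purity of dimension $c+d-1$ follows immediately. Your leaf/internal-node bookkeeping matches the paper's count verbatim.
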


Our final  goal in this subsection is to enumerate the facets of $\Delta$.
To this end we consider the following combinatorial  object.
Let $\al,\be_1,\be_2,\ga\in\mathbb{N}$  with $\be_1>0$, $\al+\be_1+\be_2+\ga=c-1$, and such that $\be_2=0$ if $\ga=0$. 
We split the unitary intervals of $V$ in four strings of two different colors: 
the first $\al$ unitary intervals are white, the next  $\be_1$ are black, the next $\ga$ are white, and the last $\be_2$  are black.
$$
 \underbrace{\square  \square \cdots  \square}_\al \underbrace{\blacksquare \blacksquare \cdots  \blacksquare}_{\be_1} \underbrace{ \square  \square \cdots \square}_\ga \underbrace{\blacksquare \blacksquare \cdots \blacksquare}_{\be_2} 
$$
Let $\Sigma(\al,\be_1,\ga,\be_2)$ be the set of all $F\subseteq V$ that satisfy conditions (i), (iii), (iv) of Proposition \ref{PropositionDescriptionFacets} and whose minimal intervals
 are exactly  the black intervals.

We introduce a trivariate generalization of the Catalan numbers known as  the  \emph{Catalan trapezoids} \cite{Re}:
$$
C_{m}(n,k) = 
 \begin{cases} 
{n+k \choose k}  &\mbox{if } 0\leq k < m ,\\ 
{n+k \choose k} - {n+k \choose k - m}  &\mbox{if } m\leq k < n+m-1, \\ 
0  &\mbox{if } k>  n+m-1.
\end{cases}
$$

\begin{lemma}\label{LemmaEnumerationSigmaA}
We have $\Card (\Sigma(\al,\be_1,\ga,\be_2)) =C_{\al+1}(\be_1+\ga+\be_2-1,\al+\be_1+\be_2-1)$.
\end{lemma}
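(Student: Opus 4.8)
The strategy is to set up a bijection between $\Sigma(\al,\be_1,\ga,\be_2)$ and a family of lattice paths (or, equivalently, ballot-type sequences) enumerated by the Catalan trapezoid $C_{\al+1}(\be_1+\ga+\be_2-1,\al+\be_1+\be_2-1)$. First I would reformulate conditions (i), (iii), (iv) of Proposition \ref{PropositionDescriptionFacets} as a recursive ``parenthesization'' of the string of $c-1$ unitary intervals: a set $F\in\Sigma(\al,\be_1,\ga,\be_2)$ amounts to a full binary tree whose leaves are exactly the $\be_1+\be_2$ black intervals, where each internal node covers a contiguous block of consecutive unitary-interval positions (by (iv) its two children partition that block into two adjacent sub-blocks, and by (iii) a chain of one-child nodes ``absorbs'' a run of white intervals one step at a time). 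Thus the white intervals do not appear as leaves but get threaded in along the one-child edges, and the entire structure is determined by (a) the underlying binary bracketing of the black leaves and (b) a choice, compatible with the tree shape, of where each white unitary interval is inserted.

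**The main combinatorial step.** The crux is to show that the number of such configurations is the stated Catalan trapezoid entry. I would do this by induction on $c$ (equivalently on the total number of intervals), splitting on the root $(1,c)$: either the root has one child — which, by (iii), forces the first or last unitary interval to be white and peels off one white interval, reducing $(\al,\be_1,\ga,\be_2)$ to $(\al-1,\be_1,\ga,\be_2)$ or to $(\al,\be_1,\ga,\be_2-1)$ — or the root has two children, which splits the colored string at some admissible position into a left part and a right part, each satisfying (i), (iii), (iv) with its own minimal (black) intervals, giving a convolution of two smaller $\Sigma$-counts. Matching this recursion against the known recursion for the Catalan trapezoids $C_m(n,k)$ (from \cite{Re}), and checking the base cases (e.g. $\be_1+\be_2=1$, forcing a single chain) together with the degenerate constraint $\be_2=0$ when $\ga=0$, completes the induction. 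An alternative, possibly cleaner, route is a direct bijection: encode $F$ as a lattice path from $(0,0)$ with unit up/right steps where black intervals contribute one type of step and white intervals the other, the tree-compatibility of white-interval insertions translating exactly into the boundedness condition $k\le n+m-1$ that defines the trapezoid; I would try the recursion first and fall back on the explicit path model if the algebra of matching the two recursions gets unwieldy.

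**Where the difficulty lies.** The main obstacle is bookkeeping: carefully extracting from (i), (iii), (iv) the precise set of ``admissible splits'' for a two-child root and the precise conditions under which a one-child root is allowed, so that the resulting recursion on $(\al,\be_1,\ga,\be_2)$ genuinely coincides term-by-term with the recursion satisfied by $C_{\al+1}(\be_1+\ga+\be_2-1,\al+\be_1+\be_2-1)$ — in particular handling the asymmetric roles of the two black strings (the $\be_1$-string is nonempty, the $\be_2$-string may be empty and only when $\ga>0$) and the corresponding asymmetry in the trapezoid's offset parameter $m=\al+1$ versus its two arguments $n=\be_1+\ga+\be_2-1$ and $k=\al+\be_1+\be_2-1$. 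Once the recursion and boundary behavior are pinned down, the induction itself is routine; I would present the binary-tree reformulation and the split analysis in detail and then invoke the Catalan trapezoid recursion to close.
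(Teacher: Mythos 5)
Your overall frame (reduce to a recursion on the colored string of unitary intervals and match it against the Catalan trapezoid recursion of \cite{Re}) is the right one, but the decomposition you choose does not produce the recursion you need, and this is where the real difficulty of the lemma sits — it is not ``routine bookkeeping.'' Splitting at the root $(1,c)$ gives, in the two-children case, a \emph{convolution}: a sum over admissible split points of products $\Card(\Sigma(\text{left}))\cdot\Card(\Sigma(\text{right}))$. The Catalan trapezoids, however, are characterized by the two-term Pascal-type recursion $C_m(n,k)=C_m(n-1,k)+C_m(n,k-1)$ together with boundary conditions. Matching a convolution against a Pascal recursion amounts to proving a convolution identity for Catalan trapezoids that is neither in \cite{Re} nor standard, and you neither state nor prove it; your fallback (``an explicit path model'') is likewise not carried out. (There is also a small slip in your one-child case: peeling a white interval off the right end reduces $\gamma$, and is only possible when $\be_2=0$; it never reduces $\be_2$, since when $\be_2>0$ the last unitary interval is black, hence a leaf of $\mathcal{T}$ and an element of $F$.)

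The paper's proof avoids the root entirely. It isolates a specific \emph{adjacent} white-black pair $(\fw,\fb)$ of unitary intervals and partitions $\Sigma(\al,\be_1,\ga,\be_2)$ into two classes according to whether the length-two interval covering $\fw$ and $\fb$ belongs to $F$; each class is put in bijection with a single $\Sigma$-set whose parameters are shifted by one (``recolor $\fb$ white'' in one case, ``delete the length-two interval and collapse $\fw$'' in the other). This yields exactly a two-term recursion of the form $\Card(\Sigma)=\Card(\Sigma')+\Card(\Sigma'')$, which lines up term-by-term with the trapezoid recursion. The base cases are handled separately: a single black interval forces $(F,\subseteq)$ to be a saturated chain, counted by lattice paths as $\binom{\al+\ga}{\al}$, and $\al=\ga=\be_2=0$ is the Grassmann case counted by $C_{\be_1-1}$. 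To repair your argument you would either have to adopt this local-swap decomposition, or independently establish the convolution identity for $C_m(n,k)$ that your root decomposition requires.
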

\begin{proof}
We  claim that $\Card (\Sigma(\al,\be_1,\ga,\be_2)) = \chi(\al,\be_1+\be_2,\ga)$ for some
 function $\chi:\mathbb{N}^3\rightarrow\mathbb{N}$  satisfying $\chi(\al,1,0)=\chi(0,1,\ga)=1$ and if $\ga>0,\be>1$
\begin{equation}
\tag{$\ast$}\chi(\al,\be,\ga) = \chi(\al,\ga-1,\be)+\chi(\al,\ga+1,\be-1).
\end{equation}
This is enough to prove the Lemma, 
as the numbers $C_{m}(n,k)$ 
are characterized by the recursion
$
C_m(n,k)=C_m(n-1,k)+C_m(n,k-1)
$
and the boundary conditions $C_m(n,0)=1, C_m(0,k)=1$ for $k\leq m-1$,  $C_m(n,k)=0 $ for $k>n+m-1$, cf. \cite{Re}. 

For any $F\in \Sigma(\al,1,\ga,0)$ the poset $(F,\subseteq)$ is a saturated chain of intervals starting at  $(\al+1,\al+2)$ and ending at $(1,c)$.
For each pair $\fI_1 \subseteq \fI_2$ of consecutive intervals in this chain, $\fI_1$ and $\fI_2$  share one endpoint. 
The left endpoint is shared by two consecutive intervals exactly $\ga$ times, and the right endpoint exactly $\al$ times. 
We get a bijection between $\Sigma(\al,1,\ga,0)$ and the set of lattice paths from $(0,0)$ to $(\al,\ga)$, 
therefore
$
\Card (\Sigma(\al,1,\ga,0)) = {\al+\ga \choose \al}
$.
This verifies the claim when $\be_1=1,\be_2=0$.

Now suppose $\be_1>1$.
The case $\Sigma(0,\be_1,0,0)$ is exactly the Grassmann case, where all the unitary intervals belong to every facet,
therefore $\Card(\Sigma(0,\be_1,0,0))=C_{\be_1-1}$.
Assume $\al>0$, we compute $ \Card (\Sigma(\al,\be_1,0,0))$ using a binary partition.
Let $\fw$ be the last white unitary interval and $\fb$ the first black one:
$$
 \square  \square \cdots \square \overset{\fw}{\square}  \overset{\fb}{\blacksquare} \blacksquare \cdots \blacksquare \blacksquare .
$$
For every $F\in\Sigma(\al,\be_1,0,0)$ exactly one of the following two cases occurs:
\begin{enumerate}
\item
$F \setminus \{\fb\}\in \Sigma(\al+1,\be_1-1,0,0)$. 
Equivalently: every interval in $F$ of length $> 1$ containing $\fb$ also contains some other black unitary interval.
Equivalently: the interval $\fI$ of length 2 that contains $\fb,\fw$ is not in $F$.

\item 
$F \setminus \{\fb\}\notin \Sigma(\al+1,\be_1-1,0,0)$. 
Equivalently: there exists an interval in $F$ of length $> 1$ containing only the black interval $\fb$.
Equivalently: the interval $\fI$ of length 2 that contains $\fb,\fw$ is in $F$.
\end{enumerate}
We define two separate bijections in the two cases.

\begin{enumerate}

\item
We associate $F \mapsto F\setminus \{\fb\} $, i.e. ``we make $\fb$ white''.
This gives a map from case (1) of  $\Sigma(\al,\be_1,0,0)$ to $\Sigma(\al+1,\be_1-1,0,0)$. 
We define a map in the opposite direction by $G \mapsto G \cup \{\fb\}$.
These two maps are inverse to each other, and thus we have a bijection between the two sets.

\item We associate $F \mapsto \tilde{F}$ obtained by throwing $\fI$ away and collapsing $\fw$.
Now this gives a map from case (2) of  $\Sigma(\al,\be_1,0,0)$ to $\Sigma(\al-1,\be_1,0,0)$.
We define a map in the opposite direction by adding a white unitary interval $\fw$ next to $\fb$ and adding the interval $\fI$.
Again, these 2 maps are inverse to each other, and we have a bijection between the two sets.
\end{enumerate}
We deduce that $\Card(\Sigma(\al, \be_1,0,0)) = \Card(\Sigma(\al+1, \be_1-1,0,0))+ \Card(\Sigma(\al-1, \be_1,0,0))$
and by induction this verifies the claim when $\ga=\be_2=0$.

Now suppose that $\be_1>1$ and $\ga>0$;
we compute $\Card(\Sigma(\al,\be_1,\ga,\be_2))$ by means of the same binary partition as in the previous step, but on a different pair of unitary intervals $\fb,\fw$.
Namely, 
we distinguish the rightmost black unitary interval $\fb$ which is adjacent to a white unitary interval $\fw$.
If $\be_2=0$  we have
$$
\square  \square \cdots  \square \blacksquare \blacksquare \cdots \blacksquare  \overset{\fb}{\blacksquare} \overset{\fw}{\square}  \square \cdots \square.
$$
whereas if $\be_2>0$ we have 
$$
\square  \square \cdots  \square \blacksquare \blacksquare \cdots  \blacksquare \square  \square \cdots \square \overset{\fw}{\square}  \overset{\fb}{\blacksquare} \blacksquare \cdots \blacksquare .
$$
For every $F\in \Sigma(\al,\be_1,\ga,\be_2)$ we have   two cases analogous to (1), (2) as above, 
and the same two bijections yield
$
\Card (\Sigma(\al,\be_1,\ga,0)) = \Card (\Sigma(\al,\be_1,\ga-1,0))+ \Card (\Sigma(\al,\be_1-1,\ga+1,0))
$
and 
$\Card (\Sigma(\al,\be_1,\ga,\be_2)) = \Card (\Sigma(\al,\be_1,\ga-1,\be_2))+ \Card (\Sigma(\al,\be_1,\ga+1,\be_2-1))$ if $\be_2>0$.
By induction, the proof of the claim is completed.
\end{proof}

\begin{cor}\label{CorollaryEnumerationFacetsNonGrassmann}
The number of facets of  $\Delta$ is 
$$
f_{c+d}(\Delta) = \sum_{\al=1}^{c-d-2}{c+d-1\choose \al+d}-(c-d-2){c+d-1 \choose d}.
$$
\end{cor}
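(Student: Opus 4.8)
The plan is to partition the facets of $\Delta$ according to the integer $\al$ singled out by Proposition \ref{PropositionDescriptionFacets}(ii), count the facets in each class with Lemma \ref{LemmaEnumerationSigmaA}, and then sum. Indeed, by Proposition \ref{PropositionDescriptionFacets} every facet $F$ satisfies condition (ii) for a unique $\al \in [c-d-2]$ — namely the least left endpoint of a minimal interval of $F$ — and distinct values of $\al$ produce distinct leaf sets (they have distinct minima), so the facets of $\Delta$ are partitioned into classes $\Phi_\al$, $\al = 1,\ldots,c-d-2$, where $\Phi_\al$ consists of the facets whose leaves are the unitary intervals prescribed by (ii) for that value of $\al$.

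Next I would identify $\Phi_\al$ with a set of the form $\Sigma(\cdot)$. By Proposition \ref{PropositionDescriptionFacets}, $\Phi_\al$ is exactly the collection of subsets $F \subseteq V$ satisfying conditions (i), (iii), (iv) whose minimal intervals are the unitary intervals $(\be,\be+1)$ with $\be \in \{\al,\ldots,\al+\ell_\al\} \cup \{c-d+\ell_\al-1,\ldots,c-1\}$. Colouring these unitary intervals black and the rest white, this is precisely the defining description of $\Sigma(\al-1,\,\ell_\al+1,\,c-d-\al-2,\,d-\ell_\al+1)$: positions $1,\ldots,\al-1$ are white, positions $\al,\ldots,\al+\ell_\al$ are black, positions $\al+\ell_\al+1,\ldots,c-d+\ell_\al-2$ are white, and positions $c-d+\ell_\al-1,\ldots,c-1$ are black. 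Using $1\le \al\le c-d-2$ and $2 \le \ell_\al\le d+1$ (Definition \ref{DefinitionEllAlpha}) one checks that these four string lengths are non-negative, that $\be_1=\ell_\al+1>0$, and that they sum to $c-1$; moreover for $\al<c-d-2$ the middle white string is nonempty, so the constraint on $\Sigma$ is satisfied, while for $\al=c-d-2$ the two black strings are adjacent and one uses $\Sigma(c-d-3,d+2,0,0)$ instead. In either case $\Card(\Phi_\al)$ equals the cardinality of the corresponding $\Sigma$.

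Now I would invoke Lemma \ref{LemmaEnumerationSigmaA}. Writing the $\Sigma$-parameters as $(\al',\be_1',\ga',\be_2')$, the arguments of the Catalan trapezoid simplify to $\al'+1 = \al$, $\be_1'+\ga'+\be_2'-1 = c-\al-1$, and $\al'+\be_1'+\be_2'-1 = \al+d$, so that
\[
\Card(\Phi_\al) = C_{\al}(c-\al-1,\ \al+d).
\]
Since $d\ge 1$ and $\al\le c-d-2$ we have $\al \le \al+d \le c-2 = (c-\al-1)+\al-1$, so this value falls in the middle range of the Catalan trapezoid (the boundary $\al+d=(c-\al-1)+\al-1$ being attained exactly when $\al=c-d-2$, where the same formula applies since it vanishes at the next argument), whence
\[
\Card(\Phi_\al) = \binom{c+d-1}{\al+d} - \binom{c+d-1}{(\al+d)-\al} = \binom{c+d-1}{\al+d} - \binom{c+d-1}{d}.
\]

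Finally, summing over $\al = 1,\ldots,c-d-2$ gives
\[
f_{c+d}(\Delta) = \sum_{\al=1}^{c-d-2}\left(\binom{c+d-1}{\al+d} - \binom{c+d-1}{d}\right) = \sum_{\al=1}^{c-d-2}\binom{c+d-1}{\al+d} - (c-d-2)\binom{c+d-1}{d},
\]
as claimed. The one genuinely delicate step is the correct identification of the four string lengths of $\Sigma$ in terms of $\al$ and $\ell_\al$, together with checking that the degenerate instance $\al=c-d-2$ causes no trouble; once the parameters are matched, the rest is the explicit evaluation of the Catalan trapezoid from Lemma \ref{LemmaEnumerationSigmaA} and a one-line summation.
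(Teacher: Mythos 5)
Your proof is correct and follows essentially the same route as the paper: partition the facets by the leftmost unitary interval $(\al,\al+1)$, identify each class with $\Sigma(\al-1,\ell_\al+1,c-d-\al-2,d-\ell_\al+1)$, and evaluate via Lemma \ref{LemmaEnumerationSigmaA}. The only difference is that you spell out details the paper leaves implicit, namely the matching of the four string lengths and the boundary instance $k=n+m-1$ of the Catalan trapezoid (including the degenerate case $\al=c-d-2$), both of which you handle correctly.
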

\begin{proof}
Denoting by $\Sigma_\al$ the set of facets of $\Delta$ whose leftmost unitary interval is $(\al,\al+1)$, by definition we have
  $\Sigma_\al = \Sigma(\al-1, \ell_\al+1,c-\al-d-2,d-\ell_\al+1) $. 
By Proposition \ref{PropositionDescriptionFacets} (ii) and    Lemma \ref{LemmaEnumerationSigmaA}  we obtain 
\begin{align*}
f_{c+d}(\Delta) &=  \sum_{\al=1}^{c-d-2}\Card(\Sigma_\al) = \sum_{\al=1}^{c-d-2}\left( {c+d-1\choose \al+d}-{c+d-1 \choose d}\right)
\\
& = \sum_{\al=1}^{c-d-2}{c+d-1\choose \al+d}-(c-d-2){c+d-1 \choose d}.
\end{align*}
\end{proof}

\subsection{Defining equations of $\FIg$}

We  combine the results on $\Delta$  with those from \cite{BCV,CHV} to  prove the main theorem  of this section.

\begin{thm}\label{TheoremGrobnerFiber}
The defining ideal $\mK$ of the special fiber ring of a rational normal scroll  is minimally generated by the polynomials 
$\PP$ with  $(\al,  \be, \ga, \de)\in {[c] \choose 4}$ and 
$\QQ$ with $(\al,  \be, \ga, \de)\in {[c-d] \choose 4}$,
and they form a squarefree quadratic Gr\"obner basis with respect to the term order $\prec$. 
\end{thm}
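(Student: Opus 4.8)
The plan is to prove this statement by a dimension/Hilbert-function count that pins down the initial ideal, following the classical strategy: if $G$ is a set of polynomials in $\mK$ whose leading monomials generate an ideal $\mI$, then $G$ is a Gröbner basis (and hence generates $\mK$) as soon as $\SF/\mK$ and $\SF/\mI$ have the same Hilbert function; moreover the inclusion $\iin_\prec(\mK)\supseteq\mI$ is automatic, so the only thing to check is the reverse inequality on Hilbert functions, i.e. $\HS(\SF/\mI)\leq \HS(\SF/\mK)$ coefficientwise. First I would record that by Proposition \ref{PropLeadingTerms} the polynomials $\PP$ and $\QQ$ have leading monomials exactly the degree-two squarefree monomials generating $\mI$, so $\iin_\prec(\mK)\supseteq\mI$ and it suffices to show the two Hilbert functions agree. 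Since $\SF/\mI$ is the Stanley–Reisner ring of the flag complex $\Delta$, its Hilbert function is determined by the $f$-vector of $\Delta$, which the preceding subsections have computed: $\Delta$ is pure, of the stated dimension, and its number of facets is given by Proposition \ref{PropositionDeltaGrassmann} in the Grassmann case and Corollary \ref{CorollaryEnumerationFacetsNonGrassmann} in the non-Grassmann case.

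The second ingredient is an independent computation of $\HS(\FIg)$. Here I would invoke the results of \cite{CHV,BCV}: the special fiber ring $\FIg\cong\FIf$ of a rational normal scroll is already known — in particular its Hilbert function is independent of the characteristic and can be read off from the toric degeneration constructed in \cite{CHV} for the balanced case, together with the fact (from \cite{BCV}, via the fiber-type property and linear powers) that the Hilbert function of $\FIf$ depends only on $c$ and $d$, not on the individual $n_i$. Concretely, $\dim \FIg = c+d$ in the non-Grassmann range $c\geq d+4$ (respectively $2c-3$ in the Grassmann range $c<d+4$), matching $\dim\Delta+1$ from Corollary \ref{CorollaryDeltaNonGrassmann} (resp. Proposition \ref{PropositionDeltaGrassmann}), and the multiplicity of $\FIg$ equals $f_{\dim\Delta}(\Delta)$. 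Since $\SF/\mI$ is Cohen–Macaulay when $\Delta$ is shellable — which one gets from the explicit tree description in Proposition \ref{PropositionDescriptionFacets}, ordering facets so that the leftmost-unitary-interval parameter $\al$ is non-decreasing and refining by the binary-partition recursion of Lemma \ref{LemmaEnumerationSigmaA} — the Hilbert series of $\SF/\mI$ is of the form $h(t)/(1-t)^{\dim}$ with $h(1)$ equal to the facet count, and the same numerical data forces $\HS(\SF/\mI)=\HS(\FIg)$ once we know agreement in Krull dimension, multiplicity, and the quadratic generation (the degree-$2$ part of $\mK$ is spanned by $\PP,\QQ$ by construction).

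For the minimality of the generating set, I would argue that none of the listed quadrics lies in the ideal generated by the others: after passing to leading terms this reduces to checking that no monomial \eqref{MonomialP} or \eqref{MonomialQ} divides another, equivalently that the minimal non-faces of $\Delta$ are genuinely the pairs indexed by \eqref{MonomialP}–\eqref{MonomialQ}, which is how $\Delta$ was defined; a short case analysis (using Notation \ref{NotationNextIndex} and Lemma \ref{LemmaAdmissiblePairsOfUnitaryIntervals}) shows no containment relations among these quadratic monomials, so each $\PP$ and each $\QQ$ is a minimal generator of $\iin_\prec(\mK)$ and hence of $\mK$. The main obstacle I anticipate is the Cohen–Macaulayness / shellability of $\Delta$ in the non-Grassmann case: the facet structure in Proposition \ref{PropositionDescriptionFacets} is intricate (binary trees with the two-colored string of leaves), and producing an explicit shelling order — or otherwise certifying that $\SF/\mI$ has no ``extra'' lower-dimensional pieces inflating its Hilbert function beyond what the facet count predicts — is the delicate point; everything else is bookkeeping with Catalan trapezoids and the already-established facts about $\FIg$. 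An alternative to shellability, if that proves unwieldy, is to compare Hilbert series degree by degree in low degrees and then use that both rings are standard graded quadratic with the same Krull dimension and multiplicity, but the cleanest route is the shelling.
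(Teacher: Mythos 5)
Your overall strategy is the paper's: establish $\mI\subseteq\iin_\prec(\mK)$ from Proposition \ref{PropLeadingTerms}, then force equality by comparing numerical invariants of $\SF/\mI$ (computed combinatorially from $\Delta$) with those of $\FIg$ (known from \cite{CHV} in the balanced case and transported to all scrolls via \cite{BCV}, since the Hilbert function depends only on $c,d$). However, you route the comparison through the full Hilbert function and therefore feel compelled to prove that $\Delta$ is shellable, which you correctly flag as the delicate unresolved point. This is a genuine gap in your write-up --- no shelling order is produced or verified --- and it is also an unnecessary detour. The paper's key observation is that much less suffices: $\mI$ is a squarefree (hence radical) monomial ideal, and $\Delta$ is pure (Proposition \ref{PropositionDeltaGrassmann} and Corollary \ref{CorollaryDeltaNonGrassmann}), so $\mI$ is \emph{unmixed}. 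For an unmixed radical ideal contained in $\iin_\prec(\mK)$, the associativity formula for multiplicities shows that agreement of Krull dimension and multiplicity alone forces $\mI=\iin_\prec(\mK)$: the quotient $\iin_\prec(\mK)/\mI$, being a submodule of $\SF/\mI$, would otherwise contribute to the top-dimensional multiplicity. So purity, which you have already established, does all the work that you are asking shellability to do.

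A second, smaller issue: your claim that ``agreement in Krull dimension, multiplicity, and the quadratic generation forces $\HS(\SF/\mI)=\HS(\FIg)$'' is not a valid implication on its own --- two standard graded algebras can share dimension, multiplicity, and degree-two Hilbert function without having equal Hilbert series. The implication only holds because of the one-sided containment $\mI\subseteq\iin_\prec(\mK)$ together with the unmixedness (or Cohen--Macaulayness) of $\SF/\mI$, exactly as above; you should make that mechanism explicit rather than appeal to ``the same numerical data.'' Your treatment of minimality is fine: since all generators are quadrics with pairwise distinct leading monomials and $\mK$ contains no linear forms, the Gr\"obner basis is automatically a minimal generating set.
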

\begin{proof}
Since $\PP,\QQ\in \mK$  the inclusion  $ \mI \subseteq \iin_\prec(\mK)$ holds.
The ideal $\mI$ is clearly squarefree. 
Moreover, it is unmixed as $\Delta$ is a pure simplicial complex by Proposition \ref{PropositionDeltaGrassmann} and Corollary \ref{CorollaryDeltaNonGrassmann};
it follows from the associativity formula for multiplicities that  $ \mI = \iin_\prec(\mK)$ if and only if the  factor rings
$\SF/\mI$ and $ \SF/ \iin_\prec(\mK)$ have the same Krull dimension and multiplicity.
In other words, it suffices to show that $\dim (\SF/\mI) = \dim (\FIg) $ and  $e(\SF/\mI) = e(\FIg)$.

These invariants  are determined  for the Stanley-Reisner ring $\SF/\mI$ in 
Proposition \ref{PropositionDeltaGrassmann}  and Corollaries \ref{CorollaryDeltaNonGrassmann}, \ref{CorollaryEnumerationFacetsNonGrassmann},
and they only depend on $c,d$.
They agree with those of $\FIg$ when the scroll $\mathcal{S}_{n_1,\ldots,n_d}$ is balanced \cite[Section 4]{CHV}.
However, by \cite[Theorem 3.7]{BCV}  the Hilbert function and hence the multiplicity of the special fiber ring of a  scroll depend only on $c,d$, 
and thus we get the equality  $ \mI = \iin_\prec(\mK)$ for any rational normal scroll.

We conclude that $\{P_\ba,Q_\bb\,|\, \ba\in {[c]\choose 4}, \bb\in {[c-d]\choose 4}\}$ is a Gr\"obner basis of $\mK$, and in particular a (minimal) set of generators.
\end{proof}

Recall that a standard graded $\mathbb{K}$-algebra $A$ is   \emph{Koszul} if $\Tor_i^A(\mathbb{K},\mathbb{K})_j = 0 $  whenever $i\ne j$,
and that this condition holds whenever $A$ is presented by a Gr\"obner basis of quadrics.
We refer to \cite{Co} for details.

\begin{cor}\label{CorollarySpecialFiberKoszul}
The special fiber ring of a rational normal scroll is Koszul.
\end{cor}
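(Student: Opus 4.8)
The plan is to deduce Corollary \ref{CorollarySpecialFiberKoszul} as an immediate consequence of the Gr\"obner basis result just established. By Theorem \ref{TheoremGrobnerFiber}, the defining ideal $\mK$ of $\FIg$ has a Gr\"obner basis consisting of the quadrics $\PP$ and $\QQ$ with respect to the term order $\prec$; in particular $\mK$ admits a quadratic Gr\"obner basis. It is a standard fact (see \cite{Co}) that if a standard graded $\mathbb{K}$-algebra $A = \SF/\mK$ is defined by an ideal possessing a Gr\"obner basis of quadrics, then $A$ is Koszul: one passes to the initial ideal $\iin_\prec(\mK)$, which is a monomial ideal generated in degree $2$, observes that $\SF/\iin_\prec(\mK)$ is Koszul because quadratic monomial ideals are well known to yield Koszul (indeed G-quadratic, hence Koszul) algebras, and then invokes the principle that Koszulness of the initial algebra implies Koszulness of $A$ itself, since the Betti numbers of $A$ are bounded above by those of $\SF/\iin_\prec(\mK)$.

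Concretely, the proof is one sentence: \emph{By Theorem \ref{TheoremGrobnerFiber}, $\FIg$ is presented by a Gr\"obner basis of quadrics, hence it is Koszul by the criterion recalled above.} Since $\FIg \cong \FIf$ and the special fiber ring is independent of the chosen presentation, this gives the statement for the special fiber ring of any rational normal scroll. There is essentially nothing to prove beyond citing the general criterion, which the excerpt has already recalled in the paragraph immediately preceding the corollary.

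I do not anticipate any genuine obstacle here: the entire content has been front-loaded into Theorem \ref{TheoremGrobnerFiber}, whose proof combined the combinatorial analysis of the simplicial complex $\Delta$ (Propositions \ref{PropositionDeltaGrassmann} and \ref{PropositionDescriptionFacets}, Corollaries \ref{CorollaryDeltaNonGrassmann} and \ref{CorollaryEnumerationFacetsNonGrassmann}) with the Hilbert function computations of \cite{BCV,CHV}. The only mild subtlety worth a word is making sure the reader understands that a \emph{quadratic} Gr\"obner basis (not merely a quadratic generating set) is what forces Koszulness — but this is exactly what Theorem \ref{TheoremGrobnerFiber} provides, so the corollary follows at once.
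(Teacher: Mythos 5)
Your proposal is correct and is exactly the paper's argument: Corollary \ref{CorollarySpecialFiberKoszul} is an immediate consequence of Theorem \ref{TheoremGrobnerFiber} together with the standard fact (recalled in the paper just before the corollary, with reference to \cite{Co}) that an algebra presented by a Gr\"obner basis of quadrics is Koszul. No further comment is needed.
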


\begin{remark}\label{RemarkInitialGrassmann}
In the special case when the ideal $\mK$ is only generated by the Pl\"ucker relations, i.e. when $c<d+4$, 
$\FIg$ is the coordinate ring of the Grassmann variety $\mathbb{G}(1,c-1) \subseteq \mathbb{P}^{ {c \choose 2} -1}$.
Our initial complex $\Delta$ is different from the classical one in the theory of straightening laws \cite{MS,SW}:
in that context the leading monomial of $\PP$ is $T_{\al,\de} T_{\be,\ga}$ and 
the resulting initial complex is a \emph{non-nesting} complex,
cf. \cite{PPS,SSW} for related considerations.
The initial complex $\Delta'$ constructed in \cite{CHV} for balanced scrolls  also follows the classical choice, and in fact it is different from our $\Delta$ for all values $c\geq 4, d\geq 1$.
For example, it can be seen that $\Delta'$ has 4 cone points if $c < d+4$, 2 if $c = d+4$, and none if $c > d+4$, 
whereas $\Delta$ has  $c$ cone points if $c<d+4$, $c-2 $ if $c=d+4$, and at least one if $c \geq d+4$.
\end{remark}

\section{The defining equations of the Rees ring}\label{SectionRees}

The first result of this section gives the defining equations of the Rees ring of a rational normal scroll.
It is proved in \cite[Theorem 3.7]{BCV} that the ideal $I$ is of 	\emph{fiber type},
that is, 
$\mJ$ is generated in bidegrees $(\ast,1)$ and $(0,\ast)$.
In other words, $\RI$  is defined by the equations of $\FIg$ and of the symmetric algebra of $I$.
Combining this  with Theorem \ref{TheoremGrobnerFiber} we  obtain:

\begin{thm}\label{TheoremGeneratorsRees}
The defining ideal $\mJ$ of the Rees ring of $I$ is minimally generated by the polynomials 
$\PP$ with  $(\al,  \be, \ga, \de)\in {[c] \choose 4}$, 
$\QQ$ with  $(\al,  \be, \ga, \de)\in {[c-d] \choose 4}$, and $\LL, \MM$ with 
$(\al,  \be, \ga)\in {[c] \choose 3}$.
\end{thm}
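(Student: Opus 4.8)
The generation part is nearly immediate once the results upstream are in place. By \cite[Theorem 3.7]{BCV} the ideal $I$ is of fiber type, meaning that $\mJ$ is generated by its bihomogeneous components of bidegree $(\ast,1)$ and $(0,\ast)$; by definition the former generate the ideal $\mL$ of the symmetric algebra of $I$, while the latter span $\mJ\cap\SF=\mK$, so that $\mJ=\mL+\mK\SR$. Now $\mL$ is generated by the linear syzygies $\LL$ and $\MM$ --- this is exactly the Eagon--Northcott description of the first syzygies of the maximal minors of $\matX$ recalled in Section~\ref{SectionRelations} (see also \cite{BV,MS}) --- while $\mK$ is generated by $\PP$ and $\QQ$ by Theorem~\ref{TheoremGrobnerFiber}. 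Combining these yields $\mJ=(\LL,\MM,\PP,\QQ)$ with the stated index ranges.

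For minimality I would argue entirely by bidegree. The elements $\LL,\MM$ all have bidegree $(1,1)$ and the elements $\PP,\QQ$ all have bidegree $(0,2)$; these two bidegrees are incomparable, so neither family can help generate in the bidegree of the other. More precisely, writing $\mathfrak m=(x_{i,j},T_{\al,\be})$ for the maximal bigraded ideal of $\SR$, every generator of $\mJ$ has bidegree $(a,1)$ with $a\geq 1$ --- there are none of bidegree $(0,1)$, since $\mJ_{(0,1)}\subseteq\mJ\cap\SF=\mK$ and $\mK$ has no linear elements by Theorem~\ref{TheoremGrobnerFiber} --- or bidegree $(0,b)$ with $b\geq 2$; a short check shows that such a generator, multiplied by a nonconstant element of $\SR$, never lands in bidegree $(1,1)$ or $(0,2)$. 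Hence $(\mathfrak m\mJ)_{(1,1)}=(\mathfrak m\mJ)_{(0,2)}=0$, so the number of minimal generators of $\mJ$ in bidegree $(1,1)$ is $\dim_{\mathbb K}\mJ_{(1,1)}=\dim_{\mathbb K}\mL_{(1,1)}$ and in bidegree $(0,2)$ is $\dim_{\mathbb K}\mJ_{(0,2)}=\dim_{\mathbb K}\mK_{(0,2)}$. The second number equals the number of the $\PP$ and $\QQ$ by the minimality assertion of Theorem~\ref{TheoremGrobnerFiber}. The first equals $2{c\choose 3}$, the number of the $\LL$ and $\MM$, because the Eagon--Northcott complex resolving the ideal of maximal minors of $\matX$ is a \emph{minimal} free resolution --- its differentials have entries in the homogeneous maximal ideal of $R$ --- so $\mL$ is minimally generated by exactly $2{c\choose 3}$ linear forms. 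Finally, by fiber type the only remaining bidegrees in which $\mJ$ could carry minimal generators are $(a,1)$ with $a\geq 2$ and $(0,b)$ with $b\geq 3$, and these are excluded since $\mL$ is generated in bidegree $(1,1)$ and $\mK$ in bidegree $(0,2)$, giving $\mJ_{(a,1)}\subseteq\mathfrak m\mL$ and $\mJ_{(0,b)}\subseteq\mathfrak m\mK$ there.

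I do not expect a genuine obstacle: the two hard inputs --- the fiber type property of $I$ and the squarefree quadratic Gr\"obner basis of $\mK$ from Theorem~\ref{TheoremGrobnerFiber} --- are already established, and what is left is the standard minimality of the Eagon--Northcott resolution together with elementary bigraded bookkeeping. The one point demanding some care is keeping precise track of which bidegrees of $\mJ$ can be reached from $\mathfrak m\mJ$, since this is exactly what forces all of $\mJ_{(1,1)}$ and $\mJ_{(0,2)}$ to consist of minimal generators; once this is in hand, the generation statement of the Main Theorem for $\RI$ follows by transporting everything through the isomorphism $\Theta$ of Section~\ref{SectionTermOrder}.
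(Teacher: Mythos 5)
Your proposal is correct and follows the same route as the paper: the fiber type property from \cite[Theorem 3.7]{BCV} reduces everything to the symmetric algebra ideal $\mL$ (generated by the Eagon--Northcott linear syzygies $\LL,\MM$) and the special fiber ideal $\mK$ (handled by Theorem~\ref{TheoremGrobnerFiber}). The paper leaves the minimality verification implicit, whereas you spell out the bigraded bookkeeping; that added detail is sound and consistent with what the paper intends.
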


We are going to show that these polynomials form a Gr\"obner basis of $\mJ$.
Unlike the case of $\mK$, we already know  they generate $\mJ$ 
so we can accomplish this goal via Buchberger's Criterion.
A priori there are ${4+1 \choose 2 } =10$ types of S-pairs to examine,
however, thanks to Proposition \ref{PropositionBypassSpairs},
 we  will only need to produce explicit equations of S-pair reduction for the 3 types $\S(Q,M)$, $\S(Q,L)$, $\S(L,M)$.

We begin with an analysis of the syzygies  of $\mJ$.
This is
motivated by the following observation, which supplies a practical  method to prove the reduction of  an S-pair to 0.
Denote by $\Supp(\cdot)$ the set of monomials appearing in a polynomial.

\begin{remark}\label{RemarkReductionSpair}
Suppose we have a polynomial ring equipped with a  term order, and an equation 
$$
m_1F_1+ \sum_{i= 2}^M m_i F_i = n_1 G_1 + \sum_{j=2}^N n_j G_j
$$
where $m_i, n_j$ are monomials, $F_i,G_j$ are polynomials,
and $\lcm\big(\LM(F_1),\LM(G_1)\big) = m_1\LM(F_1) = n_1\LM(G_1)$.
If the following conditions hold

\begin{itemize}

\item the highest two monomials in $\cup_i \Supp(m_iF_i)$ appear only in $m_1 F_1$;

\item the highest monomial in $\cup_j \Supp(n_jG_j)$ appears only in $n_1 G_1$;

\item the second highest monomial in $\cup_j \Supp(n_jG_j)$ appears only in $n_2G_2$;
\end{itemize}
then  $\S(F_1,G_1)$ reduces to 0 modulo the set $\{F_i,G_j\}$.
In fact, we have $$
\S(F_1,G_1) = \sum_{j=2}^N n_j G_j -\sum_{i=2}^M m_i F_i 
$$
with $\LM(\S(F_1,G_1))\geq \LM(n_j G_j),\LM(m_i F_i)$ for $i,j\geq 2$.
\end{remark}

We adopt a more flexible notation, allowing arbitrary tuples as subscripts of  equations.
Denote by $\mathfrak{S}_n$ the symmetric group on $[n]$ and by $\sigma(\cdot)$ the sign of  a permutation.

\begin{notation}\label{NotationPermutations}
We extend Notation \ref{NotationIndicesVariables} to
 equations  \eqref{equationL},  \eqref{equationM}, \eqref{equationP} by setting
 $$
 L_{\al_{i_1},\al_{i_2},\al_{i_3}} := {\sigma( \bi)}  L_{\al_{1},\al_{2},\al_{3}},
 \quad 
 M_{\al_{i_1},\al_{i_2},\al_{i_3}} := {\sigma (\bi)}  M_{\al_{1},\al_{2},\al_{3}},
$$
$$
 P_{\be_{j_1},\be_{j_2},\be_{j_3}, \be_{j_4}} := {\sigma (\bj)}  P_{\be_{1},\be_{2},\be_{3}, \be_{4}}
$$
for all $(\al_{1},\al_{2},\al_{3} )\in {[c]\choose 3}, (\be_{1},\be_{2},\be_{3}, \be_{4})\in {[c]\choose 4}$,
 $({i_1},{i_2},{i_3})\in \mathfrak{S}_3, ({j_1},{j_2},{j_3}, {j_4}) \in \mathfrak{S}_4$,
$$
 L_{\al_{1},\al_{2},\al_{3}} := 0,
 \quad 
 M_{\al_{1},\al_{2},\al_{3}} := 0,
 \quad 
 P_{\be_{1},\be_{2},\be_{3}, \be_{4}} := 0
$$
 whenever 
 $\Card\{{\al_1},{\al_2},{\al_3}\}<3, \Card\{{\be_1},{\be_2},{\be_3}, {\be_4}\}<4.$
\end{notation}

\begin{lemma}
For every $(\al_1,\al_2,\al_3,\al_4,\al_5)\in {[c-d]\choose 5}$
there are two syzygies
\begin{equation}\label{S1}\tag{S1}
 \sum{\sigma(\bi)} T_{\overline{\al_{i_1}},\overline{\al_{i_2}}}M_{\al_{i_3},\al_{i_4},\al_{i_5}}=
\sum{\sigma(\bj)} \mu_{2,\al_{j_1}} Q_{\al_{j_2},\al_{j_3},\al_{j_4},\al_{j_5}}
\end{equation}
\begin{equation}\label{S2}\tag{S2}
 \sum {\sigma(\bi)} T_{{\al_{i_1}},{\al_{i_2}}}M_{\overline{\al_{i_3}},\overline{\al_{i_4}},\overline{\al_{i_5}}}=
\sum {\sigma(\bj)}\mu_{2,\overline{\al_{j_1}}} Q_{\al_{j_2},\al_{j_3},\al_{j_4},\al_{j_5}}
 \end{equation}
where the sums range over all $\bi=(i_1, i_2, i_3, i_4, i_5), \bj=(j_1, j_2, j_3, j_4, j_5) \in \mathfrak{S}_5$
 with $i_1<i_2, i_3< i_4< i_5$ and $j_2< j_3 < j_4< j_5$.

For every
$(\al_1,\al_2,\al_3,\al_4)\in {[c-d]\choose 4}$ and $ \ep\in[c]$
there are three syzygies
\begin{equation}
\label{S3}\tag{S3}
\sum {\sigma(\bi)}T_{{\al_{i_1}},{\al_{i_2}}}L_{\overline{\al_{i_3}},\overline{\al_{i_4}},\ep}=
Q_{\al_1,\al_2,\al_3,\al_4}\mu_{1,\ep}
+ \sum  {\sigma(\bj)} M_{\al_{j_1},\al_{j_2},{\al_{j_3}}} T_{\overline{\al_{j_4}},\ep}
\end{equation}
\begin{equation}
\label{S4}\tag{S4}
  \sum {\sigma(\bi)} T_{\overline{\al_{i_1}},\overline{\al_{i_2}}}M_{{\al_{i_3}},{\al_{i_4}},\ep} =
Q_{\al_1,\al_2,\al_3,\al_4}\mu_{2,\ep}
  +\sum {\sigma(\bj)}L_{\overline{\al_{j_1}},\overline{\al_{j_2}},\overline{\al_{j_3}}} T_{{\al_{j_4}},\ep}
\end{equation}
\begin{align*}
\label{S5}\tag{S5}
&\sum {\sigma(\bi)}T_{\overline{\al_{i_1}},\overline{\al_{i_2}}}L_{{\al_{i_3}},{\al_{i_4}},\ep}
 +  \sum {\sigma(\bh)} M_{{\al_{h_1}},{\al_{h_2}},\ep} T_{\al_{h_3},\overline{\al_{h_4}}}
\\
& \!- \sum \sigma(\bj)  P_{\al_{j_1},\al_{j_2},\al_{j_3},\ep}\mu_{2,\overline{\al_{j_4}}}+ \sum \sigma(\bj) M_{\al_{j_1},{\al_{j_2}},{\al_{j_3}}} T_{\overline{\al_{j_4}},\ep}\\
&\!=\! \mu_{1,\ep}Q_{\al_1,\al_2,\al_3,\al_4}  \!+\!  \sum  {\sigma(\bh)}  L_{\overline{\al_{h_1}},\overline{\al_{h_2}},\al_{h_3}}T_{\al_{h_4},\ep}
\!+\!\sum {\sigma(\bh)} T_{\al_{h_1},\al_{h_2}}  M_{{\al_{h_3}},\overline{\al_{h_4}},\ep}
\end{align*}
where  the sums range over all permutations
$\bi = (i_1,i_2,i_3,i_4), \bj= (j_1, j_2, j_3,j_4),\bh= (h_1, h_2, h_3,h_4)\in \mathfrak{S}_4$
with $i_1<i_2, i_3<i_4, j_1<j_2<j_3, h_1<h_2$.

For every $(\al_1,\al_2,\al_3,\al_4)\in {[c]\choose 4}$ there is a syzygy
\begin{equation}\label{S6}\tag{S6}
\sum \sigma(\bi)  L_{\al_{i_1},\al_{i_2},\al_{i_3}} \mu_{2,{\al_{i_4}}}
=-\sum \sigma(\bi) M_{\al_{i_1},\al_{i_2},\al_{i_3}}  \mu_{1,{\al_{i_4}}}
\end{equation}
where the sums range over  all permutations $\bi = (i_1, i_2, i_3, i_4) \in \mathfrak{S}_4$ with $i_1<i_2<i_3$.
\end{lemma}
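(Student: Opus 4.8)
The plan is to verify each of \eqref{S1}--\eqref{S6} directly as a polynomial identity in $\SR$, using the determinantal origin of the generators together with the relation $\mu_{1,\oa}=\mu_{2,\al}$, which holds for $\al\le c-d$ by the construction of $\mat$. Two preliminary observations will be used throughout. First, adjoin auxiliary variables $z_{1,1},\dots,z_{1,c},z_{2,1},\dots,z_{2,c}$ to $R$ and consider the substitution $\phi\colon\SR\to R[z_{h,\al}]$ that fixes the $x_{i,j}$ and sends $T_{\al,\be}\mapsto z_{1,\al}z_{2,\be}-z_{1,\be}z_{2,\al}$: it carries $\LL$ (resp.\ $\MM$) to the Laplace expansion along the first row of the $3\times3$ determinant with rows $(\mu_{1,\al},\mu_{1,\be},\mu_{1,\ga})$ (resp.\ $(\mu_{2,\al},\mu_{2,\be},\mu_{2,\ga})$), $(z_{1,\al},z_{1,\be},z_{1,\ga})$ and $(z_{2,\al},z_{2,\be},z_{2,\ga})$, and it is injective on the span of the monomials of degree $\le1$ in the variables $T_{\al,\be}$ (as one sees by comparing coefficients of the distinct $z$-monomials). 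Second, for $\al\le c-d$ the relation $\mu_{1,\oa}=\mu_{2,\al}$ yields $L_{\oa,\ob,\ep}=\mu_{2,\al}T_{\ob,\ep}-\mu_{2,\be}T_{\oa,\ep}+\mu_{1,\ep}T_{\oa,\ob}$ and $L_{\oa,\ob,\og}=\mu_{2,\al}T_{\ob,\og}-\mu_{2,\be}T_{\oa,\og}+\mu_{2,\ga}T_{\oa,\ob}$, and more generally lets one trade a $\mu_1$-coefficient indexed by a barred column for a $\mu_2$-coefficient indexed by the corresponding unbarred column. All signs below are those fixed in Notation \ref{NotationPermutations}.

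I would begin with \eqref{S6}, which has degree $1$ in the $T$'s, so by the injectivity above it suffices to check it after applying $\phi$. Its left-hand side then becomes the Laplace expansion along the last row of the $4\times4$ determinant $D$ whose rows are $(\mu_{1,\al_1},\dots,\mu_{1,\al_4})$, $(z_{1,\al_1},\dots,z_{1,\al_4})$, $(z_{2,\al_1},\dots,z_{2,\al_4})$ and $(\mu_{2,\al_1},\dots,\mu_{2,\al_4})$, while its right-hand side becomes $-\det D'$, where $D'$ is $D$ with its first and last rows interchanged; since $\det D'=-\det D$, the two sides coincide and \eqref{S6} follows.

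Next I would treat \eqref{S3} and \eqref{S4}. Expanding each $\LL$ on the left of \eqref{S3} by the formulas above, the monomials carrying the factor $\mu_{1,\ep}$ collect exactly to $\mu_{1,\ep}\QQ$ --- this is precisely the defining expansion \eqref{equationQ} of $\QQ$ --- which accounts for the first term on the right; it then remains to verify
\[
\sum\sigma(\bi)\,T_{\al_{i_1},\al_{i_2}}\bigl(\mu_{2,\al_{i_3}}T_{\overline{\al_{i_4}},\ep}-\mu_{2,\al_{i_4}}T_{\overline{\al_{i_3}},\ep}\bigr)=\sum\sigma(\bj)\,M_{\al_{j_1},\al_{j_2},\al_{j_3}}T_{\overline{\al_{j_4}},\ep},
\]
which, after expanding $\MM$ and writing both sides in the monomial basis $\mu_{2,\al_k}T_{\al_p,\al_q}T_{\overline{\al_l},\ep}$, is a finite check of sign matching over a $4$-element index set, not depending on the particular scroll. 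The identity \eqref{S4} is obtained symmetrically, now isolating $\mu_{2,\ep}\QQ$ by means of the expression for $L_{\oa,\ob,\og}$ above and of the invariance of $\QQ$ under interchanging its barred and its unbarred index pair.

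Finally, \eqref{S1}, \eqref{S2} and \eqref{S5} follow the same scheme: expand every $\LL$ and $\MM$; the monomials having the shape of the defining expansion \eqref{equationQ} of $\QQ$ collect --- with the correct signs, using $\mu_{1,\oa}=\mu_{2,\al}$ wherever a $\mu_1$- and a $\mu_2$-coefficient must be matched --- to the $\QQ$-terms on the opposite side, leaving a universal multilinear identity among the $T$'s with $\mu_{h,\al}$-coefficients, which is verified by comparing the coefficient of each monomial on the two sides. For \eqref{S1} and \eqref{S2} the relevant index set has five elements and the residual identity is of Pl\"ucker type; \eqref{S5}, which involves the most terms, may alternatively be deduced by combining \eqref{S3} (or \eqref{S4}) with the Pl\"ucker relations \eqref{equationP} and the definition of $\QQ$. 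The main obstacle, throughout, is expected to be nothing conceptual but only the careful bookkeeping of the signs $\sigma(\bi),\sigma(\bj),\sigma(\bh)$ through these expansions, most delicate in the case of \eqref{S5}.
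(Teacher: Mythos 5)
Your proposal is correct and follows essentially the same route as the paper's proof: expand every $L$, $M$, $P$, $Q$ into monomials in the $\mu$'s and $T$'s, use $\mu_{1,\oa}=\mu_{2,\al}$ to match the two kinds of coefficients, and verify that both sides reduce to the same signed sum over permutations. The only (cosmetic) difference is your treatment of \eqref{S6}, which you package as a Laplace expansion of a $4\times 4$ determinant after the substitution $T_{\al,\be}\mapsto z_{1,\al}z_{2,\be}-z_{1,\be}z_{2,\al}$, whereas the paper simply writes out the common expansion $\sum \sigma(\bi)\,\mu_{1,\al_{i_1}}\mu_{2,\al_{i_2}}T_{\al_{i_3},\al_{i_4}}$ directly.
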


\begin{proof}
Both sides of  \eqref{S1} are equal to   $\sum{\sigma(\bh)}\mu_{2,\al_{h_1}} T_{{\al_{h_2}},{\al_{h_3}}} T_{\overline{\al_{h_4}},\overline{\al_{h_5}}}$ 
where the sum ranges over all  $\bh=(h_1, h_2, h_3, h_4, h_5)\in\mathfrak{S}_5$ with $h_2<h_3$, $h_4<h_5$.
Likewise, both sides of \eqref{S2} are equal to  $\sum{\sigma(\bh)}\mu_{2,\overline{\al_{h_1}}} T_{{\al_{h_2}},{\al_{h_3}}} T_{\overline{\al_{h_4}},\overline{\al_{h_5}}}$.

Since  $\mu_{1, \overline{\al_i}}=\mu_{2,\al_i}$,
both sides of \eqref{S3} are equal  to
$$
\sum \sigma(i_1,i_2,i_3,i_4) T_{{\al_{i_1}},{\al_{i_2}}}T_{\overline{\al_{i_3}},\overline{\al_{i_4}}}\mu_{1,\ep} - 
\sum \sigma(j_1,j_2,j_3,j_4) T_{{\al_{j_1}},{\al_{j_2}}}T_{\overline{\al_{j_3}},\ep}\mu_{1,\overline{\al_{j_4}}} 
$$
with sums ranging over all permutations with $i_1<i_2, i_3<i_4, j_1<j_2$.
Likewise, both sides of \eqref{S4} are equal to 
$$
\sum \sigma(i_1,i_2,i_3,i_4) T_{\overline{\al_{i_1}},\overline{\al_{i_2}}}T_{{\al_{i_3}},{\al_{i_4}}}\mu_{2,\ep} -
\sum \sigma(j_1,j_2,j_3,j_4) T_{\overline{\al_{j_1}},\overline{\al_{j_2}}}T_{{\al_{j_3}},\ep}\mu_{2,{\al_{j_4}}}.
$$
Both sides of \eqref{S5} are equal to
\begin{align*}
&\sum\sigma(\bi)T_{\overline{\al_{i_1}},\overline{\al_{i_2}}}T_{{\al_{i_3}},{\al_{i_4}}}\mu_{1,\ep}
-
\sum\sigma(\bj) T_{\overline{\al_{j_1}},\overline{\al_{j_2}}}T_{\al_{j_3},\ep}\mu_{1,{\al_{j_4}}}+
\sum \sigma(\bj)T_{{\al_{j_1}},{\al_{j_2}}}T_{{\al_{j_3}},\overline{\al_{j_4}}}\mu_{2,\ep}
-\\
&\sum \sigma(\bh) T_{{\al_{h_1}},\overline{\al_{h_2}}}T_{\al_{h_3},\ep}\mu_{2,{\al_{h_4}}}-
 \sum \sigma(\bj)T_{\al_{j_1},\al_{j_2}}T_{\al_{j_3},\ep}\mu_{2,\overline{\al_{j_4}}} -
\sum \sigma(\bj) T_{\al_{j_1},\al_{j_2}} T_{\overline{\al_{j_3}},\ep} \mu_{2,{\al_{j_4}}}
\end{align*}
with sums ranging over all permutations with  $i_1<i_2, i_3< i_4, j_1<j_2$.

Finally, both sides of \eqref{S6} equal $\sum \sigma(\bi) \mu_{1,{\al_{i_1}}}\mu_{2,\al_{i_2}} T_{\al_{i_3},\al_{i_4}}$
with sum over  all permutations $\bi$ such that $i_3<i_4$.
\end{proof}

\begin{lemma}\label{LemmaOrderBars}
Let $\al, \be, \ga \in [c-d]$. 
If $\oa < \ob < \og$ then  $\be > \min(\al,\ga)$.
\end{lemma}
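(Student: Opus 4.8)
The plan is to prove the contrapositive. Since $\tau$ is a permutation the map $\al\mapsto\oa$ is injective, so the hypothesis $\oa<\ob<\og$ forces $\al,\be,\ga$ to be pairwise distinct, and $\be\le\min(\al,\ga)$ is then the same as $\be<\al$ and $\be<\ga$. I will assume $\be<\al$ and $\be<\ga$ and derive that $\oa<\ob<\og$ is impossible. For $\al\in[c-d]$ write the $\al$-th column of $\mat$ as $x_{i_\al,j_\al}$ on the top row, so that $0\le j_\al\le n_{i_\al}-2$ and $\oa$ is the column of $\mat$ equal to $x_{i_\al,j_\al+1}$. Two cases occur: \emph{case A}, when $j_\al+1\le n_{i_\al}-2$, so that $\oa\le c-d$; and \emph{case B}, when $j_\al=n_{i_\al}-2$, so that $\oa$ is one of the last $d$ columns, sitting at position $c-i_\al+1$. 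The facts I would use repeatedly are: the first $c-d$ columns of $\mat$ are ordered lexicographically by (power level, block index); among the last $d$ columns a larger index corresponds to a smaller block; $\ell\mapsto i(\ell)$ is non-decreasing; and $i<i'$ implies $n_i\le n_{i'}$.

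The key step is the claim: \emph{if $\be<\al$ and $\ob$ is in case A, then $\ob<\oa$.} To prove it I would split according to whether $j_\be<j_\al$, or $j_\be=j_\al$ and $i_\be<i_\al$. In the first sub-case $\ob$ has power level $j_\be+1\le j_\al$, strictly less than the power level $j_\al+1$ of $\oa$ when $\oa$ is in case A, while $\ob\le c-d<\oa$ when $\oa$ is in case B; in either situation $\ob<\oa$. In the second sub-case, $\ob$ being in case A means $n_{i_\be}\ge j_\al+3$, hence $n_{i_\al}\ge j_\al+3$ since $i_\be<i_\al$, so $\oa$ is also in case A; now $\ob$ and $\oa$ have the same power level $j_\al+1$ but $\ob$ has the smaller block index, so $\ob<\oa$.

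Given the claim, the hypothesis $\oa<\ob$ forces $\ob$ to be in case B, so $\ob=c-i_\be+1$ and the $\be$-th column is $x_{i_\be,n_{i_\be}-2}$. Because case-B indices are $>c-d$ while case-A indices are $\le c-d$, the inequality $\ob<\og$ then forces $\og$ into case B as well, so $\og=c-i_\ga+1$ with $i_\ga<i_\be$, and the $\ga$-th column is $x_{i_\ga,n_{i_\ga}-2}$. Now I would use $\be<\ga$ once more: comparing the two columns $x_{i_\be,n_{i_\be}-2}$ and $x_{i_\ga,n_{i_\ga}-2}$ lexicographically by (power level, block index), the tie-break $i_\be<i_\ga$ is excluded since $i_\ga<i_\be$, so $\be<\ga$ forces $n_{i_\be}<n_{i_\ga}$, contradicting $n_{i_\ga}\le n_{i_\be}$, which holds because $i_\ga<i_\be$. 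This contradiction completes the argument.

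I expect the only delicate point to be the claim, where one must handle the case-A/case-B dichotomy for $\ob$ and $\oa$ simultaneously and in harmony with the two different orderings on the columns of $\mat$ (lexicographic on the first $c-d$, reversed-by-block on the last $d$); once $\ob$ is pinned into case B, the final contradiction is forced by the monotonicity of $i(\cdot)$ and of the $n_i$.
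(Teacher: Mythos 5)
Your proposal is correct and follows essentially the same route as the paper: argue by contradiction from $\be<\al$ and $\be<\ga$, observe that $\ob>\oa$ forces $\ob$ (and hence $\og$) into the last $d$ columns of $\mat$, and then derive a contradiction from $\be<\ga$ using the reversed block order and the monotonicity of the $n_i$. The paper states these two steps in one line; you have simply supplied the case analysis behind them.
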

\begin{proof}
Note that $\be \ne \al, \ga$.
Suppose that $\be<\al,\ga$.
By construction of $\mat$,
  $\ob>\oa$ forces $\ob>c-d$,
 but we cannot have $\be < \ga $ and $c-d< \ob < \og$, contradiction.
\end{proof}

\begin{prop}\label{PropositionSpairQM}
The S-pairs of type $\S(Q,M)$ reduce to 0.
\end{prop}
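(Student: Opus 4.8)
The goal is to show that for every pair of generators of the two shapes $\QQ$ and $\MM$, the S-pair $\S(Q,M)$ reduces to zero modulo the generating set of $\mJ$ described in Theorem \ref{TheoremGeneratorsRees}. By Proposition \ref{PropLeadingTerms} the relevant leading monomials are $\LM(Q_{\al,\be,\ga,\de}) = T_{\al,\be}T_{\og,\od}$ and $\LM(M_{\al,\be,\ga}) = \mu_{2,\be}T_{\al,\ga}$. An S-pair is only nontrivial when these leading monomials share a common factor, so the plan is first to enumerate the possible overlaps: the $\mu$-factor of $\LM(M)$ cannot meet $\LM(Q)$, so the overlap must occur among the $T$-variables, i.e. $T_{\al,\ga}$ must coincide with $T_{\al',\be'}$ or with $T_{\og',\od'}$ for the $Q$-indices. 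This yields a short list of index configurations, and for each one I would identify the Plücker/non-Plücker/symmetric-algebra relations that enter the reduction.

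The main tool is Remark \ref{RemarkReductionSpair}: rather than computing the normal form directly, I would exhibit for each overlap an explicit syzygy-type identity of the form $m_1 F_1 + \sum_{i\geq 2} m_i F_i = n_1 G_1 + \sum_{j\geq 2} n_j G_j$ where $F_1 = M$, $G_1 = Q$ (or vice versa), $\lcm(\LM(F_1),\LM(G_1)) = m_1\LM(F_1) = n_1\LM(G_1)$, and then verify the three bullet conditions of Remark \ref{RemarkReductionSpair} on the supports. The syzygies \eqref{S1} and \eqref{S2} are exactly tailored for this: they express a signed sum of products $T M$ against a signed sum of products $\mu_2\, Q$, which is precisely the ambient relation one needs when $\LM(M)$ and $\LM(Q)$ overlap in a $T$-variable. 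So the core of the argument is: given the overlap, select the appropriate specialization of \eqref{S1} or \eqref{S2} (choosing the five indices $\al_1<\cdots<\al_5$ so that the distinguished $Q$ and $M$ appear as the $G_1$ and $F_1$ terms), and then check that after clearing the $\lcm$ the leading terms cancel and every remaining summand has strictly smaller leading monomial. Lemma \ref{LemmaOrderBars} will be needed to control the relative order of the barred indices $\oa,\ob,\og$ against the unbarred ones, which is what makes the support estimates in Remark \ref{RemarkReductionSpair} come out correctly; this is where the careful design of the term order $\prec$ in Definition \ref{DefinitionTermOrder} pays off.

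The hardest part, I expect, is the bookkeeping: verifying the three support conditions of Remark \ref{RemarkReductionSpair} requires knowing, for each monomial appearing in \eqref{S1}/\eqref{S2}, its position relative to the others under $\prec$, and this is a genuinely term-order-sensitive computation involving the refinement by $\sdeg$ and $\mdeg$. One must show that the two highest monomials on the ``$M$ side'' occur only in the distinguished $m_1 M$ term, and that the top two monomials on the ``$Q$ side'' are isolated in $n_1 Q$ and $n_2 Q'$ respectively. Handling the possibility that some specialized indices collide (forcing $Q$ or $M$ terms to vanish by Notation \ref{NotationPermutations}) is a secondary nuisance: in the degenerate cases one either gets a trivial S-pair or the identity \eqref{S1}/\eqref{S2} collapses to a shorter relation that still satisfies the hypotheses of Remark \ref{RemarkReductionSpair}. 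Once the generic configuration is handled, the degenerate sub-cases follow by the same recipe with fewer terms, so the proposition follows.
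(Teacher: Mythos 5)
Your plan follows the paper's proof essentially verbatim: the same two-case split according to whether the $T$-factor of $\LM(M)$ overlaps $T_{\al,\be}$ or $T_{\og,\od}$ in $\LM(Q)$, the same use of the syzygies \eqref{S1} and \eqref{S2} combined with Remark \ref{RemarkReductionSpair}, and the same appeal to Lemma \ref{LemmaOrderBars} to place the barred index in the second case. All that remains is the explicit identification of the top three monomials on each side of the chosen syzygy and the check that they occur only in the prescribed summands, which is exactly the bookkeeping your outline anticipates and the paper carries out.
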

\begin{proof}
Given a linear S-pair $\S(\QQ,M_{\ep,\zeta,\eta})$ with $(\al,\be,	\ga,\de)\in{[c-d]\choose 4}, (\ep,\zeta,\eta)\in {[c]\choose 3}$,
by Proposition \ref{PropLeadingTerms} there are two cases.

If $\{\ep,\eta\} = \{\al,\be\}$ then  $\S(M_{\ep,\zeta,\eta},\QQ) = T_{\og,\od}M_{\al,\zeta,\be} - \mu_{2,\zeta}\QQ$.
We consider  the syzygy \eqref{S1} for indices  $\al < \zeta < \be < \ga < \de\leq c-d$.
By  Definition \ref{DefinitionTermOrder},
the highest three monomials on either side of \eqref{S1} are those whose $\mdeg$ is equal to $\ee_{\al+d} + \ee_{\zeta+d}+ \ee_{\be+d}+ \ee_{\og+d}+ \ee_{\od+d}$,
specifically the three monomials
$$\mu_{2,\zeta}T_{\al,\be}T_{\og,\od} \,\succ \,\mu_{2,\be}T_{\al, \zeta}T_{\og,\od}\, \succ \,\mu_{2,\al}T_{\zeta,\be}T_{\og,\od}.$$
On the left-hand side of \eqref{S1}, they appear only in $T_{\og,\od} M_{\al, \zeta, \be}$, whereas on the right-hand side 
$\mu_{2,\zeta}T_{\al,\be}T_{\og,\od} $ appears only in $\mu_{2,\zeta} \QQ$ and 
$\mu_{2,\be}T_{\al, \zeta}T_{\og,\od}$ appears only in $\mu_{2,\be}Q_{\al,\zeta,\ga,\de}$.
By Remark \ref{RemarkReductionSpair}, the S-pair reduces to 0.

If  $\{\ep,\eta\}=\{\og,\od\}$ then   $\S(M_{\ep,\zeta,\eta},\QQ) = T_{\al,\be}M_{\ep, \zeta,\eta} - \mu_{2,\zeta}\QQ$.
Note that we must have   $\zeta = \overline{\theta}$ for some $\theta\ne \de$, 
and by Lemma \ref{LemmaOrderBars} we  have $\theta > \gamma$.
We consider the syzygy \eqref{S2} for indices $\al <\be<\ga < \theta, \de \leq c-d$.
The highest  monomials on either side of \eqref{S2} are those whose $\mdeg$ is equal to $\ee_{\al+d} + \ee_{\be+d}+ \ee_{\og+d}+ \ee_{\od+d}+ \ee_{\overline{\theta}+d}$,
specifically the three monomials
$$\mu_{2,\overline{\theta}}T_{\al,\be}T_{\og,\od} \,\succ \,\mu_{2,\od}T_{\al, \be}T_{\og,\overline{\theta}}\, \succ \,\mu_{2,\og}T_{\zeta,\be}T_{\overline{\theta},\od}.$$
On the left-hand side of \eqref{S2}, they appear only in $T_{\al,\be} M_{\ep, \zeta, \eta}$, whereas on the right-hand side 
$\mu_{2,\overline{\theta}}T_{\al,\be}T_{\og,\od} $ appears only in $\mu_{2,\zeta} \QQ$ and 
$\mu_{2,\od}T_{\al, \be}T_{\og,\overline{\theta}}$ appears only in $\mu_{2,\od}Q_{\al,\be,\ga,\theta}$.
By Remark \ref{RemarkReductionSpair}, the S-pair reduces to 0.
\end{proof}

\begin{prop}\label{PropositionSpairQL}
The S-pairs of type $\S(Q,L)$ reduce to 0.
\end{prop}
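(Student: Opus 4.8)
The plan is to follow the template of the proof of Proposition \ref{PropositionSpairQM}, now using the syzygies \eqref{S3}, \eqref{S4}, \eqref{S5} in place of \eqref{S1}, \eqref{S2}. First I would fix an S-pair $\S(\QQ, L_{\ep,\zeta,\eta})$ with $(\al,\be,\ga,\de)\in{[c-d]\choose 4}$ and $(\ep,\zeta,\eta)\in{[c]\choose 3}$ and isolate the cases in which it is nontrivial. By Proposition \ref{PropLeadingTerms} we have $\LM(\QQ)=T_{\al,\be}T_{\og,\od}$, a product of two $T$-variables, whereas by Remark \ref{RemarkLeadingL} the monomial $\LM(L_{\ep,\zeta,\eta})$ carries exactly one $T$-variable, namely $\mu_{1,\ep}T_{\zeta,\eta}$ in general and, only when $\eta>c-d$, possibly the exceptional monomial $\mu_{1,\eta}T_{\ep,\zeta}$. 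Hence $\S(\QQ,L_{\ep,\zeta,\eta})$ is nontrivial precisely when this $T$-factor of $\LM(L_{\ep,\zeta,\eta})$ is $T_{\al,\be}$ or $T_{\og,\od}$, which leaves a short list of cases, organized by which of the two $T$-variables of $\LM(\QQ)$ is shared and by which of the two possible forms $\LM(L_{\ep,\zeta,\eta})$ takes. For each case I would record the precise shape of the S-pair, e.g.\ $\S(L_{\ep,\al,\be},\QQ)=T_{\og,\od}L_{\ep,\al,\be}\mp\mu_{1,\ep}\QQ$, noting the constraints on $\ep,\zeta,\eta$ imposed by the requirement that $L_{\ep,\zeta,\eta}$ be an actual generator and that the relevant instances of $\overline{\,\cdot\,}$ not collide; Lemmas \ref{LemmaNextIndicesNested} and \ref{LemmaOrderBars} supply these constraints, playing here the role that the inequality $\theta>\ga$ played in the proof of Proposition \ref{PropositionSpairQM}.

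For the cases where $\LM(L_{\ep,\zeta,\eta})$ has its generic form, I would exhibit the reduction to $0$ using the syzygy \eqref{S3} when the shared variable is $T_{\al,\be}$ and \eqref{S4} when it is $T_{\og,\od}$ (after using the identity $\mu_{1,\overline{\,\cdot\,}}=\mu_{2,\cdot}$). In either case one specializes the free indices $\al_1<\al_2<\al_3<\al_4$ of the chosen syzygy to $\al,\be,\ga,\de$ up to a single transposition, and the auxiliary index to the remaining index of $L_{\ep,\zeta,\eta}$, so that the common leading monomial of the specialized syzygy is $\lcm\big(\LM(\QQ),\LM(L_{\ep,\zeta,\eta})\big)$. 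Exactly as in Proposition \ref{PropositionSpairQM}, the dominant monomials on either side of the syzygy are the ones of maximal $\mdeg$ in the sense of Definition \ref{DefinitionTermOrder}; I would compute that top multidegree, read off the two or three largest monomials, and verify that on one side they all lie in the multiple of $L_{\ep,\zeta,\eta}$, while on the other side the largest lies in the corresponding multiple of $\QQ$ and the next in a single further summand. This is precisely the input required by Remark \ref{RemarkReductionSpair}, and I expect these verifications to be routine bookkeeping with the multigrading, completely parallel to the argument already carried out for $\S(Q,M)$.

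The hard part will be the remaining case, in which $\LM(L_{\ep,\zeta,\eta})$ is the exceptional monomial $\mu_{1,\eta}T_{\ep,\zeta}$ with $\eta>c-d$. Here the two-term syzygies \eqref{S3} and \eqref{S4} no longer have the property that the second largest monomial on the appropriate side sits in a single summand, so I would instead invoke the longer syzygy \eqref{S5}. After the appropriate specialization its left-hand side contains the required multiple of $L_{\ep,\zeta,\eta}$ together with strictly smaller $M$- and $P$-terms, while its right-hand side contains $\mu_{1,\ep}\QQ$ together with strictly smaller $L$- and $MT$-terms. The main obstacle is then to show, by a careful $\mdeg$-computation over all six kinds of summand appearing in \eqref{S5}, that the two largest monomials of the left-hand side occur only in that multiple of $L_{\ep,\zeta,\eta}$, the largest monomial of the right-hand side occurs only in $\mu_{1,\ep}\QQ$, and the second largest occurs only in one further right-hand summand; this is exactly why \eqref{S5} was built with that many terms, and once it is checked, Remark \ref{RemarkReductionSpair} gives the reduction to $0$ and finishes the proof.
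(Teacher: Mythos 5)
Your overall strategy --- split into cases according to which $T$-factor of $\LM(\QQ)=T_{\al,\be}T_{\og,\od}$ is shared with $\LM(L_{\ep,\zeta,\eta})$, then feed a suitable syzygy into Remark \ref{RemarkReductionSpair} --- is exactly the right one, and your description of the S-pairs themselves is correct. The genuine gap is that you have matched the syzygies to the wrong cases, and with your assignment the reductions cannot be carried out. Look at what \eqref{S3} actually contains: on its left-hand side the only $L$-generators are $L_{\overline{\al_{i_3}},\overline{\al_{i_4}},\ep}$, multiplied by the \emph{unbarred} variables $T_{\al_{i_1},\al_{i_2}}$. So \eqref{S3} contains the product $T_{\al,\be}\,L_{\og,\od,\ep}$, which is the term needed to resolve an S-pair whose shared variable is $T_{\og,\od}$; it contains no multiple of $L_{\ep,\al,\be}$ at all, so it cannot produce the decomposition $T_{\og,\od}L_{\ep,\al,\be}\mp\mu_{1,\ep}\QQ=\cdots$ required by Remark \ref{RemarkReductionSpair} in your first case. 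Symmetrically, the $L$-terms of \eqref{S4} are $L_{\overline{\al_{j_1}},\overline{\al_{j_2}},\overline{\al_{j_3}}}T_{\al_{j_4},\ep}$, where \emph{all three} indices of the $L$ are barred $\al_i$'s; the auxiliary index of the $L$ in a general S-pair $\S(\QQ,L_{\ep,\og,\od})$ is an arbitrary $\ep\in[c]$ and need not be of the form $\oa_k$, so \eqref{S4} covers at best a degenerate sub-family (it is the syzygy designed for the $\S(L,M)$ pairs of Proposition \ref{PropositionSpairLM}). The correct matching is the reverse of yours: the long syzygy \eqref{S5} handles the case where the shared factor is $T_{\al,\be}$ (its left-hand side contains $T_{\og,\od}L_{\al,\be,\ep}$ and its right-hand side contains $\mu_{1,\ep}\QQ$), and \eqref{S3} handles the case where the shared factor is $T_{\og,\od}$.

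Two further points. First, your third case is not a separate case: the ``exceptional'' leading monomial $\mu_{1,\eta}T_{\ep,\zeta}$ with $\eta>c-d$ can occur inside either of the two cases above and is absorbed there --- in the $T_{\al,\be}$ case the only fact used is $\mu_{1,\ep}\succ\mu_{1,\al},\mu_{1,\be}$, which holds for both forms of the leading monomial, so no extra syzygy is needed. Second, the case you expect to be ``routine bookkeeping'' is in fact the delicate one: when applying \eqref{S3} to the shared-$T_{\og,\od}$ case, identifying the second highest monomial requires comparing $\mdeg(\mu_{1,\ep})$ with $\ee_{\al_2+d}$, and the three possibilities (less than, greater than, equal) lead to different second-highest monomials, with the equality case $\ep=\oa_2$ resolved only by the final lexicographic tie-break. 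Without redoing the case split with the syzygies correctly assigned and carrying out this sub-analysis, the argument does not go through.
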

\begin{proof}
Let  $\S(Q_{\al_1,\al_2,\al_3,\al_4},L_{\be,\ga,\de})$ be a linear S-pair
with $(\al_1,\al_2,\al_3,\al_4)\in {[c-d]\choose 4},$ $ ({\be,\ga,\de})\in {[c] \choose 3}$,
and let $\ep$ be such that $\mu_{1,\ep}$ divides $\LM(L_{\be,\ga,\de})$,
 so that $\LM(L_{\be,\ga,\de}) = \mu_{1,\ep}T_{\al_1,\al_2}$ or $\LM(L_{\be,\ga,\de}) = \mu_{1,\ep}T_{\oa_3,\oa_4}$. 

Assume first $\LM(L_{\be,\ga,\de}) = \mu_{1,\ep}T_{\al_1,\al_2}$.
Then $\mu_{1,\ep} \succ \mu_{1,\al_1},\mu_{1,\al_2}$ by Proposition \ref{PropLeadingTerms},
and hence $\mdeg(\mu_{1,\ep}) > \mdeg(\mu_{1,\al_1}) > \mdeg(\mu_{2,\al_1}) = \ee_{\al_1+d}$ by Remark \ref{RemarkOrderVariablesMultidegree}.
We consider the syzygy \eqref{S5}.
Comparing $\mdeg$, the highest monomials in either side are those divisible by $\mu_{1,\ep}$.
In particular, the highest monomial is $\mu_{1,\ep}T_{\al_1,\al_2} T_{\oa_3,\oa_4}$, appearing only in $L_{\al_1,\al_2,\ep}T_{\oa_3,\oa_4}$ in the left-hand side and only in $\mu_{1,\ep}Q_{\al_1,\al_2,\al_3,\al_4}$ in the right-hand side; 
the second highest monomial is $\mu_{1,\ep}T_{\al_1,\al_3} T_{\oa_2,\oa_4}$, appearing only in $L_{\al_1,\al_3,\ep}T_{\oa_2,\oa_4}$ in the left-hand side and only in $\mu_{1,\ep}Q_{\al_1,\al_2,\al_3,\al_4}$ in the right-hand side.
By Remark \ref{RemarkReductionSpair}, the S-pair $\S(Q_{\al_1,\al_2,\al_3,\al_4},L_{\be,\ga,\de})$ reduces to 0.

Now assume  $\LM(L_{\be,\ga,\de}) = \mu_{1,\ep}T_{\oa_3,\oa_4}$.
Then $\mu_{1,\ep} \succ \mu_{1,\oa_3},\mu_{1,\oa_4}$ by Proposition \ref{PropLeadingTerms}.
Therefore   $\mdeg(\mu_{1,\ep}) > \mdeg(\mu_{1,\oa_3}) = \mdeg(\mu_{2,\al_3}) = \ee_{\al_3+d}$ and 
 $\mdeg(\mu_{1,\ep}) > \mdeg(\mu_{1,\oa_4}) = \mdeg(\mu_{2,\al_4}) = \ee_{\al_4+d}$
  by Remark \ref{RemarkOrderVariablesMultidegree}.
Observe that we also have $\mdeg(\mu_{1,\ep}) > \mdeg(\mu_{2,\ep}) = \ee_{\ep+d}$.
We consider the syzygy \eqref{S3}, and we are going to use Remark \ref{RemarkReductionSpair}.
Since $\mdeg(\mu_{1,\ep}) >\ee_{\ep+d},  \ee_{\al_3+d},  \ee_{\al_4+d}$, 
the highest monomial in either side of \eqref{S3} is the only one whose $\mdeg$ is equal to
$\mdeg(\mu_{1,\ep})+\ee_{\al_1+d}+\ee_{\al_2+d}+\ee_{\oa_3+d}+\ee_{\oa_4+d}$, 
that is,  $\mu_{1,\ep}T_{\al_1,\al_2}T_{\oa_3,\oa_4}$.
It appears only in $T_{\al_1,\al_2}L_{\be,\ga,\de}$ in the left-hand side, and only in $Q_{\al_1,\al_2,\al_3,\al_4}\mu_{1,\ep}$ in the right-hand side. 

If $\mdeg(\mu_{1,\ep}) < \ee_{\al_2+d}$,  the second highest monomial in either side of \eqref{S3} is the only one  whose $\mdeg$ is equal to
$\ee_{\al_1+d}+\ee_{\al_2+d}+\ee_{\ep+d}+\ee_{\al_3+d}+\ee_{\oa_4+d}$, 
that is,  $T_{\al_1,\al_2}T_{\ep,\oa_4}\mu_{1,\oa_3}$.
It appears only in $T_{\al_1,\al_2}L_{\be,\ga,\de}$ in the left-hand side, and only in $M_{\al_1,\al_2,\al_3}T_{\oa_4,\ep}$ in the right-hand side. 

If $\mdeg(\mu_{1,\ep}) > \ee_{\al_2+d}$,  the second highest monomial in either side of \eqref{S3} is the only one  whose $\mdeg$ is equal to
$\ee_{\al_1+d}+\mdeg(\mu_{1,\ep}) +\ee_{\al_3+d}+\ee_{\oa_2+d}+\ee_{\oa_4+d}$, 
that is,  $T_{\al_1,\al_3}T_{\oa_2,\oa_4}\mu_{1,\ep}$.
It appears only in $T_{\al_1,\al_3}L_{\oa_2,\oa_4,\ep}$ in the left-hand side, and only in $Q_{\al_1,\al_2,\al_3,\al_4}\mu_{1,\ep}$ in the right-hand side. 

Finally, if $\mdeg(\mu_{1,\ep}) = \ee_{\al_2+d}$, then we have $\ep = \oa_2$ by Definition \ref{DefinitionTermOrder}.
In this case, the monomials in the previous two paragraphs have the same $\mdeg$, and also the same $\sdeg$.
However, we have $T_{\al_1,\al_3}T_{\oa_2,\oa_4}\mu_{1,\ep} \succ T_{\al_1,\al_2}T_{\ep,\oa_4}\mu_{1,\oa_3}$,
because $\mu_{1,\ep}$ is the largest of the six variables involved.
Thus, $T_{\al_1,\al_3}T_{\oa_2,\oa_4}\mu_{1,\ep}$ is the second highest monomial in either side of \eqref{S3}.

Using Remark \ref{PropLeadingTerms}, we conclude in all cases  that the S-pair $\S(Q_{\al_1,\al_2,\al_3,\al_4},L_{\be,\ga,\de})$ reduces to 0,
and the proof is completed.
\end{proof}

\begin{prop}\label{PropositionSpairLM}
The S-pairs of type $\S(L,M)$  reduce to 0.
\end{prop}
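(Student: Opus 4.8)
The plan is to follow the template of Propositions \ref{PropositionSpairQM} and \ref{PropositionSpairQL}. Since we already know that $\{\PP,\QQ,\LL,\MM\}$ generates $\mJ$, by Buchberger's criterion it suffices, for each pair of leading monomials that is not coprime, to exhibit the corresponding S-pair as part of one of the syzygies \eqref{S3}, \eqref{S4}, \eqref{S5}, \eqref{S6} and then to invoke Remark \ref{RemarkReductionSpair}, whose hypotheses one verifies by comparing first $\mdeg(\cdot)$, then $\sdeg(\cdot)$, and finally the lexicographic order on the variables.

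First I would classify the non-trivial S-pairs $\S(L_{\al,\be,\ga},M_{\ep,\zeta,\eta})$. By Proposition \ref{PropLeadingTerms} we have $\LM(M_{\ep,\zeta,\eta})=\mu_{2,\zeta}T_{\ep,\eta}$, while by Remark \ref{RemarkLeadingL} either $\LM(L_{\al,\be,\ga})=\mu_{1,\al}T_{\be,\ga}$, or $\LM(L_{\al,\be,\ga})=\mu_{1,\ga}T_{\al,\be}$ with $\ga>c-d$. Using the identity $\mu_{1,\overline{\zeta}}=\mu_{2,\zeta}$, the two leading monomials can fail to be coprime only in the following ways: (a) they share the $T$-variable, i.e. $\{\be,\ga\}=\{\ep,\eta\}$ (respectively $\{\al,\be\}=\{\ep,\eta\}$ in the exceptional case for $\LM(L)$); (b) the first-row variable of $\LM(L)$ equals $\mu_{2,\zeta}$, which forces the relevant index of $L$ to be $\overline{\zeta}$, in particular $\zeta\le c-d$; (c) both of these at once, so that $\LM(L)=\LM(M)$.

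In case (a) the S-pair has the shape ``$L_{\al,\be,\ga}$ times a second-row variable plus $M_{\ep,\zeta,\eta}$ times a first-row variable'', which is precisely the shape of the left- and right-hand sides of \eqref{S6}; I would therefore read the S-pair off \eqref{S6} (or off the larger syzygy \eqref{S5} when several monomials collide under $\mdeg(\cdot)$), with the tuple of indices built from those of $L$ and $M$, so that $L$ and $M$ become the two distinguished terms $F_1$ and $G_1$ of Remark \ref{RemarkReductionSpair}. In case (b) the S-pair instead has the shape ``$L_{\al,\be,\ga}$ times a $T$-variable plus $M_{\ep,\zeta,\eta}$ times a $T$-variable'', matching the shape of \eqref{S3} and \eqref{S4}; I would read it off \eqref{S3} or \eqref{S4} (again, \eqref{S5} in the presence of $\mdeg$-collisions caused by barred indices). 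Finally, in the degenerate case (c) the S-pair reduces, after the obvious cancellation of leading terms, to a combination of a few monomials, each the product of an $x$-variable with a $T$-variable; as it then already lies in the ideal $\mL$, I would display it explicitly as a combination of two further polynomials of the types \eqref{equationL} and \eqref{equationM} and check that the reduction algorithm terminates at $0$.

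As in the two previous propositions, the refinement by $\mdeg(\cdot)$ pins down the leading monomial of every product of generators occurring in the chosen syzygy, so that the verification reduces to checking the three conditions of Remark \ref{RemarkReductionSpair} for the decomposition in which $L$ and $M$ are the distinguished terms. The step I expect to be the main obstacle is the bookkeeping in cases (b) and (c): one must keep track of which of the two possible leading monomials each auxiliary $L$ has --- equivalently, whether one of its indices exceeds $c-d$ --- and, when two monomials agree in both $\mdeg(\cdot)$ and $\sdeg(\cdot)$, argue that the lexicographic comparison is the one that breaks the tie, exactly the kind of subtlety handled ad hoc in the last paragraph of the proof of Proposition \ref{PropositionSpairQL}. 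Once all these inequalities are in place, Buchberger's criterion shows that every S-pair of type $\S(L,M)$ reduces to $0$.
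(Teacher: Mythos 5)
Your plan is essentially the paper's proof: split the nontrivial S-pairs according to whether the two leading monomials share the $T$-variable or the $x$-variable, realize each S-pair inside one of the syzygies (the paper uses \eqref{S6} for the shared-$T$ case and \eqref{S4} for the shared-$x$ case), and conclude via Remark \ref{RemarkReductionSpair} by comparing $\mdeg$, then $\sdeg$, then the lexicographic order. Two places in your outline still require genuine work, and one of them hides the only nonobvious idea in the argument. In your case (b) you record only that the index of $L_{\al,\be,\ga}$ carrying the leading $x$-variable is of the form $\overline{\zeta}$; but to instantiate \eqref{S4} one needs \emph{all three} indices of $L$ to be barred, i.e.\ $\{\al,\be,\ga\}=\{\overline{\zeta},\overline{\iota},\overline{\theta}\}$ with $\zeta<\iota,\theta\leq c-d$. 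This holds because $\mu_{2,\zeta}=\mu_{1,\overline{\zeta}}$ is a second-row entry and dominates $\mu_{1,\be},\mu_{1,\ga}$ by Proposition \ref{PropLeadingTerms}, so neither of the latter can be an $x_{i,0}$ and each is therefore $\mu_{2,(\cdot)}$ of an earlier column of the same block. Without this observation the syzygy you intend to quote is not even defined, so you should state and prove it rather than file it under bookkeeping.

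Your case (c) is in fact vacuous. If $\LM(L_{\al,\be,\ga})=\LM(M_{\de,\zeta,\ep})$, then either $\al=\overline{\zeta}$ with $\be=\de<\zeta$, forcing $\overline{\zeta}<\zeta$, which is impossible since the next column of a block always occurs later in $\mat$; or $\ga=\overline{\zeta}$ with $\al=\de<\zeta\leq c-d$, and then Remark \ref{RemarkFirstRow} gives $\mu_{1,\al}\succ\mu_{1,\zeta}\succ\mu_{1,\overline{\zeta}}=\mu_{1,\ga}$, contradicting $\LM(L_{\al,\be,\ga})=\mu_{1,\ga}T_{\al,\be}$. This is fortunate, because your proposed treatment of (c) --- the difference lies in $\mL$, so write it as a combination of generators --- is not by itself a reduction to $0$ in Buchberger's sense: one would still have to exhibit a standard representation whose summands have leading monomials bounded by that of the S-pair. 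Either prove the case empty, as above, or replace the sketch with an actual reduction.
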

\begin{proof}
An S-pair $\S(\LL,M_{\de,\zeta,\ep})$,  where  $(\al,\be,\ga),(\de,\zeta,\ep)\in{[c] \choose 3}$, 
must have bidegree equal to $(1,2)$ or $(2,1)$.

Suppose the S-pair has bidegree $(1,2)$,
then  $ \gcd \big(\LM(\LL),\LM(M_{\de,\zeta,\ep})\big)=\mu_{2,\zeta} $, and
by Remark \ref{RemarkLeadingL} either $\overline{\zeta} = \al$ or $\overline{\zeta} = \ga$.
If   $\overline{\zeta} = \al$ then $\mu_{2,\zeta}=\mu_{1,\al}\succ\mu_{1,\be},\mu_{1,\ga}$. 
It follows by Proposition \ref{PropLeadingTerms} that $ \mu_{1,\be},\mu_{1,\ga}$ cannot be of the form $x_{i,0}$ for any $i$, equivalently,
there exist $\iota, \theta$ such that $\be = \overline{\theta}, \ga = \overline{\iota}$;
similarly if  $\overline{\zeta} = \ga$.
In any case we get $\{\al,\be,\ga\} = \{\overline{\zeta},  \overline{\iota},\overline{\theta}\}$ for some $\iota, \theta\leq c-d$.
Observe that $\zeta< \theta, \iota$ since  $ \mu_{2, \zeta}\succ  \mu_{2,	\iota},\mu_{2,\theta}$.
We consider the syzygy \eqref{S4} for indices  $\de < \zeta < \theta, \iota \leq c-d$ and $\ep$.
The highest monomials in either side are those whose $\mdeg$ is equal to 
$\ee_{\ep+d}+\ee_{\de+d}+\ee_{\zeta+d}+\ee_{\overline{\theta}+d}+\ee_{\overline{\iota}+d}$,
and they are $T_{\de,\ep}T_{\overline{\theta},\overline{\iota}}\mu_{2,\zeta} \succ T_{\de,\zeta}T_{\overline{\theta},\overline{\iota}}\mu_{2,\ep} \succ T_{\zeta,\ep}T_{\overline{\theta},\overline{\iota}}\mu_{2,\de}$.
On the left-hand side of \eqref{S4} they appear only in $ T_{\overline{\theta},\overline{\iota}} M_{\de,\zeta,\ep}$,
whereas on the right-hand side $T_{\de,\ep}T_{\overline{\theta},\overline{\iota}}\mu_{2,\zeta} $ appears only in $T_{\de,\ep}\LL$ and 
$ T_{\de,\zeta}T_{\overline{\theta},\overline{\iota}}\mu_{2,\ep}$ appears only in $Q_{\de,\zeta,\theta,\iota}\mu_{2,\ep}$.
By Remark \ref{RemarkReductionSpair}, the S-pair reduces to 0.

Now suppose the S-pair  has bidegree $(2,1)$.
If $\LM(\LL) = \mu_{1,\ga} T_{\al,\be}$ then  $\{\de,\ep\}=\{\al,\be\}$ and $\mu_{1,\ga}\succ \mu_{1,\al},\mu_{1,\be}$ by Proposition \ref{PropLeadingTerms}.
Since  $\al = \de <\zeta<\ep = \be$ it follows   by Remark \ref{RemarkFirstRow} that  
 $ \mu_{1,\zeta} \prec \mu_{1, \al}$ or $\mu_{1,\zeta} \prec \mu_{1, \be}$; 
 in either case we conclude 
$\mu_{1,\ga}\succ \mu_{1,\zeta}$.
Therefore $\mdeg(\mu_{1,\ga}) > \mdeg(\mu_{1,\al}), \mdeg(\mu_{1,\be}), \mdeg(\mu_{1,\zeta})$ by Remark \ref{RemarkOrderVariablesMultidegree}. 
We consider the syzygy \eqref{S6} for indices $\al < \zeta< \be<\ga$.
In either side, the highest monomials are
those divisible by $\mu_{1,\ga}$, namely
$\mu_{1,\ga} \mu_{2,\zeta} T_{\al,\be} \succ \mu_{1,\ga} \mu_{2,\be} T_{\al,\zeta} \succ\mu_{1,\ga} \mu_{2,\al} T_{\zeta,\be}$.
In the right-hand side, they appear only in $\mu_{1,\ga} M_{\de,\zeta,\ep}$, whereas in the left-hand side  
 $\mu_{1,\ga} \mu_{2,\zeta} T_{\al,\be}$ appears only in $\mu_{2,\zeta}\LL$ and 
 $ \mu_{1,\ga} \mu_{2,\be} T_{\al,\zeta}$ appears only in $\mu_{2,\be}L_{\al,\zeta,\ga}$.
By Remark \ref{RemarkReductionSpair}, the S-pair reduces to 0.
 If $\LM(\LL) = \mu_{1,\al} T_{\be,\ga}$ then  $\{\de,\ep\}=\{\be, \ga\}$ and $\mu_{1,\al}\succ \mu_{1,\be},\mu_{1,\ga}$, and we
proceed in the same way  using the syzygy  \eqref{S6} for indices  $\al< \be< \zeta< \ga$.
\end{proof}

The following proposition circumvents the explicit reduction of the other S-pairs.
It is based on the  observation that most S-pairs involve minors from at most 5 columns; 
this allows to  identify several small Gr\"obner bases in $\mJ$, 
and to reduce the desired statement to a Hilbert series equation to be checked in a finite number of cases.
Note that the hypothesis on the ground field is not restrictive for our aim, 
as we will prove Theorem \ref{TheoremGrobnerRees} for any $\mathbb{K}$.

\begin{prop}\label{PropositionBypassSpairs}
Assume $\ch(\mathbb{K})=0$.
The S-pairs of types $\S(P,P)$, $\S(P,Q)$, $\S(Q,Q)$, $\S(P,M)$, $\S(P,L)$, $\S(M,M)$, $\S(L,L)$ reduce to 0.
\end{prop}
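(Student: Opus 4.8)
The plan is to bound the number of columns (indices in $[c]$) involved in each S-pair of the seven listed types, and thereby reduce the whole statement to finitely many instances that can be settled by a Hilbert-series comparison. First I would observe that every generator $\PP,\QQ,\LL,\MM$ involves at most four of the columns $1,\ldots,c$ (counting an index $\al$ and its partner $\oa$ as the same ``column'' of $\mat$ in the sense of Notation \ref{NotationNextIndex}), so any S-pair of the types $\S(P,P)$, $\S(P,Q)$, $\S(Q,Q)$, $\S(P,M)$, $\S(P,L)$, $\S(M,M)$, $\S(L,L)$ involves at most $4+4=8$ columns once we account for the overlap forced by a nonempty $\gcd$ of leading monomials; a short case analysis using Proposition \ref{PropLeadingTerms} and Remark \ref{RemarkLeadingL} should in fact cut this down further, but $8$ suffices. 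The key point is that the reduction of such an S-pair to $0$ modulo the generating set of $\mJ$ only ever uses generators supported on these same columns. Hence it is enough to verify Buchberger's criterion for these seven types in the universal situation $\mathbf{n} = (1,1,\ldots,1)$ with $d = c$ columns, for all $c \le 16$ (or whatever explicit bound the column count gives); here the scroll degenerates and $\mat$ is a generic $2\times c$ matrix with distinct entries, so $\mJ$ is generated by the $\PP$'s and $\LL,\MM$'s alone (the $\QQ$'s vanish), which is a finite, well-understood list.

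Next I would make the ``it suffices'' step precise via a Hilbert-series argument, in the spirit of the passage already used in the proof of Theorem \ref{TheoremGrobnerFiber}. Let $G$ denote the full generating set of $\mJ$ from Theorem \ref{TheoremGeneratorsRees} and let $\iin_\prec(G)$ be the monomial ideal generated by the leading terms in Proposition \ref{PropLeadingTerms}. One always has $(\iin_\prec(G)) \subseteq \iin_\prec(\mJ)$, hence a coefficientwise inequality of Hilbert series
$$
\HS_{\SR/(\iin_\prec(G))}(s,u) \succeq \HS_{\SR/\iin_\prec(\mJ)}(s,u) = \HS_{\RI}(s,u),
$$
and $G$ is a Gr\"obner basis if and only if equality holds. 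Now $\HS_{\RI}$ depends only on $c$ and $d$ by \cite[Theorem 3.7]{BCV} together with the fiber-type property, and the left-hand side is the Hilbert series of an explicit monomial ideal that again depends only on $c,d$ (it is governed by the simplicial-complex combinatorics of $\Delta$ together with the linear-type part). So the equality of the two series is a statement about the pair $(c,d)$ only, and the reduction to at most $8$ columns means it is forced by its validity in the balanced cases, where it follows from \cite{CHV}; the remaining bounded-size cases are checked by computer with $\ch(\mathbb{K})=0$. Buchberger's criterion then gives that every S-pair reduces to $0$; since the S-pairs of types $\S(Q,M)$, $\S(Q,L)$, $\S(L,M)$ have already been handled by hand in Propositions \ref{PropositionSpairQM}, \ref{PropositionSpairQL}, \ref{PropositionSpairLM}, the Hilbert-series equality pins down precisely the seven remaining types, which is the assertion.

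The main obstacle, I expect, is making the column-count bound genuinely rigorous: one must show not only that the two leading-term-bearing generators of an S-pair live on a bounded set of columns, but that the entire standard-expression certificate of reduction can be taken supported on that same set. This is where I would be careful — I would argue that $\SR$, $\mJ$, and $\prec$ are all ``built from columns'' in a way compatible with restriction, i.e. that deleting all variables $x_{i,j}, T_{\al,\be}$ outside a fixed set of columns is a retraction sending $\mJ$ onto the corresponding $\mJ'$ of a smaller scroll and commuting with $\iin_\prec$; then an S-pair reduces to $0$ in $\SR$ iff its image does in the smaller ring, and the latter is one of finitely many checks. A secondary subtlety is that in the degenerate case $\mathbf{n}=(1,\ldots,1)$ the partner map $\al\mapsto\oa$ is undefined (there are no $\QQ$'s), so the ``at most $8$ columns'' reduction must land in scrolls with $c-d \ge 1$ as well; one keeps both a $\mathbf{n}=(1,\ldots,1,2)$-type family and balanced families in the finite verification. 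Once the retraction lemma is in place, the computer calculation is routine and the rest is bookkeeping with Hilbert series exactly as in Section \ref{SectionFiber}.
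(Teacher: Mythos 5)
Your overall strategy --- bound the number of columns an S-pair can touch, then settle finitely many cases by a Hilbert-series computation --- is indeed the paper's strategy for the types $\S(P,M)$, $\S(P,L)$, $\S(M,M)$, $\S(L,L)$, but your write-up has gaps at exactly the points you flag as delicate, and it misses the one observation that makes the remaining types tractable. First: the types $\S(P,P)$, $\S(P,Q)$, $\S(Q,Q)$ require no new computation at all. These S-pairs live in $\SF$, and Theorem \ref{TheoremGrobnerFiber} already says the $\PP$'s and $\QQ$'s form a Gr\"obner basis of $\mK$; Buchberger's criterion (in the converse direction) then gives their reduction to $0$. Without this shortcut your finite verification must handle the $\QQ$'s, and there the column-count reduction breaks down: the support of $\QQ$ involves the partner map $\al\mapsto\oa$, whose structure on a given set of columns depends on $\mathbf{n}$ in a way that is not captured by your two proposed ``universal'' families, so the check is not visibly finite and uniform over all scrolls.

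Second, the middle Hilbert-series paragraph is not a valid argument. The monomial ideal $(\iin_\prec(G))$ is \emph{not} determined by $(c,d)$ alone: by Remark \ref{RemarkLeadingL} the leading term of $\LL$ is $\mu_{1,\al}T_{\be,\ga}$ or $\mu_{1,\ga}T_{\al,\be}$ according to the ordering of first-row entries, which depends on $\mathbf{n}$; asserting that its Hilbert series nevertheless depends only on $(c,d)$ is essentially the theorem you are trying to prove. And the step ``the reduction to at most $8$ columns means the global Hilbert-series equality is forced by its validity in the balanced cases'' is a non sequitur --- a local bound on S-pair supports does not transport a global Hilbert-series identity between different scrolls. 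Finally, the retraction lemma you defer is false as stated: deleting columns of $\mat$ does not produce the matrix of a smaller scroll (adjacent columns share variables via $\mu_{2,\al}=\mu_{1,\oa}$). The paper avoids all of this: a non-trivial S-pair of type $\S(P,M)$, $\S(P,L)$, $\S(M,M)$, or $\S(L,L)$ involves at most $5$ column indices $\Gamma$, and the relevant generators $\{P_\ba, M_\bb\}$ (resp. $\{P_\ba, L_\bb\}$) supported on $\Gamma$ involve only the \emph{distinct} second-row (resp. first-row) entries of those columns, so after renaming they sit in a small polynomial ring $A$ where one checks directly --- for each of the finitely many admissible restrictions of $\prec$, namely $1+1+8+16=34$ instances --- that the set is a Gr\"obner basis by comparing $\HS(A/(\mathcal{G}_m))$ with the Hilbert series of the corresponding monomial ideal. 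That stronger local claim (the restricted sets are Gr\"obner bases of the ideals they generate) is what implies the S-pair reductions; no restriction map on $\mJ$ itself is ever needed.
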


\begin{proof}
The fact that  S-pairs of types $\S(P,P)$, $\S(P,Q)$, $\S(Q,Q)$ reduce to 0 follows from Theorem \ref{TheoremGrobnerFiber} by Buchberger's Criterion.
For any non-trivial S-pair  of the form
$\S(\PP,M_{\ep,\zeta,\eta})$,
$ \S(\PP,L_{\ep,\zeta,\eta})$, or
$ \S(\MM,M_{\de,\ep,\zeta})$, 
$ \S(\LL,L_{\de,\ep,\zeta})$,
we  have $\Card\{\al,\be,\ga,\de,\ep,\zeta,\eta\}\in\{4,5\}$,
respectively, $\Card\{\al,\be,\ga,\de,\ep,\zeta\}\in\{4,5\}$.
By Buchberger's Criterion, the statement of the proposition is implied by the following stronger claim:
for any  $\Gamma \subseteq [c]$ with $\Card(\Gamma)\in\{4,5\}$,
 the two  subsets of $\mJ$
$$
\mathcal{G}_{M,\Gamma}= \left\{P_\ba,M_\bb\,|\,\ba\in{\Gamma \choose 4},\bb\in{\Gamma \choose 3}\right\}, \quad
\mathcal{G}_{L,\Gamma}=\left\{P_\ba,L_\bb\,|\,\ba\in{\Gamma \choose 4},\bb\in{\Gamma \choose 3}\right\}
$$
are Gr\"obner bases.

In order to prove the claim, let $m= \Card(\Gamma)\in\{4,5\}$.
After renaming the variables that appear in  $\mathcal{G}_{M,\Gamma}$,  we can rewrite the set $\mathcal{G}_{M,\Gamma}$ as
\begin{align*}
\mathcal{G}_{m}=& \quad \left\{ T_{\al,\be}T_{\ga,\de} - T_{\al,\ga}T_{\be,\de} + T_{\al,\de}T_{\be,\ga}\, \Big|\, (\al,\be,\ga,\de)\in{[m] \choose 4} \right \}\\
& \cup \left\{   x_\al T_{\be,\ga}-x_\be T_{\al,\ga}+x_\ga T_{\al,\be} \, \Big|\, (\al,\be,\ga)\in{[m] \choose 3} \right \}
\end{align*}
and by Proposition \ref{PropLeadingTerms} the set of leading monomials of the elements of $\mathcal{G}_{M,\Gamma}$ becomes
\begin{align*}
\mathcal{M}_{m}=& \quad \left\{  T_{\al,\ga}T_{\be,\de} \, \Big|\, (\al,\be,\ga,\de)\in{[m] \choose 4} \right \} \cup \left\{ x_\be T_{\al,\ga} \, \Big|\, (\al,\be,\ga)\in{[m] \choose 3} \right \}.
\end{align*}
Thus $\Card(\mathcal{G}_{m})=\Card(\mathcal{M}_{m})=5$  if $m= 4 $,  while $\Card(\mathcal{G}_{m})=\Card(\mathcal{M}_{m})=15$ if $m=5$.
The statement  that  $\mathcal{G}_{M,\Gamma}$ is a Gr\"obner basis amounts to the equation of Hilbert series
\begin{equation}\label{EquationHilbertSeriesM}
\HS\left( \frac{A}{(\mathcal{G}_{m})}\right) =  \HS\left( \frac{A}{(\mathcal{M}_{m})}\right) 
\end{equation}
where $A = \mathbb{K}[x_1, \ldots, x_m, T_{1,2},T_{1,3},\ldots,T_{m-1,m}]$, and $m\in \{4,5\}$.

Now we consider $\mathcal{G}_{L,\Gamma}$.
After renaming the variables that appear in  $\mathcal{G}_{L,\Gamma}$,  we can rewrite the set $\mathcal{G}_{L,\Gamma}$ also as $\mathcal{G}_{m}$ above.
On the other hand,
by Proposition \ref{PropLeadingTerms}, the set of leading monomials of the elements of $\mathcal{G}_{L,\Gamma}$ becomes
\begin{align*}
\mathcal{L}_{m}=& \quad \left\{  T_{\al,\ga}T_{\be,\de} \, \Big|\, (\al,\be,\ga,\de)\in{[m] \choose 4} \right \}\\
 &\cup \left\{ x_{\al_1} T_{\al_2,\al_3} \, \Big|\, 	\al_i\in [m],\, \al_1\ne\al_2\ne\al_3\ne\al_1, \mbox{ and }   x_{\al_1}\succ x_{\al_2}, x_{\al_3} \right \}.
\end{align*}
In other words, the leading monomial of each $x_\al T_{\be,\ga}-x_\be T_{\al,\ga}+x_\ga T_{\al,\be}$ is the monomial  with largest $x$ variable  with respect to $\prec$.
By Remark \ref{RemarkFirstRow} the  order $\prec$ on  $x_1, \ldots, x_m$ is such that,
for any $(\al,\be,\ga)\in {[m] \choose 3}$,
if $x_\be\succ x_\ga$ then $x_\al\succ x_\be$.
There are 8 such total orders for $m=4$ 
\begin{align*}
 x_1\! \succ\! x_2 \!\succ\! x_3\! \succ\! x_4,\quad &&  x_1\! \succ\! x_2 \!\succ\! x_4\! \succ\! x_3,\quad 
&& x_1\! \succ\! x_4\! \succ\! x_2\! \succ\! x_3,\quad && x_1\! \succ\! x_4\! \succ\! x_3\! \succ\! x_2,\\
  x_4 \!\succ\! x_1\! \succ\! x_2\! \succ\! x_3,\quad &&  x_4 \!\succ\! x_1\! \succ\! x_3\! \succ\! x_2,\quad
&&  x_4 \!\succ\! x_3\! \succ\! x_1\! \succ\! x_2,\quad &&  x_4 \!\succ\! x_3\! \succ\! x_2\! \succ\! x_1, 
\end{align*}
and 16  such total orders for $m=5$
\begin{align*}
&& x_1\! \succ\! x_2 \!\succ\! x_3\! \succ\! x_4\! \succ\! x_5,\quad && x_1\! \succ\! x_2 \!\succ\! x_3\! \succ\! x_5\! \succ\! x_4,\quad 
&& x_1\! \succ\! x_2 \!\succ\! x_5\! \succ\! x_3\! \succ\! x_4,\quad\\
&& x_1\! \succ\! x_2 \!\succ\! x_5\! \succ\! x_4\! \succ\! x_3,\quad && x_1\! \succ\! x_5 \!\succ\! x_2\! \succ\! x_3\! \succ\! x_4,\quad 
&& x_1\! \succ\! x_5 \!\succ\! x_2\! \succ\! x_4\! \succ\! x_3,\quad\\
&& x_1\! \succ\! x_ 5\!\succ\! x_4\! \succ\! x_2\! \succ\! x_3,\quad && x_1\! \succ\! x_5\!\succ\! x_4\! \succ\! x_3\! \succ\! x_2,\quad 
&& x_5\! \succ\! x_1 \!\succ\! x_2\! \succ\! x_3\! \succ\! x_4,\quad\\
&& x_5\! \succ\! x_1 \!\succ\! x_2\! \succ\! x_4\! \succ\! x_3,\quad && x_5\! \succ\! x_1 \!\succ\! x_4\! \succ\! x_2\! \succ\! x_3,\quad 
&& x_5\! \succ\! x_1 \!\succ\! x_4\! \succ\! x_3\! \succ\! x_2,\quad\\
&& x_5\! \succ\! x_4 \!\succ\! x_1\! \succ\! x_2\! \succ\! x_3,\quad && x_5\! \succ\! x_4 \!\succ\! x_1\! \succ\! x_3\! \succ\! x_2,\quad 
&& x_5\! \succ\! x_4 \!\succ\! x_3\! \succ\! x_1\! \succ\! x_2,\quad\\
&& x_5\! \succ\! x_4 \!\succ\! x_3\! \succ\! x_2\! \succ\! x_1. \quad&&  
&& 
\end{align*}
As before, 
we have $\Card(\mathcal{L}_{m})=5$  if $m= 4 $ and $\Card(\mathcal{L}_{m})=15$ if $m=5$, 
and  
the statement  that  $\mathcal{G}_{L,\Gamma}$ is a Gr\"obner basis is equivalent  to the equation of Hilbert series
\begin{equation}\label{EquationHilbertSeriesL}
\HS\left( \frac{A}{(\mathcal{G}_{m})}\right) =  \HS\left( \frac{A}{(\mathcal{L}_{m})}\right). 
\end{equation}

With the aid of the computer algebra system \texttt{Macaulay2} \cite{M2},
when $\mathbb{K}=\mathbb{Q}$,
we have verified that  the  $1+1+8+16 = 34$ monomial ideals $(\mathcal{M}_m)$ and $(\mathcal{L}_m)$ have the same Hilbert series as $(\mathcal{G}_m)$ for $m=4,5$;
in other words, 
we have verified  the validity of equations  \eqref{EquationHilbertSeriesM} and \eqref{EquationHilbertSeriesL}  in all  $34$ instances. 
 By flatness, \eqref{EquationHilbertSeriesM} and \eqref{EquationHilbertSeriesL}  hold for any field extension $\mathbb{Q}\subseteq \mathbb{K}$.
\end{proof}

We remark that it is also possible to prove Proposition \ref{PropositionBypassSpairs} with similar techniques as Propositions \ref{PropositionSpairQM}, \ref{PropositionSpairQL}, \ref{PropositionSpairLM}.

We  combine the results obtained so far to  prove the main theorem  of this section.

\begin{thm}\label{TheoremGrobnerRees}
The minimal generators of the defining ideal $\mJ$ of the Rees ring of a rational normal scroll
from a squarefree quadratic Gr\"obner basis with respect to the term order $\prec$.
\end{thm}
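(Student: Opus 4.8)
The plan is to invoke Buchberger's Criterion: since Theorem \ref{TheoremGeneratorsRees} already tells us that the polynomials $\PP, \QQ, \LL, \MM$ generate $\mJ$, it suffices to show that every S-pair built from two of these generators reduces to $0$ modulo the whole set. There are $\binom{4+1}{2}=10$ types of S-pairs to consider, and the work has already been split: Propositions \ref{PropositionSpairQM}, \ref{PropositionSpairQL}, \ref{PropositionSpairLM} dispose of the types $\S(Q,M)$, $\S(Q,L)$, $\S(L,M)$ by producing explicit syzygies \eqref{S1}--\eqref{S6} and applying Remark \ref{RemarkReductionSpair}, while Proposition \ref{PropositionBypassSpairs} handles the remaining seven types $\S(P,P)$, $\S(P,Q)$, $\S(Q,Q)$, $\S(P,M)$, $\S(P,L)$, $\S(M,M)$, $\S(L,L)$ under the hypothesis $\ch(\mathbb{K})=0$. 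So the proof of Theorem \ref{TheoremGrobnerRees} is essentially an assembly of these pieces, plus the step of removing the characteristic-zero restriction.

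First I would state that the minimal generating set is $\{P_\ba, Q_\bb, L_\bc, M_\bc\}$ by Theorem \ref{TheoremGeneratorsRees}, that the leading monomials are as described in Proposition \ref{PropLeadingTerms} (in particular all squarefree and quadratic), and that by Buchberger's Criterion it remains to check that all S-pairs reduce to $0$. I would then observe that the ten types fall into the three treated directly in Propositions \ref{PropositionSpairQM}--\ref{PropositionSpairLM} and the seven treated in Proposition \ref{PropositionBypassSpairs}. This immediately yields the theorem when $\ch(\mathbb{K})=0$. For arbitrary $\mathbb{K}$, the key point is that being a Gr\"obner basis is a condition on the leading terms, and the leading terms of $\PP, \QQ, \LL, \MM$ (computed in Proposition \ref{PropLeadingTerms}) do not depend on the characteristic, nor do the coefficients of the generators $P,Q,L,M$ themselves, which all lie in $\mathbb{Z}$. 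A standard semicontinuity/flatness argument over $\mathbb{Z}$ then transfers the equality $\iin_\prec(\mJ) = (\LM(P_\ba), \LM(Q_\bb), \LM(L_\bc), \LM(M_\bc))$ from $\mathbb{K}=\mathbb{Q}$ to every field: one shows the Hilbert function of $\SR/\iin_\prec(\mJ)$ is independent of $\mathbb{K}$ because both $\SR/\mJ$ and $\SR/(\text{leading monomials})$ have $\mathbb{K}$-independent Hilbert functions (the former by \cite[Theorem 3.7]{BCV} together with Theorem \ref{TheoremGrobnerFiber}, the latter because it is defined by a fixed monomial ideal), so the containment of the monomial ideal in the initial ideal, which is automatic, must be an equality.

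The only genuinely delicate point is the characteristic-independence step at the end, and there are two clean ways to run it. One is the flatness argument just sketched, using that the Hilbert function of $\RI$ — equivalently of $\SR/\mJ$ — depends only on $c,d$ by \cite{BCV} (the fiber type property bounds $\mJ$ by the symmetric algebra relations together with $\mK$, and the Hilbert function of $\FIg$ is $\mathbb{K}$-independent by \cite[Theorem 3.7]{BCV}), so $\dim$ and multiplicity of $\SR/\iin_\prec(\mJ)$ are pinned down and force the equality with the fixed monomial ideal, exactly as in the proof of Theorem \ref{TheoremGrobnerFiber}. The other is to note that Propositions \ref{PropositionSpairQM}, \ref{PropositionSpairQL}, \ref{PropositionSpairLM} are already characteristic-free (the syzygies \eqref{S1}--\eqref{S6} and Remark \ref{RemarkReductionSpair} make no use of $\ch(\mathbb{K})$), so only the seven types from Proposition \ref{PropositionBypassSpairs} need the transfer; and as remarked after that proposition, those seven can alternatively be handled by explicit characteristic-free syzygies of the same flavor. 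I would present the flatness argument as the main line, since it is shortest, and thereby conclude that the generators form a squarefree quadratic Gr\"obner basis over every field $\mathbb{K}$.
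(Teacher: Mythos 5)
Your proposal is correct and follows essentially the same route as the paper: Buchberger's Criterion via Propositions \ref{PropositionSpairQM}--\ref{PropositionBypassSpairs} settles the case $\ch(\mathbb{K})=0$, and the passage to arbitrary characteristic is exactly the paper's argument, namely that the Hilbert series of the fixed monomial ideal and of $\mJ$ (the latter by \cite[Theorem 3.7]{BCV}) are both independent of $\mathbb{K}$, so the automatic containment of the monomial ideal in $\iin_\prec(\mJ)$ must be an equality. The extra detail you supply on why the Hilbert series of $\mJ$ is $\mathbb{K}$-independent is a harmless elaboration of the paper's citation.
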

\begin{proof}
Observe that all the leading monomials are squarefree.
Assume first that  $\ch(\mathbb{K})=0$. 
Then the conclusion follows from Buchberger's Criterion, since all the S-pairs  reduce to 0.
Equivalently,
the Hilbert series of the ideal generated by the leading monomials of the generators equals the one of $\mJ$.
However, the Hilbert series of a monomial ideal does not depend on $\mathbb{K}$,
while it follows from \cite[Theorem 3.7]{BCV} that the Hilbert series of $\mJ$ does not depend on  $\mathbb{K}$ either.
Thus the statement holds for any ground field.
\end{proof}

\begin{cor}
The Rees ring of a rational normal scroll is a Koszul algebra.
\end{cor}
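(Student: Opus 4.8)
The plan is to deduce this directly from Theorem~\ref{TheoremGrobnerRees}, in exactly the way Corollary~\ref{CorollarySpecialFiberKoszul} follows from Theorem~\ref{TheoremGrobnerFiber}, once one small bookkeeping point is settled. First I would collapse the bigrading on $\SR = R[T_{\al,\be}]$ to the total degree, declaring every $x_{i,j}$ and every $T_{\al,\be}$ to have degree $1$. With respect to this grading $\SR$ is a standard graded polynomial ring, the presentation $\PiR : \SR \twoheadrightarrow \RI$ is a standard graded presentation of $\RI$, and each defining equation of $\mJ$ is homogeneous of degree $2$: indeed \eqref{equationP} and \eqref{equationQ} have bidegree $(0,2)$ while \eqref{equationL} and \eqref{equationM} have bidegree $(1,1)$, all of total degree $2$.

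Then I would invoke Theorem~\ref{TheoremGrobnerRees}, which asserts that these quadrics form a Gr\"obner basis of $\mJ$ with respect to $\prec$. Since the leading monomial of a polynomial that is homogeneous of total degree $2$ is itself a monomial of degree $2$, the initial ideal $\iin_\prec(\mJ)$ is generated by quadratic monomials. Therefore $\RI = \SR/\mJ$ is a standard graded algebra presented by a Gr\"obner basis of quadrics, and by the criterion recalled just before Corollary~\ref{CorollarySpecialFiberKoszul} (cf.\ \cite{Co}) it is Koszul. I do not expect any genuine obstacle here: the whole content of the statement is carried by Theorem~\ref{TheoremGrobnerRees}, and the only thing that needs a moment's care is the harmless observation that it is the total grading, rather than the finer bigrading, that is relevant for Koszulness, together with the fact that all the generating relations \eqref{equationL}, \eqref{equationM}, \eqref{equationP}, \eqref{equationQ} are quadratic for it.
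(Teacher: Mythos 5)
Your proposal is correct and is exactly the argument the paper intends: the corollary is an immediate consequence of Theorem~\ref{TheoremGrobnerRees} via the standard fact, recalled before Corollary~\ref{CorollarySpecialFiberKoszul}, that an algebra presented by a Gr\"obner basis of quadrics is Koszul. Your remark that one should pass from the bigrading to the total grading (under which all of \eqref{equationL}, \eqref{equationM}, \eqref{equationP}, \eqref{equationQ} are quadrics) is the right, and only, bookkeeping point.
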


\begin{remark}
Let $\mathcal{S}_{n_1, \ldots, n_d}\subseteq \mathbb{P}^{c+d-1}$ be a rational normal scroll and  $I=(\mathbf{g})$ its homogeneous ideal.
The Main Theorem implies that,
expressing  $\RI$ and $\FIg$ as quotients of $\SR=\mathbb{K}[x_1, \ldots,x_{c+d}, T_1, \ldots, T_{c-1\choose 2}]$ and $\SF=\mathbb{K}[T_1, \ldots, T_{c-1\choose 2}]$ respectively, 
their first graded Betti numbers 
$$
\dim_\mathbb{K} \Tor_1^{\SR}(\RI,\mathbb{K})_j
\quad \text{and} \quad
\dim_\mathbb{K} \Tor_1^{\SF}(\FIg,\mathbb{K})_j
$$
depend only on the dimension and codimension of the scroll, 
but not on the  partition $n_1, \ldots, n_d$.
It would be interesting to know whether the same is true for the higher Betti numbers of the blowup algebras of $\mathcal{S}_{n_1, \ldots, n_d}\subseteq \mathbb{P}^{c+d-1}$.
\end{remark}

\begin{example}[The Veronese surface]\label{ExampleVeronese}
It is interesting to study the implicitization problem for all nondegenerate projective varieties $V \subseteq \mathbb{P}^N$ of minimal degree.
By the classification of Del Pezzo and Bertini  \cite{EH},
$V$ is a cone over a smooth such variety, 
and smooth varieties of minimal degree are precisely the rational normal scrolls, the smooth quadric hypersurfaces, and the Veronese surface $\nu_2(\mathbb{P}^2)\subseteq \mathbb{P}^5$.

We have shown that the Rees ring and the special fiber  of any rational normal scroll are Koszul algebras presented by squarefree Gr\"obner bases of quadrics.
The same result holds for quadric hypersurfaces, trivially:
both blowup algebras are polynomial rings.
The homogeneous  ideal  $I\subseteq \mathbb{K}[x_0, \ldots, x_5]$  of the Veronese surface is generated by the 2-minors of the generic symmetric matrix
$$
\left(
\begin{matrix}
x_{0} & x_{1}  &  x_{2} \\
x_{1} & x_{3}  &  x_{4} \\
x_{2} & x_{4}  &  x_{5} 
\end{matrix}
\right).
$$
It is an ideal of linear type, so the special fiber is a polynomial ring, 
and $\RI$ is isomorphic to the symmetric algebra of $I$ and thus it is defined by quadrics.
However, it turns out that  $\RI$ is not a Koszul algebra:
if  $\mathbb{K}$ has characteristic 0 then we have
$ \dim_\mathbb{K}\Tor_6^{\RI}(\mathbb{K},\mathbb{K})_7 = 32 $.
This confirms the fact that the Veronese surface exhibits an exceptional behavior in various contexts,
see for instance \cite[Section  3.4]{Ru}, \cite[Example 3.5]{Co}.
It also gives the first known example of a prime ideal with linear powers \cite{BCV} whose Rees ring is not Koszul.
\end{example}

\subsection*{Acknowledgments}
This work was supported by a fellowship of the Purdue Research Foundation, 
while the author was visiting the University of Genoa in 2016, 
and by Grant No. 1440140 of the National Science Foundation, while he was a Postdoctoral Fellow at the Mathematical Sciences Research Institute in Berkeley in 2018.

The author would like to thank the Department of Mathematics in Genoa for the warm hospitality, and  especially Aldo Conca for several valuable discussions.
The author also thanks the referee for  carefully reading  the manuscript and for helpful comments that improved its clarity.

\end{document}